\begin{document}

\newcommand{\thlabel}[1]{\label{th:#1}}
\newcommand{\thref}[1]{Theorem~\ref{th:#1}}
\newcommand{\selabel}[1]{\label{se:#1}}
\newcommand{\seref}[1]{Section~\ref{se:#1}}
\newcommand{\lelabel}[1]{\label{le:#1}}
\newcommand{\leref}[1]{Lemma~\ref{le:#1}}
\newcommand{\prlabel}[1]{\label{pr:#1}}
\newcommand{\prref}[1]{Proposition~\ref{pr:#1}}
\newcommand{\colabel}[1]{\label{co:#1}}
\newcommand{\coref}[1]{Corollary~\ref{co:#1}}
\newcommand{\relabel}[1]{\label{re:#1}}
\newcommand{\reref}[1]{Remark~\ref{re:#1}}
\newcommand{\exlabel}[1]{\label{ex:#1}}
\newcommand{\exref}[1]{Example~\ref{ex:#1}}
\newcommand{\delabel}[1]{\label{de:#1}}
\newcommand{\deref}[1]{Definition~\ref{de:#1}}
\newcommand{\eqlabel}[1]{\label{eq:#1}}
\newcommand{\equref}[1]{\eqref{eq:#1}}

\newcommand{\Hom}{{\rm Hom}}
\newcommand{\End}{{\rm End}}
\newcommand{\Ext}{{\sf Ext}}
\newcommand{\Fun}{{\rm Fun}}
\newcommand{\Lax}{{\rm Lax}}
\newcommand{\Cat}{{\bf Cat}}
\newcommand{\Mor}{{\rm Mor}\,}
\newcommand{\Aut}{{\rm Aut}\,}
\newcommand{\Hopf}{{\rm Hopf}\,}
\newcommand{\Ann}{{\rm Ann}\,}
\newcommand{\Ker}{{\rm Ker}\,}
\newcommand{\Coker}{{\rm Coker}\,}
\newcommand{\im}{{\rm Im}\,}
\newcommand{\coim}{{\rm Coim}\,}
\newcommand{\Trace}{{\rm Trace}\,}
\newcommand{\Char}{{\rm Char}\,}
\newcommand{\Mod}{{\bf mod}}
\newcommand{\Spec}{{\rm Spec}\,}
\newcommand{\Span}{{\rm Span}\,}
\newcommand{\sgn}{{\rm sgn}\,}
\newcommand{\Id}{{\rm Id}\,}
\newcommand{\Com}{{\rm Com}\,}
\newcommand{\codim}{{\rm codim}}
\newcommand{\Mat}{{\rm Mat}}
\newcommand{\Coint}{{\rm Coint}}
\newcommand{\Incoint}{{\rm Incoint}}
\newcommand{\can}{{\rm can}}
\newcommand{\sign}{{\rm sign}}
\newcommand{\kar}{{\rm kar}}
\newcommand{\rad}{{\rm rad}}
\newcommand{\Rep}{{\rm Rep}}
\newcommand{\Rmod}{{\sf Rmod}}
\newcommand{\Lmod}{{\sf Lmod}}
\newcommand{\Rcom}{{\sf Rcom}}
\newcommand{\Lcom}{{\sf Lcom}}
\newcommand{\Bicom}{{\sf Bicom}}
\newcommand{\RCOM}{{\sf RCOM}}
\newcommand{\LCOM}{{\sf LCOM}}
\newcommand{\Bim}{{\sf Bim}}
\newcommand{\Bic}{{\sf Bic}}
\newcommand{\Frm}{{\sf Frm}}
\newcommand{\Alg}{{\sf Alg}}
\newcommand{\EM}{{\sf EM}}
\newcommand{\REM}{{\sf REM}}
\newcommand{\LEM}{{\sf LEM}}
\newcommand{\fREM}{{\sf fREM}}
\newcommand{\fLEM}{{\sf fLEM}}
\newcommand{\CAT}{{\sf CAT}}
\newcommand{\GAL}{{\sf GAL}}
\newcommand{\Gal}{{\sf Gal}}
\newcommand{\gAL}{{\sf gAL}}
\newcommand{\Tur}{{\sf Tur}}
\newcommand{\Rng}{{\sf Rng}}
\newcommand{\ndi}{{\sf NdI}}
\newcommand{\MFrm}{\MM({\sf Frm}_k)}

\def\Ab{\underline{\underline{\rm Ab}}}
\def\lan{\langle}
\def\ran{\rangle}
\def\ot{\otimes}
\def\uot{\ul{\otimes}}
\def\bul{\bullet}
\def\ubul{\ul{\bullet}}

\def\id{\textrm{{\small 1}\normalsize\!\!1}}
\def\To{{\multimap\!\to}}
\def\bigperp{{\LARGE\textrm{$\perp$}}} 
\newcommand{\QED}{\hspace{\stretch{1}}
\makebox[0mm][r]{$\Box$}\\}
\def\equal#1{\smash{\mathop{=}\limits^{#1}}}

\def\AA{{\mathbb A}}
\def\BB{{\mathbb B}}
\def\CC{{\mathbb C}}
\def\DD{{\mathbb D}}
\def\EE{{\mathbb E}}
\def\FF{{\mathbb F}}
\def\GG{{\mathbb G}}
\def\HH{{\mathbb H}}
\def\II{{\mathbb I}}
\def\JJ{{\mathbb J}}
\def\KK{{\mathbb K}}
\def\LL{{\mathbb L}}
\def\MM{{\mathbb M}}
\def\NN{{\mathbb N}}
\def\OO{{\mathbb O}}
\def\PP{{\mathbb P}}
\def\QQ{{\mathbb Q}}
\def\RR{{\mathbb R}}
\def\SS{{\mathbb S}}
\def\TT{{\mathbb T}}
\def\UU{{\mathbb U}}
\def\VV{{\mathbb V}}
\def\WW{{\mathbb W}}
\def\XX{{\mathbb X}}
\def\YY{{\mathbb Y}}
\def\ZZ{{\mathbb Z}}

\def\aa{{\mathfrak A}}
\def\bb{{\mathfrak B}}
\def\cc{{\mathfrak C}}
\def\dd{{\mathfrak D}}
\def\ee{{\mathfrak E}}
\def\ff{{\mathfrak F}}
\def\gg{{\mathfrak G}}
\def\hh{{\mathfrak H}}
\def\ii{{\mathfrak I}}
\def\jj{{\mathfrak J}}
\def\kk{{\mathfrak K}}
\def\ll{{\mathfrak L}}
\def\mm{{\mathfrak M}}
\def\nn{{\mathfrak N}}
\def\oo{{\mathfrak O}}
\def\pp{{\mathfrak P}}
\def\qq{{\mathfrak Q}}
\def\rr{{\mathfrak R}}
\def\ss{{\mathfrak S}}
\def\tt{{\mathfrak T}}
\def\uu{{\mathfrak U}}
\def\vv{{\mathfrak V}}
\def\ww{{\mathfrak W}}
\def\xx{{\mathfrak X}}
\def\yy{{\mathfrak Y}}
\def\zz{{\mathfrak Z}}

\def\aaa{{\mathfrak a}}
\def\bbb{{\mathfrak b}}
\def\ccc{{\mathfrak c}}
\def\ddd{{\mathfrak d}}
\def\eee{{\mathfrak e}}
\def\fff{{\mathfrak f}}
\def\ggg{{\mathfrak g}}
\def\hhh{{\mathfrak h}}
\def\iii{{\mathfrak i}}
\def\jjj{{\mathfrak j}}
\def\kkk{{\mathfrak k}}
\def\lll{{\mathfrak l}}
\def\mmm{{\mathfrak m}}
\def\nnn{{\mathfrak n}}
\def\ooo{{\mathfrak o}}
\def\ppp{{\mathfrak p}}
\def\qqq{{\mathfrak q}}
\def\rrr{{\mathfrak r}}
\def\sss{{\mathfrak s}}
\def\ttt{{\mathfrak t}}
\def\uuu{{\mathfrak u}}
\def\vvv{{\mathfrak v}}
\def\www{{\mathfrak w}}
\def\xxx{{\mathfrak x}}
\def\yyy{{\mathfrak y}}
\def\zzz{{\mathfrak z}}

\newcommand{\aA}{\mathscr{A}}
\newcommand{\bB}{\mathscr{B}}
\newcommand{\cC}{\mathscr{C}}
\newcommand{\dD}{\mathscr{D}}
\newcommand{\eE}{\mathscr{E}}
\newcommand{\fF}{\mathscr{F}}
\newcommand{\gG}{\mathscr{G}}
\newcommand{\hH}{\mathscr{H}}
\newcommand{\iI}{\mathscr{I}}
\newcommand{\jJ}{\mathscr{J}}
\newcommand{\kK}{\mathscr{K}}
\newcommand{\lL}{\mathscr{L}}
\newcommand{\mM}{\mathscr{M}}
\newcommand{\nN}{\mathscr{N}}
\newcommand{\oO}{\mathscr{O}}
\newcommand{\pP}{\mathscr{P}}
\newcommand{\qQ}{\mathscr{Q}}
\newcommand{\rR}{\mathscr{R}}
\newcommand{\sS}{\mathscr{S}}
\newcommand{\tT}{\mathscr{T}}
\newcommand{\uU}{\mathscr{U}}
\newcommand{\vV}{\mathscr{V}}
\newcommand{\wW}{\mathscr{W}}
\newcommand{\xX}{\mathscr{X}}
\newcommand{\yY}{\mathscr{Y}}
\newcommand{\zZ}{\mathscr{Z}}

\newcommand{\Aa}{\mathcal{A}}
\newcommand{\Bb}{\mathcal{B}}
\newcommand{\Cc}{\mathcal{C}}
\newcommand{\Dd}{\mathcal{D}}
\newcommand{\Ee}{\mathcal{E}}
\newcommand{\Ff}{\mathcal{F}}
\newcommand{\Gg}{\mathcal{G}}
\newcommand{\Hh}{\mathcal{H}}
\newcommand{\Ii}{\mathcal{I}}
\newcommand{\Jj}{\mathcal{J}}
\newcommand{\Kk}{\mathcal{K}}
\newcommand{\Ll}{\mathcal{L}}
\newcommand{\Mm}{\mathcal{M}}
\newcommand{\Nn}{\mathcal{N}}
\newcommand{\Oo}{\mathcal{O}}
\newcommand{\Pp}{\mathcal{P}}
\newcommand{\Qq}{\mathcal{Q}}
\newcommand{\Rr}{\mathcal{R}}
\newcommand{\Ss}{\mathcal{S}}
\newcommand{\Tt}{\mathcal{T}}
\newcommand{\Uu}{\mathcal{U}}
\newcommand{\Vv}{\mathcal{V}}
\newcommand{\Ww}{\mathcal{W}}
\newcommand{\Xx}{\mathcal{X}}
\newcommand{\Yy}{\mathcal{Y}}
\newcommand{\Zz}{\mathcal{Z}}

\def\mor{\xymatrix{\ar[r]|{|}&}}
\def\morr #1{\xymatrix{\ar[r]|{|}^{#1}&}}

\def\units{{\mathbb G}_m}
\def\ract{\hbox{$\leftharpoonup$}}
\def\lact{\hbox{$\rightharpoonup$}}

\def\ractd{\hbox{$\lhd$}}
\def\lactd{\hbox{$\rhd$}}

\def\*C{{}^*\hspace*{-1pt}{\Cc}}

\def\text#1{{\rm {\rm #1}}}
\def\upsmile#1{\raisebox{0.2ex}{\ensuremath{\stackrel{\smile}{#1}}}}

\def\smashco{\mathrel>\joinrel\mathrel\triangleleft}
\def\cosmash{\mathrel\triangleright\joinrel\mathrel<}

\def\ol{\overline}
\def\ul{\underline}
\def\dul#1{\underline{\underline{#1}}}
\def\Nat{\dul{\rm Nat}}
\def\Set{\dul{\rm Set}}

\renewcommand{\subjclassname}{\textup{2000} Mathematics Subject
     Classification}

\newtheorem{proposition}{Proposition}[section] 
\newtheorem{lemma}[proposition]{Lemma}
\newtheorem{corollary}[proposition]{Corollary}
\newtheorem{theorem}[proposition]{Theorem}

\theoremstyle{definition}
\newtheorem{Definition}[proposition]{Definition}
\newtheorem{example}[proposition]{Example}
\newtheorem{examples}[proposition]{Examples}

\theoremstyle{remark}
\newtheorem{remarks}[proposition]{Remarks}
\newtheorem{remark}[proposition]{Remark}
\newtheorem{notation}[proposition]{Notation}

\title{Multiplier Hopf and bi-algebras}
\date{\today}
\author{K. Janssen}
\author{J. Vercruysse}
\address{Faculty of Engineering, Vrije Universiteit Brussel (VUB), B-1050 Brussels, Belgium}
\email{krjansse@vub.ac.be}
\urladdr{homepages.vub.ac.be/\~{}krjansse}
\email{jvercruy@vub.ac.be}
\urladdr{homepages.vub.ac.be/\~{}jvercruy}

\keywords{}
\subjclass{16W30}

\begin{abstract}
We propose a categorical interpretation of multiplier Hopf algebras, in analogy to usual Hopf algebras and bialgebras. Since the introduction of multiplier Hopf algebras by Van Daele in \cite{VD:Mult} such a categorical interpretation has been missing. We show that a multiplier Hopf algebra can be understood as a coalgebra with antipode in a certain monoidal category of algebras. We show that a (possibly non-unital, idempotent, non-degenerate, $k$-projective) algebra over a commutative ring $k$ is a multiplier bialgebra if and only if the category of its algebra extensions and both the categories of its left and right modules are monoidal and fit, together with the category of $k$-modules, into a diagram of strict monoidal forgetful functors.
\end{abstract}

\maketitle

\section*{Introduction}

A Hopf algebra over a commutative ring $k$ is defined as a $k$-bialgebra, equipped with an antipode map. A $k$-bialgebra can be understood as a coalgebra (or comonoid) in the monoidal category of $k$-algebras; an antipode is the inverse of the identity map in the convolution algebra of $k$-endomorphisms of the $k$-bialgebra.
From the module theoretic point of view, a $k$-bialgebra can also be understood as a $k$-algebra that turns its category of left (or, equivalently, right) modules into a monoidal category, such that the forgetful functor to the category of $k$-modules is a strict monoidal functor.

During the last decades, many generalizations of and variations on the definition of a Hopf algebra have emerged in the literature, such as quasi-Hopf algebras \cite{Drin:QH}, weak Hopf algebras \cite{bohm:weakhopf}, Hopf algebroids \cite{BohmSzl:hgdax} and  
Hopf group (co)algebras \cite{CDL}. 
For most of these notions, the above categorical and module theoretic interpretations remain valid in a certain form, and in some cases, this was exactly the motivation to introduce such a new Hopf-type algebraic structure.

Multiplier Hopf algebras were introduced by Van Daele in \cite{VD:Mult}, motivated by the theory of (discrete) quantum groups. The initial data of a multiplier Hopf algebra are a non-unital algebra $A$, a so-called comultiplication map $\Delta:A\to \MM(A\ot A)$, where $\MM(A\ot A)$ is the {\em multiplier algebra} of $A\ot A$, which is the ``largest'' unital algebra containing $A\ot A$ as a two-sided ideal, and certain bijective endomorphisms on $A\ot A$. It is well-known that the dual of a Hopf algebra is itself a Hopf algebra only if the original Hopf algebra is finitely generated and projective over its base ring. A very nice feature of the theory of multiplier Hopf algebras is that it lifts this duality to the infinite dimensional case.
In particular, the (reduced) dual of a co-Frobenius Hopf algebra is a multiplier Hopf algebra, rather than a usual Hopf algebra.

From the module theoretic point of view, some aspects in the definition of a multiplier Hopf algebra remain however not completely clear. For example, a multiplier Hopf algebra is introduced without defining first an appropriate notion of a ``multiplier bialgebra''. In particular, 
from the definition of a multiplier Hopf algebra a counit can be constructed, rather than being given as part of the initial data.
Furthermore, a categorical characterization of multiplier Hopf algebras as in the classical and more general cases as mentioned before seems to be missing or at least unclear at the moment. In this paper we try to shed some light on this situation.

Our paper is organized as follows. In the first Section, we recall some notions related to non-unital algebras and non-unital modules. We repeat the construction of the multiplier algebra of a non-degenerate (idempotent) algebra, and show how this notion is related to extensions of non-unital algebras. We show how non-degenerate idempotent $k$-projective algebras constitute a monoidal category. 

In the second Section we then introduce the notion of a {\em multiplier bialgebra} as a coalgebra in this monoidal category of non-degenerate idempotent $k$-projective algebras. We show that a non-degenerate idempotent $k$-projective algebra is a multiplier bialgebra if and only if the category of its extensions, as well as the categories of its left and right modules are monoidal and 
fit, together with the category of modules over the commutative base ring, into a diagram of strict monoidal forgetful functors (see \thref{catint}). The main difference from the unital case is that monoidality of the category of left modules is not equivalent to monoidality of the category of right modules.

In the last Section, we recall the definition of a multiplier Hopf algebra by Van Daele. We show that a multiplier Hopf algebra is always a multiplier bialgebra and we give an interpretation of the antipode as a type of convolution inverse. We conclude the paper by providing a categorical way to introduce the notions of module algebra and comodule algebra over a multiplier bialgebra, which in the multiplier Hopf algebra case were studied in \cite{VDZha:Galois}.

\subsection*{Notation}
Throughout, let $k$ be a commutative ring. 
All modules are over $k$ and linear means $k$-linear.
Unadorned tensor products are supposed to be over $k$. 
$\Mm_k$ denotes the category of $k$-modules. By an \emph{algebra} we mean a $k$-module $A$ equipped with an associative $k$-linear map $\mu_A: A\ot A\to A$; it is not assumed to possess a unit.
An \emph{algebra map} between two algebras is a $k$-linear map that preserves the multiplication.
A \emph{unital} algebra is an algebra $A$ with unit element $1_A$. An algebra map between two unital algebras that maps the one unit element to the other, will be called a \emph{unital} algebra map.
A right $A$-module $M$ is a $k$-module equipped with a $k$-linear map $\mu_{M,A}:M\ot A\to M$ such that the associativity condition
$\mu_{M,A}\circ (\mu_{M,A}\ot A)= \mu_{M,A} \circ(M\ot \mu_A)$
holds.   
The $k$-module of right $A$-linear (resp. left $B$-linear, $(B,A)$-bilinear) maps between two right $A$-modules (resp. left $B$-modules, $(B,A)$-bimodules) $M$ and $N$ is denoted by $\Hom_A(M,N)$ (resp. ${_B\Hom}(M,N)$, 
${_B\Hom_A}(M,N)$). We will shortly denote $\Hom_k(M,N)=\Hom(M,N)$.
Note that for any three $k$-modules $M,N$ and $P$, where $M$ is $k$-projective, the following $k$-linear maps are injective:
\begin{eqnarray}\eqlabel{alphabeta}
&&\hspace{-1cm}
\alpha_{M,N,P}:M\ot \Hom(N,P)\to \Hom(N,M\ot P),~ \alpha_{M,N,P}(m\ot f)(n)=m\ot f(n),\\
&&\hspace{-1cm}
\beta_{N,P,M}: \Hom(N,P)\ot M\to \Hom(N, P\ot M),~ \beta_{N,P,M}(f\ot m)(n)=f(n) \ot m.
\end{eqnarray}  
In fact, this is already the case if $M$ is a locally projective $k$-module.
For an object $X$ in a category, we denote the identity morphism on $X$ also by $X$.

\section{Non-unital algebras and extensions}

\subsection{Non-degenerate idempotent algebras}
Let $A$ be an algebra, we say that $A$ is \emph{idempotent} if $A=A^2:=\{ \sum_i a_ia'_i ~|~ a_i, a'_i\in A\}$.  
We will use the following Sweedler-type notation for idempotent algebras: for an element $a\in A$ we denote by  
$\sum a^1a^2$ a (non-unique) element of $A^2$ such that $\sum a^1a^2=a$.
The algebra $A$ is said to be \emph{non-degenerate} if, for all $a\in A$, we have that $a=0$ if $ab=0$ for all $b\in A$ or $ba=0$ for all $b\in A$.

\begin{example}
Clearly, if $A$ is a unital algebra, then $A$ is a non-degenerate idempotent algebra. More generally, $A$ is called an \emph{algebra with right (resp. left) local units} if, for all $a\in A$, there exists an element $e \in A$ such that $ae=a$ (resp. $ea=a$). If $A$ has right (or left) local units, then $A$ is non-degenerate and idempotent.  A \emph{complete set of right (resp. left) local units for $A$} is a subset $E\subset A$ such that, for every $a\in A$, we can find at least one right (resp. left) local unit $e\in E$.
\end{example}

Let $A$ be an algebra. 
The category of all right $A$-modules and right $A$-linear maps is denoted by $\widetilde{\Mm}_A$. A right $A$-module $M$ is called \emph{idempotent} if $M=MA:=\{\sum_i m_ia_i ~|~ m_i\in M, a_i\in A\}$. A right $A$-module $M$ is said to be \emph{non-degenerate} if for all $m\in M$ the equalities $ma=0$ for all $a\in A$ imply that $m=0$.
The full subcategory of $\widetilde{\Mm}_A$ consisting of all non-degenerate idempotent $k$-projective right $A$-modules, is denoted  by $\Mm_A$. It is obvious that every non-degenerate idempotent $k$-projective algebra $A$ is in $\Mm_A$, taking $\mu_{A,A}=\mu_A$. Similarly, we can introduce the category of non-degenerate idempotent $k$-projective left $A$-modules ${_A\Mm}$ and the category of non-degenerate idempotent $k$-projective $(A,B)$-bimodules ${_A\Mm_B}$, with $B$ another algebra.

\begin{example}\exlabel{locunit}
If $A$ is an algebra with right (resp. left) local units, then the idempotent right (resp. left) $A$-modules are exactly those right (resp. left) $A$-modules $M$ such that for all $m\in M$, there exists an element $e\in A$ such that $me= m$ (resp. $em= m$). We say that \emph{$A$ acts with right (resp. left) local units on $M$}. Remark that an idempotent right (resp. left) module over an algebra with right (resp. left) local units is automatically non-degenerate. The converse is not true: consider an algebra with right local units $A$, and let $\End_A(A)$ be the right $A$-module by putting $(f\cdot a)(b)= f(ab)$ for all $a,b\in A$ and $f\in \End_A(A)$. Then $\End_A(A)$ is non-degenerate as right $A$-module, but $A$ can only act with right local units on $A\in\End_A(A)$ if $A$ has a (global) unit element.\\
If $A$ is a unital algebra, then every idempotent right (resp. left) $A$-module $M$ is also unital, in the sense that $m1_A=m$ (resp. $1_Am=m$), for all $m\in M$.
\end{example}

Now consider two algebras $A$ and $B$. We say that $A$ is an \emph{(algebra) extension} of $B$ (or shortly a \emph{$B$-extension}) if $A$ is a $B$-bimodule and the multiplication of $A$ is $B$-bilinear and $B$-balanced (the latter meaning that $(a\cdot b)a' = a(b\cdot a')$, for all $a, a'\in A$ and $b\in B$).
We call $A$ a {\em non-degenerate} (resp.\ {\em idempotent}, {\em $k$-projective}) extension of $B$ if $A$ is an extension of $B$ such that 
$A$ is non-degenerate (resp.\ idempotent, $k$-projective) as a $B$-bimodule.
Note that an idempotent (resp. non-degenerate, $k$-projective) algebra is in a canonical way an idempotent (resp. non-degenerate, $k$-projective) extension of itself.
It is also obvious that a $k$-projective algebra that is a $B$-extension is a $k$-projective $B$-extension.
\begin{lemma}\lelabel{111}
Let $A$ be a non-degenerate algebra and an idempotent extension of $B$, then $A$ is a non-degenerate idempotent extension of $B$.
\end{lemma}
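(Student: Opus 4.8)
The plan is to show that $A$ is non-degenerate as a $B$-bimodule, since idempotency as a $B$-bimodule is part of the hypothesis and non-degeneracy is the only remaining condition for $A$ to be a non-degenerate idempotent extension of $B$. The idea is to reduce $B$-bimodule non-degeneracy to the algebra non-degeneracy of $A$ that we are given, exploiting the $B$-balanced property of the multiplication to move a $B$-action from one tensor factor to the other.

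First I would treat right $B$-non-degeneracy. Suppose $a\in A$ satisfies $a\cdot b=0$ for all $b\in B$; I want to conclude $a=0$. Since $A$ is idempotent as a $B$-bimodule, every $a'\in A$ can be written as a finite sum of elements of the form $b_j\cdot a''_j$ with $b_j\in B$ and $a''_j\in A$. Using the $B$-balanced identity $(a\cdot b)a''=a(b\cdot a'')$, I compute $aa'=\sum_j a(b_j\cdot a''_j)=\sum_j (a\cdot b_j)a''_j=0$. Hence $aa'=0$ for all $a'\in A$, and the non-degeneracy of $A$ as an algebra forces $a=0$.

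The left-handed case is entirely symmetric: if $b\cdot a=0$ for all $b\in B$, I write an arbitrary $a'$ as a sum of elements $a''_j\cdot b_j$ (again by idempotency of the bimodule), and the balanced identity gives $a'a=\sum_j (a''_j\cdot b_j)a=\sum_j a''_j(b_j\cdot a)=0$, so $a=0$ by algebra non-degeneracy on the other side. Together these two implications establish that $A$ is non-degenerate as a $B$-bimodule, which combined with the given idempotency yields the claim.

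I do not expect a serious obstacle here; the only point requiring care is the precise reading of ``idempotent as a $B$-bimodule'' and making sure it supplies both decompositions $A=B\cdot A$ and $A=A\cdot B$ used above. If bimodule idempotency is taken as $A=B\cdot A\cdot B$, both inclusions follow at once, since $B\cdot A\cdot B\subseteq B\cdot A$ and $B\cdot A\cdot B\subseteq A\cdot B$. The essential mechanism is simply that the $B$-balanced condition lets one trade a right $B$-action against a left $B$-action across the multiplication, thereby converting degeneracy over $B$ into degeneracy over $A$.
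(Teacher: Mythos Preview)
Your proof is correct and follows essentially the same approach as the paper's: the paper also assumes $a\cdot b=0$ for all $b\in B$, writes an arbitrary $a'\in A$ as $\sum_i b_i\cdot a'_i$ using left $B$-idempotency, applies $B$-balancedness to get $aa'=0$, and concludes via algebra non-degeneracy. The paper only writes out the right-hand case and leaves the symmetric one implicit, whereas you spell out both; your closing remark about the possible readings of bimodule idempotency is extra caution not present in the paper, which simply uses $A=BA$ directly.
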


\begin{proof}
Take any $a\in A$ such that $a\cdot b= 0$ for all $b\in B$. We have to show that $a=0$. 
Take any other $a'\in A$.
Since $A$ is an idempotent left $B$-module we can write $a'=\sum_i b_i\cdot a'_i\in BA$. Hence we find that $a a'= a (\sum_i b_i\cdot a'_i) = \sum_i (a\cdot b_i) a'_i = 0$, where we used in the second equation the $B$-balancedness of the product of $A$. Since $A$ is a non-degenerate algebra we conclude that $a=0$.
\end{proof}

Let now $A$ and $A'$ be two (non-degenerate) idempotent extensions of $B$. A \emph{transformation} from $A$ to $A'$ is a $B$-bilinear algebra map $t:A\to A'$.
The category with as objects non-degenerate idempotent $k$-projective extensions of an algebra $B$ and as morphisms transformations between the extensions is denoted by $B$-$\Ext$. Note that the canonical extension $B$ of $B$ is only an object in $B$-$\Ext$ provided that $B$ is a non-degenerate idempotent $k$-projective algebra.

\begin{example}\exlabel{extension}
Suppose that $A$ and $B$ are unital algebras. Then $A$ is a (non-degenerate) idempotent extension of $B$ if and only if there is a unital algebra map $f:B\to A$. Indeed, if $A$ is an idempotent extension of $B$, then define $f(b)=1_A\cdot b=b\cdot 1_A$.

Let now $A$ and $B$ be algebras with right and left local units. An algebra map $f:B\to A$ is called a \emph{morphism of algebras with right (resp. left) local units} if there exists a complete set of right (resp. left) local units $E\subset B$ for $B$ such that $f(E)\subset A$ is a complete set of right (resp. left) local units for $A$. The algebra map $f$ induces a natural $B$-bimodule structure on $A$ by putting $b\cdot a\cdot b' = f(b)af(b')$ for all $b,b'\in  B$ and $a\in A$. One can easily see that if $f$ is both a morphism of algebras with right local units and a morphism of algebras with left local units, then this bimodule structure is idempotent (in fact, $B$ acts with left and right local units on $A$), hence $A$ is an idempotent extension of $B$.
\end{example}

\begin{remark}
Not every idempotent extension of algebras with right (or left) local units, or, more generally, of non-degenerate idempotent algebras, is induced by an algebra map as in \exref{extension}. However, in \seref{extensions} we will show that they are induced by a more general type of morphism.
\end{remark}

\subsection{Multiplier algebras}

The notion of a multiplier algebra of a (possibly non-unital) algebra goes back to G.\ Hochschild \cite{Ho:Cohom} in his work on cohomology and extensions, and to B.\ E.\ Johnson \cite{Joh:centr} in his study of centralizers in topological algebra. A first pure algebraic investigation of this notion was initiated by J.\ Dauns in \cite{Dau:mult}. In this Section we will recall the construction of a multiplier algebra for sake of completeness and in order to introduce the necessary notation.

Given an algebra $A$, we consider the $k$-modules
$L(A)=\End_A(A), R(A)={_A\End}(A)$ and $H(A)={_A\Hom_A}(A\ot A,A)$. We have natural linear maps
\begin{eqnarray}\eqlabel{lambdarho}
L:A\to L(A),\ L(b)(a)=\lambda_b(a)=ba;\\
R:A\to R(A),\ R(b)(a)=\rho_b(a)=ab.\nonumber
\end{eqnarray}
Now consider the linear maps
\begin{eqnarray}
\ol{(-)}:R(A)\to H(A),\ \bar{\rho}(a\ot b)=\rho(a)b,\ {\rm for}\ \rho\in R(A); \\
\ul{(-)}:L(A)\to H(A),\ \ul{\lambda}(a\ot b)=a\lambda(b),\ {\rm for}\ \lambda\in L(A).
\end{eqnarray}
We define $\MM(A)$, the \emph{multiplier algebra} of $A$, as the pullback of $\ol{(-)}$ and $\ul{(-)}$ in $\Mm_k$, i.e.\ the pullback of the following diagram.
\begin{eqnarray}\eqlabel{pullback}
\xymatrix{
\MM(A) \ar[rr] \ar[d] && R(A) \ar[d]^{\ol{(-)}}\\
L(A) \ar[rr]_-{\ul{(-)}} && H(A)
}
\end{eqnarray}
Remark that if $A$ is unital then $A\cong L(A)\cong R(A)\cong H(A)$ in a canonical way, 
hence also $\MM(A)\cong A$.
We can understand $\MM(A)$ as the set of pairs $(\lambda,\rho)$, where $\lambda\in L(A)$ and $\rho\in R(A)$, such that 
\begin{equation}\eqlabel{compatlambdarho}
a\lambda(b)=\rho(a)b,
\end{equation}
for all $a,b\in A$. Elements of $\MM(A)$ are called \emph{multipliers}; those of $L(A)$ and $R(A)$ are called \emph{left}, resp. \emph{right multipliers}. The following Proposition collects some elementary properties of $\MM(A)$ and its relation to $A$.

\begin{proposition}\prlabel{multialg}
With notation as above, the following statements hold:
\begin{enumerate}[(i)]
\item $\MM(A)$ is a unital algebra, with multiplication
\begin{equation}\eqlabel{xy}
xy=(\lambda_x\circ\lambda_y,\rho_y\circ\rho_x)
\end{equation}
for all $x=(\lambda_x,\rho_x)$ and $y=(\lambda_y,\rho_y)$ in $\MM(A)$, and unity $1:=1_{\MM(A)}=(A,A)$;
\item there are natural algebra maps 
\begin{eqnarray}
\iota_A:A\to \MM(A), && \iota_A(a)=(\lambda_a,\rho_a),\\
\pi^\ell_A : \MM(A)\to L(A), &&\pi^\ell_A(x)=\lambda,\\
\pi^r_A : \MM(A)\to R(A)^{\rm op}, &&\pi^r_A(x)=\rho, 
\end{eqnarray}
where $x=(\lambda,\rho)\in\MM(A),a\in A$ and the multiplication on $L(A)$ and $R(A)$ is given by composition;
\item $A$ is non-degenerate if and only if 
$L=\pi^\ell_A\circ \iota_A$ and $R=\pi^r_A\circ \iota_A$ are injective; in this case $\iota_A$ is injective as well;
\item $\MM(A)$ is an $A$-bimodule with actions given by
\begin{equation} \eqlabel{AbimMA}
x\ract a:=x\iota_A(a)=\iota_A(\lambda(a)),\quad a\lact x=\iota_A(a)x=\iota_A(\rho(a)),
\end{equation}
for all $x=(\lambda,\rho)\in\MM(A)$ and $a\in A$;
\item $A$ is a non-degenerate idempotent $\MM(A)$-extension, where the left and right actions of $x=(\lambda,\rho)\in\MM(A)$ on $a\in A$ are given by
$$x\lactd a = \lambda(a),\qquad a\ractd x=\rho(a);$$
\item if $\iota_A$ is injective (e.g. if $A$ is non-degenerate) then $A$ is a two-sided ideal in $\MM(A)$;
\item For any $M\in\Mm_A$, we have that the formula
$$m\ractd x =\sum_im_i (a_i\ractd x),$$
where $m=\sum_im_ia_i\in MA$ and $x\in \MM(A)$, defines a unital right $\MM(A)$-action on $M$. 
Hence
$\Mm_A$ is a full subcategory of $\Mm_{\MM(A)}$, the category of unital right $\MM(A)$-modules.
Moreover, for all $M\in\Mm_A,m\in M, a\in A$ and $x\in \MM(A)$, we have
\begin{eqnarray}\eqlabel{hulp}
ma&=& m\ractd \iota_A(a),\\
(m\ractd x) a &=& m (x\lactd a),\nonumber
\end{eqnarray}
i.e. the action of $A$ on $M$ is $\MM(A)$-balanced, and
\begin{eqnarray}\eqlabel{HULP}
(ma)\ractd x&=& m(a\ractd x).
\end{eqnarray}
\end{enumerate}
\end{proposition}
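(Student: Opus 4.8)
The plan is to work directly from the realization of $\MM(A)$ as the set of pairs $x=(\lambda,\rho)\in L(A)\times R(A)$ satisfying the compatibility \equref{compatlambdarho}, and to establish the seven items in the stated order, since the later parts depend on the earlier ones. For (i), I would first check that \equref{xy} again yields a multiplier: the identity $a(\lambda_x\circ\lambda_y)(b)=(\rho_y\circ\rho_x)(a)b$ follows by applying \equref{compatlambdarho} for $x$ and then for $y$, moving the outer factors across in two steps. Associativity of \equref{xy} is inherited from associativity of composition in $L(A)$ and $R(A)$, and the pair of identity maps $(A,A)$ is visibly a two-sided unit.

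For (ii), the map $\iota_A$ lands in $\MM(A)$ and is multiplicative by a one-line associativity computation; the only subtlety is that the second slot of \equref{xy} carries $\rho_y\circ\rho_x$ with the factors reversed, so that $\pi^r_A$ is multiplicative precisely as a map into the opposite algebra $R(A)^{\mathrm{op}}$, while $\pi^\ell_A$ is an ordinary algebra map. For (iii), I would observe that $L=\pi^\ell_A\circ\iota_A$ sends $a\mapsto\lambda_a$ and $R=\pi^r_A\circ\iota_A$ sends $a\mapsto\rho_a$, so that their injectivity is literally the two halves of the non-degeneracy condition; injectivity of $\iota_A$ then follows at once, since $\iota_A(a)=0$ forces $\lambda_a=0$.

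Part (iv) is the computational hinge for (v) and (vi). I would prove the two identities in \equref{AbimMA} by evaluating on an arbitrary $b\in A$: $x\iota_A(a)=\iota_A(\lambda(a))$ reduces to right $A$-linearity of $\lambda$ on the left components and to \equref{compatlambdarho} on the right components, and $\iota_A(a)x=\iota_A(\rho(a))$ is the mirror statement; the $A$-bimodule structure on $\MM(A)$ is then the restriction of its multiplication along $\iota_A$. Granting this, (vi) is immediate, as \equref{AbimMA} exhibits $\iota_A(A)$ as a two-sided ideal and injectivity of $\iota_A$ identifies $A$ with it. For (v), the actions $x\lactd a=\lambda(a)$ and $a\ractd x=\rho(a)$ satisfy the unit and associativity axioms directly from \equref{xy}, while $\MM(A)$-bilinearity and $\MM(A)$-balancedness of the multiplication of $A$ are exactly right (resp. left) $A$-linearity of $\lambda$ (resp. $\rho$) together with \equref{compatlambdarho}. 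The one axiom that is not formal is that the left action of any $x$ commutes with the right action of any $y$, i.e. $\rho_y(\lambda_x(a))=\lambda_x(\rho_y(a))$; here I expect to need non-degeneracy of $A$, pairing both sides on the left with an arbitrary $c\in A$ and using \equref{compatlambdarho} twice to bring each to $\rho_x(c)\rho_y(a)$. Idempotency and non-degeneracy of $A$ as an $\MM(A)$-bimodule then descend from the same properties of $A$ as an algebra, since $\iota_A(A)$ recovers the ordinary product.

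The substance of the Proposition is (vii), and its well-definedness is the step I expect to be the main obstacle. For $M\in\Mm_A$, idempotency lets me write $m=\sum_i m_ia_i\in MA$ and set $m\ractd x=\sum_i m_i\rho(a_i)$; the difficulty is independence of this presentation. The plan is to reduce to showing that $\sum_i m_ia_i=0$ implies $\sum_i m_i\rho(a_i)=0$, and to invoke non-degeneracy of $M$: it suffices that $\bigl(\sum_i m_i\rho(a_i)\bigr)a=0$ for every $a\in A$, and rewriting $\rho(a_i)a=a_i\lambda(a)$ by \equref{compatlambdarho} collapses this to $\bigl(\sum_i m_ia_i\bigr)\lambda(a)=0$. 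Once the action is well defined, unitality is clear from $1=(A,A)$, and associativity follows because $m\ractd x=\sum_i m_i\rho(a_i)$ is itself already written as an element of $MA$, so a second application reproduces $m\ractd(xy)$ via $\rho_{xy}=\rho_y\circ\rho_x$. The identities \equref{hulp} and \equref{HULP} are then short computations: the first uses $\rho_a(a_i)=a_ia$, the balancing statement uses \equref{compatlambdarho}, and \equref{HULP} uses left $A$-linearity of $\rho$. Finally, a right $A$-linear map between such modules respects these presentations, so it is automatically $\MM(A)$-linear; as $\MM(A)$-linear maps restrict to $A$-linear ones along $\iota_A$, the two Hom-sets coincide and $\Mm_A$ is a full subcategory of $\Mm_{\MM(A)}$.
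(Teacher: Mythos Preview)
Your proposal is correct and follows essentially the same approach as the paper's proof, which is quite terse: it gives the two-step compatibility check for (i), the bimodule verification for (iv), and the well-definedness argument for (vii) exactly as you outline, while declaring (ii), (v), and (vi) ``obvious''. Your treatment is in fact more careful than the paper's on one point: in (v) you correctly flag that the commutation $\rho_y(\lambda_x(a))=\lambda_x(\rho_y(a))$ of the left and right $\MM(A)$-actions requires non-degeneracy of $A$, a hypothesis the paper uses implicitly but does not isolate.
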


\begin{proof}
\ul{(i)}. 
By a double application of \equref{compatlambdarho}, we find that
\begin{eqnarray*}
a(\lambda_x\circ\lambda_y)(b)&=&\rho_x(a)\lambda_y(b)=(\rho_y\circ\rho_x)(a)b,
\end{eqnarray*}
hence $xy$ defined by formula \equref{xy} is indeed a multiplier.\\
\ul{(iii)}. This follows from the definition of the maps $L$ and $R$, see \equref{lambdarho}.\\
\ul{(iv)}. This bimodule action is induced by the algebra map $\iota_A$. Explicitly, from the right $A$-linearity of $\lambda$ and the left $A$-linearity of $\rho$, we obtain that $\ract$ and $\lact$ are indeed actions. Let us verify that this defines indeed an $A$-bimodule structure:
$$ a\lact (x \ract b) = a\lact (\iota_A (\lambda(b)))=\iota_A(a\lambda(b))=\iota_A(\rho(a)b)=(\iota_A(\rho(a))) \ract b=(a\lact x)\ract b,$$
where we used \equref{AbimMA} in the third equality.\\
\ul{(ii), (v), and (vi)} are obvious.\\
\ul{(vii)}. We have to check that the action of $\MM(A)$ on $M\in\Mm_A$ is well-defined. Suppose that $m=\sum_im_ia_i=0$ and take $x\in \MM(A)$. For all $a\in A$, we then have $(\sum_im_i(a_i\ractd x)) a=\sum_i m_i ((a_i\ractd x) a)=\sum_i m_i(a_i (x\lactd a))=m(x\lactd a)=0$, hence $\sum_im_i(a_i\ractd x)=0$ by the non-degeneracy of $M$ as right $A$-module. To check the balancedness, we compute
$$(m\ractd x) a=(\sum_im_i (a_i \ractd x)) a = \sum_i m_i (a_i (x\lactd a))= m (x\lactd a).$$
The remaining assertions follow immediately.
\end{proof}

\subsection{Non-degenerate idempotent extensions}\selabel{extensions}

\begin{lemma}\lelabel{extvsmod}
Let $A$ and $B$ be algebras.
\begin{enumerate}[(i)]
\item There is a bijective correspondence between algebra maps $\ell:B\to L(A)$ 
and left $B$-module structures on $A$ such that $\mu_A$ is left $B$-linear (i.e.\ $\mu_A(ba\ot a')= b\mu_A(a\ot a')$, for all $a,a'\in A$ and $b\in B$);
\item there  is a bijective correspondence between algebra maps $r:B\to R(A)^{\rm op}$ 
and right $B$-module structures on $A$ such that
$\mu_A$ is right $B$-linear (i.e.\ $\mu_A(a\ot a'b)=\mu_A(a\ot a')b$, for all $a,a'\in A$ and $b\in B$);
\item there  is a bijective correspondence between algebra maps $f:B\to \MM(A)$ and $B$-extension structures on $A$.
\end{enumerate}
\end{lemma}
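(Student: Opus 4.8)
The plan is to treat (i) and (ii) as straightforward dictionary translations and then assemble (iii) from them using the pullback description of $\MM(A)$.

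For (i), given a left $B$-module structure $b\ot a\mapsto b\cdot a$ on $A$, I would set $\ell(b)=\lambda_b$ with $\lambda_b(a)=b\cdot a$. The hypothesis that $\mu_A$ is left $B$-linear, $(b\cdot a)a'=b\cdot(aa')$, is literally the assertion that each $\lambda_b$ is right $A$-linear, i.e.\ $\lambda_b\in L(A)=\End_A(A)$; and module associativity $(bb')\cdot a=b\cdot(b'\cdot a)$ says exactly $\ell(bb')=\ell(b)\circ\ell(b')$, so $\ell$ is an algebra map since the product on $L(A)$ is composition. Reading these equivalences backwards, $b\cdot a:=\ell(b)(a)$ defines the inverse assignment, and the two are visibly mutually inverse. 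Part (ii) is the exact mirror image: a right $B$-module structure $a\ot b\mapsto a\cdot b$ gives $\rho_b(a)=a\cdot b$, right $B$-linearity of $\mu_A$ becomes left $A$-linearity of $\rho_b$ (so $\rho_b\in R(A)$), and right-module associativity $a\cdot(bb')=(a\cdot b)\cdot b'$ becomes $\rho_{bb'}=\rho_{b'}\circ\rho_b$, the \emph{reversed} composition, which is precisely why the target must be $R(A)^{\rm op}$ rather than $R(A)$.

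For (iii) I would exploit that, by construction, $\MM(A)$ is the pullback of $\ul{(-)}$ and $\ol{(-)}$, so a multiplier is exactly a pair $(\lambda,\rho)\in L(A)\times R(A)$ satisfying the compatibility \equref{compatlambdarho}. Starting from a $B$-extension structure, (i) and (ii) produce algebra maps $\ell,r$ and hence families $\lambda_b,\rho_b$; the defining compatibility $a\lambda_b(a')=\rho_b(a)a'$ reads $a(b\cdot a')=(a\cdot b)a'$, which is \emph{exactly} the $B$-balancedness clause in the definition of an extension. Thus each pair $f(b):=(\lambda_b,\rho_b)$ lands in $\MM(A)$, and using the multiplication formula \equref{xy} together with $\lambda_{bb'}=\lambda_b\circ\lambda_{b'}$ and $\rho_{bb'}=\rho_{b'}\circ\rho_b$ one checks directly that $f(bb')=f(b)f(b')$, so $f$ is an algebra map.

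For the converse, rather than rebuilding everything by hand, I would pull back along $f$ the $\MM(A)$-extension structure on $A$ supplied by \prref{multialg}(v), putting $b\cdot a=f(b)\lactd a$ and $a\cdot b=a\ractd f(b)$; composing with $\pi^\ell_A$ and $\pi^r_A$ recovers the module structures of (i) and (ii), and $B$-balancedness is the image of the multiplier identity \equref{compatlambdarho} for $f(b)$. Unwinding the formulas then shows the two constructions are mutually inverse. The main point demanding care—and where I expect the genuine content to lie—is checking that the reverse construction yields a $B$-\emph{bimodule}, i.e.\ that the induced left and right $B$-actions commute. Routing through \prref{multialg}(v) disposes of this for free, since $A$ is already an $\MM(A)$-bimodule and pulling back along an algebra map preserves commutativity of the two actions; a direct verification, by contrast, would establish $\lambda_b\circ\rho_{b'}=\rho_{b'}\circ\lambda_b$ only after multiplying by an arbitrary element of $A$ and invoking non-degeneracy, which is exactly why leaning on the already-established extension structure of \prref{multialg}(v) is the clean route.
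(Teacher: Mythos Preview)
Your proof is correct and follows essentially the same route as the paper: parts (i) and (ii) are the identical dictionary translation, and (iii) is assembled from them via the pullback description of $\MM(A)$, with the converse obtained by pulling back the $\MM(A)$-extension structure of \prref{multialg}(v)—exactly as the paper does. Your closing aside, that routing through \prref{multialg}(v) sidesteps a non-degeneracy appeal a direct bimodule check would need, is commentary rather than argument (and note that \prref{multialg}(v) is itself stated for arbitrary $A$, so any such subtlety is deferred there rather than eliminated); the paper simply cites \prref{multialg}(v) without dwelling on this point.
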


\begin{proof}
$\ul{(i)}$. Suppose that the map $\ell$ exists, then we define for all $a\in A$ and $b\in B$, 
$$b\cdot a:= \ell(b)(a).$$
If we now take another element $b'\in B$, then we find 
$$b\cdot (b'\cdot a)= \ell(b)(\ell(b')(a))= \ell(bb')(a) = (bb')\cdot a,$$
which shows that the action is associative. Since for any $b\in B$, $\ell(b)\in L(A)$ is right $A$-linear, we obtain that $\mu_A$ is left $B$-linear.
Conversely, if $A$ is a left $B$-module, then define
$$\ell(b)(a):= b\cdot a,$$
for all $b\in B$ and $a\in A$. Since $\mu_A$ is left $B$-linear this map is well-defined. Similar computations as above show that the associativity of the action implies that $\ell$ is an algebra map from $B$ to $L(A)$.\\
$\ul{(ii)}$. This follows symmetrically.\\
$\ul{(iii)}$. Suppose that the map $f$ exists. Then we obtain two algebra maps $\pi^\ell_A\circ f:B\to L(A)$ and $\pi^r_A\circ f:B\to R(A)^{\rm op}$. Hence by the first two parts, $A$ will be a left and right $B$-module with $B$-actions given by $b\cdot a= \pi^\ell_A(f(b))(a)=f(b)\lactd a$ and $a\cdot b=\pi^r_A(f(b))(a)=a\ractd f(b)$ for all $a\in A$ and $b\in B$, where $\lactd$ and $\ractd$ are the left and right $\MM(A)$-actions on $A$ (see \prref{multialg} (v)).
Since $A$ is an $\MM(A)$-extension it follows immediately that $A$ is a $B$-extension.
Conversely, if $A$ is an extension of $B$, then $A$ is in particular a $B$-bimodule and $\mu_A$ is $B$-bilinear, so by the first two parts, we have algebra maps $\ell:B\to L(A)$ and $r:B\to R(A)^{\rm op}$. Now define for all $b\in B$, $f(b):=(\ell(b),r(b))$. Then the $B$-balancedness of the multiplication of $A$ implies that the image of $f$ lies in $\MM(A)$. $f$ is an algebra map since $\ell$ and $r$ are algebra maps.
\end{proof}

\begin{remark}\relabel{nondegenerateVD}
In \cite[Appendix]{VD:Mult}, an algebra map $f:B\to \MM(A)$ such that $f(B)\lactd A=A=A\ractd f(B)$ was called \emph{non-degenerate}. This coincides with our notion of $A$ being an idempotent $B$-extension, which is by \leref{111} the same as a non-degenerate idempotent $B$-extension if $A$ is a non-degenerate algebra. The latter is always asumed in \cite{VD:Mult}.
\end{remark}

\begin{lemma}\lelabel{extvsfunct}
Let $B$ be an algebra and $A$ a non-degenerate idempotent $k$-projective algebra.
\begin{enumerate}[(i)]
\item If there exists an algebra map $f:B\to \MM(A)$ such that $A$ is a non-degenerate idempotent ($k$-projective) right $B$-module with action induced by $f$, then there is a functor $F_r:\Mm_A \to \Mm_B$ such that the following diagram commutes.
\[
\xymatrix{
\Mm_A \ar[rr]^-{F_r} \ar[dr] && \Mm_B \ar[dl] \\
&\Mm_k
}
 \]
Here the unlabeled arrows are forgetful functors. 
\item If there exists a functor $F_r:\Mm_A\to\Mm_B$ rendering commutative the above diagram, then there is an algebra map $r:B\to R(A)^{op}$ such that $A$ is a non-degenerate idempotent $k$-projective right $B$-module with action induced by $r$.
\end{enumerate}
 \end{lemma}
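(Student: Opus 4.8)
The plan is to prove both implications by transporting the right $A$-action across $f$ by means of the unital right $\MM(A)$-module structure of \prref{multialg}(vii), and conversely by probing the hypothetical functor with the left multiplication maps of $A$. For (i), I would define $F_r(M)=M$ as a $k$-module, equipped with the right $B$-action $m\cdot b:=m\ractd f(b)$, the right-hand side being the unital $\MM(A)$-action of \prref{multialg}(vii); on morphisms $F_r(g)=g$. That this is a well-defined functor into right $B$-modules is essentially formal: the module axioms descend from those of the $\MM(A)$-action because $f$ is an algebra map, and every right $A$-linear $g$ is already right $\MM(A)$-linear, hence right $B$-linear — this one checks by writing $m=\sum_i m_ia_i$, using $a_i\ractd x\in A$ and the right $A$-linearity of $g$. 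Commutativity of the triangle with the forgetful functors to $\Mm_k$ is immediate, since $F_r$ changes neither the underlying $k$-modules nor the underlying $k$-linear maps.

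The real content is that $F_r(M)$ lands in $\Mm_B$, i.e.\ is non-degenerate, idempotent and $k$-projective as a right $B$-module. $k$-projectivity is inherited verbatim. For idempotency I would use that $A$ is idempotent as a right $B$-module: writing $m=\sum_j m_ja_j\in MA$ and each $a_j=\sum_i a_{ji}\ractd f(b_{ji})$, relation \equref{HULP} rewrites this as $m=\sum_{i,j}(m_ja_{ji})\ractd f(b_{ji})$, exhibiting $m\in F_r(M)\cdot B$.

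Non-degeneracy is the step I expect to be the main obstacle. Suppose $m\ractd f(b)=0$ for all $b\in B$. Acting on the right by $a\in A$ and using the balancedness relation \equref{hulp} gives $m(f(b)\lactd a)=(m\ractd f(b))a=0$ for all $a\in A$ and $b\in B$, so that $m$ kills $f(B)\lactd A$. The subtlety is that, although $F_r$ and the hypothesis are phrased on the right, non-degeneracy of $F_r(M)$ pulls in the left action of $B$ on $A$ through \equref{hulp}; to finish one must know that $A$ is idempotent as a left $B$-module as well, i.e.\ $f(B)\lactd A=A$ — equivalently, that $A$ is an idempotent $B$-extension in the sense of \reref{nondegenerateVD}. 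Granting this, $mA=0$, whence $m=0$ by non-degeneracy of $M$ over $A$. This asymmetry between the roles of left and right idempotency is precisely the delicate point of the argument.

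For (ii), commutativity of the triangle forces $F_r$ to be the identity on underlying $k$-modules and $k$-linear maps. Applying it to the object $A\in\Mm_A$ endows $A$ with a right $B$-action $a\cdot b$ for which it is a non-degenerate idempotent $k$-projective right $B$-module. To see that this action makes $\mu_A$ right $B$-linear — which is what \leref{extvsmod}(ii) requires in order to produce $r:B\to R(A)^{\rm op}$ — the key idea is to test $F_r$ on left multiplications: for each $a\in A$ the map $\lambda_a:A\to A$, $\lambda_a(a')=aa'$, lies in $\End_A(A)$, hence is a morphism $A\to A$ in $\Mm_A$. Consequently $F_r(\lambda_a)=\lambda_a$ is a morphism in $\Mm_B$, i.e.\ right $B$-linear, which unwinds to $a(a'\cdot b)=(aa')\cdot b$ for all $a,a',b$ — exactly the right $B$-linearity of $\mu_A$. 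Invoking \leref{extvsmod}(ii) then yields the desired algebra map $r:B\to R(A)^{\rm op}$ inducing the action, while non-degeneracy, idempotency and $k$-projectivity of $A$ as a right $B$-module come for free from $F_r(A)\in\Mm_B$.
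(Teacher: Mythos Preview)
Your argument is correct and matches the paper's almost step for step: the paper defines $m\cdot b:=m\ractd f(b)$ via \prref{multialg}(vii), proves idempotency by the chain $M=MA=M(AB)=(MA)B=MB$, and for non-degeneracy writes an arbitrary $a\in A$ as $\sum_i b_i\cdot a_i$ using left $B$-idempotency of $A$ to obtain $ma=\sum_i(m\cdot b_i)a_i=0$; your part (ii), probing $F_r$ with the maps $\lambda_a$ and invoking \leref{extvsmod}(ii), is identical to the paper's. You are right to flag that non-degeneracy needs $f(B)\lactd A=A$ (left idempotency) rather than just the right-module hypothesis stated---the paper simply asserts ``since $A$ is idempotent as left $B$-module'' without further comment, and indeed the lemma is only ever applied (in \thref{charext}) under the full bimodule idempotency assumption.
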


\begin{proof}
$\ul{(i)}$. Recall from \leref{extvsmod} (iii) that $A$ is a $B$-bimodule, with $B$-actions
given by $b\cdot a=f(b)\lactd a$ and $a\cdot b=a\ractd f(b)$ for all $a\in A$ and $b\in B$,
and $\mu_A$ is $B$-balanced.
For any $M\in\Mm_A$, consider $m\in M$ and $b\in B$ and define $m\cdot b:= m\ractd f(b)$, where we use the action of $\MM(A)$ on $M$ defined in \prref{multialg} (vii).
Since the action of $A$ on $M$ is $\MM(A)$-balanced, it will be $B$-balanced as well, i.e. for all $m\in M,a\in A$ and $b\in B$
\begin{eqnarray}\eqlabel{huulp}
(m\cdot b)a= m(b\cdot a).
\end{eqnarray}
Let us verify that this action of $B$ on $M$ is non-degenerate and idempotent. Using the idempotency of $M$ as right $A$-module and of $A$ as right $B$-module, we find that $M= MA = M(AB) = (MA)B =MB$, where the third equality, meaning $(ma)\cdot b= m(a\cdot b)$ for all $m\in M,a\in A$ and $b\in B$, follows directly from the definition of the right $B$-action on $M$ and \equref{hulp}.
Indeed, we have that $(ma)\cdot b=(ma)\ractd f(b)= m(a\ractd f(b))=m(a\cdot b)$.
To prove the non-degeneracy of the right $B$-action on $M$, take any $m\in M$ such that $m\cdot b=0$ for all $b\in B$. Since $A$ is idempotent as left $B$-module, we can write any $a\in A$ as $a=\sum_i b_i \cdot a_i$, and obtain
$$ma=\sum_i m (b_i\cdot a_i) \equal{\equref{huulp}} \sum_i (m\cdot b_i) a_i =0.$$
Since this holds for all $a\in A$, we obtain by the non-degeneracy of $M$ as right $A$-module that $m=0$.\\
$\ul{(ii)}$.
Since $A$ is a non-degenerate idempotent $k$-projective algebra, we have in particular that $A\in\Mm_A$. Hence $F_r(A)=A$ is a non-degenerate idempotent ($k$-projective) right $B$-module. By \leref{extvsmod}, we know that any right $B$-module structure on $A$ is induced by a map $r:B\to R(A)^{\rm op}$, provided that the multiplication map $\mu_A$ of $A$ is right $B$-linear. That the latter is the case follows from the functoriality of $F_r$. Indeed, fix $a\in A$ and consider the right $A$-linear map $\lambda_a\in L(A)$, defined by $\lambda_a(a')=aa'$, for all $a'\in A$. We then have that $F_r(\lambda_a)=\lambda_a$ is a morphism in $\Mm_B$, i.e. $\lambda_a(a'\cdot b)=\lambda_a(a')\cdot b$, for all $a'\in A$. Thus, for all $a,a'\in A$ we have that $a(a'\cdot b)=(aa')\cdot b$, that is, $\mu_A$ is right $B$-linear.
\end{proof}
\begin{theorem}\thlabel{charext}
Let $A$ be a non-degenerate idempotent $k$-projective algebra and $B$ any algebra.
Then there is a bijective correspondence between the following sets of data:
\begin{enumerate}[(i)]
\item non-degenerate idempotent ($k$-projective) $B$-extension structures on $A$;
\item algebra maps $f:B\to \MM(A)$ such that $A$ is a non-degenerate idempotent ($k$-projective) $B$-bimodule with $B$-actions induced by $f$;
\item functors $F$, $F_r$ and $F_\ell$ which render commutative the following diagram of functors (where the unlabeled arrows are forgetful functors); 
\[
\xymatrix{
&&A\hbox{-}\Ext \ar[ddll] \ar[ddrr] \ar[d]^{F}\\
&&B\hbox{-}\Ext \ar[dl] \ar[dr] \\
{_A\Mm}\ar[r]^-{F_\ell} \ar[drr] &{_B\Mm} \ar[dr] && \Mm_B \ar[dl] & \Mm_A \ar[l]_-{F_r} \ar[dll]\\
&& \Mm_k
}
\]
\item functors $F$ which render commutative the following diagram of functors (where the unlabeled arrows are forgetful functors). 
\[
\xymatrix{
A\hbox{-}\Ext\ar[rr]^--{F} \ar[dr] &&B\hbox{-}\Ext \ar[dl] \\
&\Mm_k
}
 \]
\end{enumerate}
In any of the above equivalent situations, the map $f:B\to \MM(A)$ from part $(ii)$ can be extended uniquely to a unital algebra morphism $\bar{f}:\MM(B)\to \MM(A)$ such that $\bar{f}\circ \iota_B = f$.
\end{theorem}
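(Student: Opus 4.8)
The plan is to route all four conditions through the single datum (ii), the algebra map $f:B\to\MM(A)$, and to prove the final extension assertion \emph{first}, since it is exactly what is needed to build the functor $F$ in the diagrams of (iii) and (iv). For the extension claim, note that by hypothesis $A$ is a non-degenerate idempotent $k$-projective $B$-bimodule, so $A\in\Mm_B$ and $A\in{_B\Mm}$. Applying \prref{multialg}(vii) (and its left-handed analogue) with $B$ in place of $A$, the $B$-bimodule $A$ acquires a \emph{unital} $\MM(B)$-bimodule structure extending the $B$-action, and by \equref{hulp} the multiplication $\mu_A$ is $\MM(B)$-balanced. Thus $A$ is an $\MM(B)$-extension, and \leref{extvsmod}(iii) (applied with the unital algebra $\MM(B)$) produces an algebra map $\bar{f}:\MM(B)\to\MM(A)$. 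Unitality of the $\MM(B)$-action forces $\bar{f}(1_{\MM(B)})=1_{\MM(A)}$, and the fact that the $\MM(B)$-action restricts along $\iota_B$ to the original $B$-action gives $\bar{f}\circ\iota_B=f$. Uniqueness follows because $A=B\cdot A$ is idempotent and non-degenerate, so the value of $\bar{f}(x)$ on $A$ is already determined by $f$ through the formula of \prref{multialg}(vii).

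Next, (i)$\Leftrightarrow$(ii) is \leref{extvsmod}(iii) together with \leref{111}: a $B$-extension structure on $A$ is the same as an algebra map $f:B\to\MM(A)$, and the adjectives non-degenerate/idempotent/$k$-projective on the $B$-bimodule $A$ match on the two sides. For (ii)$\Rightarrow$(iii) I would build the three functors. The module functors $F_r:\Mm_A\to\Mm_B$ and $F_\ell:{_A\Mm}\to{_B\Mm}$ are given by \leref{extvsfunct}(i) and its left-handed version. For $F:A$-$\Ext\to B$-$\Ext$, take $C\in A$-$\Ext$; by \leref{extvsmod}(iii) it corresponds to a non-degenerate algebra map $g:A\to\MM(C)$, which by the extension claim just proved (with $B,A$ replaced by $A,C$) extends to $\bar{g}:\MM(A)\to\MM(C)$. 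Then $\bar{g}\circ f:B\to\MM(C)$ equips $C$ with a $B$-extension structure, again non-degenerate idempotent $k$-projective (the relevant closure properties being those already used in the proof of \leref{extvsfunct}(i)); on morphisms $F$ is the identity on underlying maps. Commutativity of the triangles and squares is then a routine check that the $B$-actions obtained along the two paths agree.

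The reverse passage (iii)$\Rightarrow$(ii) is clean, and is where the module categories do the work: \leref{extvsfunct}(ii) applied to $F_r$ yields an algebra map $r:B\to R(A)^{\rm op}$ with $\mu_A$ right $B$-linear, and applied to $F_\ell$ yields $\ell:B\to L(A)$ with $\mu_A$ left $B$-linear; the balancedness supplied by $F$ places $(\ell,r)$ in $\MM(A)$, recovering $f$. The implication (iii)$\Rightarrow$(iv) is mere restriction of the diagram. One also records the two round trips: starting from $f$, building $F$, and evaluating at the canonical object $A$ returns $g=\iota_A$, hence $\bar{g}=\Id_{\MM(A)}$ and $\bar{g}\circ f=f$, so the passage (ii)$\to$(iv)$\to$(ii) is the identity.

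The genuinely delicate step is (iv)$\Rightarrow$(ii)/(i): given only $F:A$-$\Ext\to B$-$\Ext$ over $\Mm_k$, I would apply it to the canonical object $A\in A$-$\Ext$ to obtain $F(A)\in B$-$\Ext$, whose underlying $k$-module is $A$. The main obstacle is that the morphisms of $A$-$\Ext$ are only the \emph{algebra} maps, so that, unlike in $\Mm_A$, the left multiplications $\lambda_a$ are not available; yet one must still show that the multiplication carried by $F(A)$ as a $B$-extension coincides with $\mu_A$, so that its $B$-bimodule structure is honestly a $B$-extension structure on the original algebra. I expect to resolve this by exploiting idempotency ($A=A^2$) and non-degeneracy to express $\mu_A$ through the $A$-bilinear algebra morphisms that $F$ does preserve, thereby reducing (iv) to the already-settled datum (ii); this is the point I expect to cost the most work.
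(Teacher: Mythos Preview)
Your overall strategy---routing everything through the algebra map $f:B\to\MM(A)$ of (ii)---is the paper's, and your treatments of (i)$\Leftrightarrow$(ii) and of the final extension statement are correct; for the latter the paper simply cites \cite[Proposition~A.5]{VD:Mult} and writes down the explicit formulas, whereas your argument via \prref{multialg}(vii) is a perfectly good self-contained alternative.

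There are two genuine differences. First, for (ii)$\Rightarrow$(iii) you build $F$ by extending each $g:A\to\MM(C)$ to $\bar g:\MM(A)\to\MM(C)$ and composing with $f$. The paper instead equips $R\in A\text{-}\Ext$ with the left and right $B$-actions coming from $F_\ell$ and $F_r$, and then checks by hand---a short chain of equalities using that $R$ is $A$-idempotent and $\mu_R$ is $A$-balanced---that $\mu_R$ is $B$-balanced. Your route is conceptually cleaner but depends on having the extension result first; the paper's is more elementary and does not.

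Second, and more important: you have the difficulty exactly backwards on (iv)$\Rightarrow$(i). The paper dispatches this in one line: $A\in A\text{-}\Ext$, hence $A=F(A)\in B\text{-}\Ext$, which is the desired $B$-extension structure on $A$. Your worry that the triangle over $\Mm_k$ only pins down the underlying $k$-module of $F(A)$, not its multiplication, is formally legitimate as the diagram is literally drawn, but the paper simply reads $F(A)$ as the algebra $A$ with a new $B$-bimodule structure and does not regard this as an issue. So the step you anticipate costing ``the most work'' is treated as immediate. If you want to be scrupulous, the honest fix is to sharpen the hypothesis in (iv) (have the forgetful functors land in $k$-algebras, or require $F$ to be the identity on underlying algebras) rather than to attempt to reconstruct $\mu_A$ from the functor; the morphisms in $A\text{-}\Ext$ are too sparse to make the latter argument go through cleanly. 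Also note that your proposed route (iii)$\Rightarrow$(ii) via $F_\ell,F_r$ and ``balancedness supplied by $F$'' runs into the same point, since the balancedness you want is of $\mu_A$, which again presumes $F(A)$ carries $\mu_A$; the paper avoids this by going (iii)$\Rightarrow$(iv)$\Rightarrow$(i) instead.
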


\begin{proof}
$\ul{(i)\Leftrightarrow(ii)}$. This equivalence follows directly from \leref{extvsmod} (iii).\\
$\ul{(ii)\Rightarrow(iii)}$. The functor $F_r$ is constructed in \leref{extvsfunct}; the functor $F_\ell$ is constructed in a symmetrical way. To construct the functor $F$, take a non-degenerate idempotent extension $R$ of $A$. The left and right $B$-action on $R$ are induced by the functors $F_\ell$ and $F_r$. We only have to verify that these actions impose that $R$ is a $B$-bimodule and that the multiplication map $\mu_R$ of $R$ is $B$-bilinear and $B$-balanced. We will prove the $B$-balancedness of $\mu_R$, and leave the other verifications to the reader. Let $r,r'\in R$ and $b\in B$. Since $R$ is an idempotent $A$-bimodule, we can write $r=\sum_ir_ia_i\in RA$ and $r'=\sum_j a'_jr'_j\in AR$. We then have 
\begin{eqnarray*}
&&\hspace{-2cm}
(r\cdot b)r'= \sum_i (r_i(a_i\cdot b))r'= \sum_i r_i((a_i\cdot b)r')
= \sum_{i,j} r_i((a_i\cdot b)(a'_jr'_j))\\
&=& \sum_{i,j} r_i(((a_i\cdot b)a'_j)r'_j)= \sum_{i,j} r_i((a_i (b\cdot a'_j))r'_j)
= \sum_{i,j} r_i(a_i ((b\cdot a'_j)r'_j))\\
&=& \sum_i r_i(a_i (b \cdot r'))= \sum_i (r_ia_i) (b \cdot r')= r(b\cdot r'),
\end{eqnarray*}
as wanted. Here we used in equalities two and eight that $\mu_R$ is $A$-balanced, and in the fifth equality that $\mu_A$ is $B$-balanced.\\
$\ul{(iii)\Rightarrow(iv)}$. Trivial.\\
$\ul{(iv)\Rightarrow(i)}$. Obviously $A$ is an object of $A$-$\Ext$, hence $A=F(A)\in B\hbox{-}\Ext$.\\
The last statement was proven in \cite[Proposition A.5]{VD:Mult}. The map $\bar{f}:\MM(B)\to \MM(A)$ is defined by the following formulas: 
\begin{eqnarray}
\bar{f}(x)\lactd a&=& \sum_i f(x \lactd b_i)\lactd a_i= \sum_i f(\lambda_x(b_i))\lactd a_i, \textrm{ and}\\
a \ractd \bar{f}(x)&=& \sum_j a'_j\ractd f(b'_j \ractd x)= \sum_j a'_j\ractd f(\rho_x(b'_j)),
\end{eqnarray}
where $x=(\lambda_x,\rho_x)\in \MM(B)$, $a\in A$ and $a=\sum_i b_ia_i\in BA$, $a=\sum_j a'_jb'_j\in AB$.
\end{proof}

\subsection{The monoidal category of non-degenerate idempotent $k$-projective algebras}

We can now introduce the category $\ndi_k$ as the category whose objects are non-degenerate idempotent $k$-projective algebras and whose morphisms are non-degenerate idempotent ($k$-projective) extensions (or simply idempotent extensions, in view of \leref{111}). 
The reason for the extra assumption that the algebras are projective as $k$-modules is explained by the following Lemma.
Let $A$ and $B$ be two algebras. Then $A\ot B$ is again an algebra with multiplication
\begin{equation}\eqlabel{tensoralgebra}
(a\ot b)(a'\ot b')= aa'\ot bb',
\end{equation}
for all $a,a'\in A$ and $b,b'\in B$. Similarly, given any right $A$-module $M$ and right $B$-module $N$, $M\ot N$ is a right $A\ot B$-module with action
$$(m\ot n)(a\ot b)= ma\ot nb,$$
for all $m\in M$, $n\in N$, $a\in A$ and $b\in B$.

\begin{lemma}\lelabel{tensor}
Let $A$ and $B$ be two algebras. Suppose that $M$ is a non-degenerate idempotent $k$-projective right $A$-module and $N$ a non-degenerate idempotent $k$-projective right $B$-module. Then $M\ot N$ is a non-degenerate idempotent $k$-projective right $A\ot B$-module.
\end{lemma}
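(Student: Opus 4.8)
The plan is to check the three defining conditions for membership in $\Mm_{A\ot B}$ in turn: $k$-projectivity, idempotency, and non-degeneracy. The first two are routine. For $k$-projectivity, realizing $M$ and $N$ as direct summands of free $k$-modules realizes $M\ot N$ as a direct summand of a free $k$-module. For idempotency, I would take a simple tensor $m\ot n$, use $M=MA$ and $N=NB$ to write $m=\sum_i m_ia_i$ and $n=\sum_j n_jb_j$, and observe that $m\ot n=\sum_{i,j}(m_i\ot n_j)(a_i\ot b_j)$ lies in $(M\ot N)(A\ot B)$; since such tensors span, $M\ot N=(M\ot N)(A\ot B)$.

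The substantial part is non-degeneracy, and it is the single place where the projectivity hypothesis does real work. Recall that for a right module this means: if $x\in M\ot N$ satisfies $x\cdot c=0$ for all $c\in A\ot B$, then $x=0$; by $k$-linearity it suffices to treat $c=a\ot b$. The obstacle is that the relation $x(a\ot b)=(r_a\ot s_b)(x)=0$ entangles both tensor factors (here $r_a:M\to M$, $r_a(m)=ma$, and $s_b:N\to N$, $s_b(n)=nb$), so the one-variable non-degeneracy of $M$ and of $N$ cannot be applied directly. The idea is to peel off the two partial actions one factor at a time, and the tools for this are exactly the injective maps $\alpha$ and $\beta$ of \equref{alphabeta} together with the flatness of the $k$-projective modules $M$ and $N$.

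Concretely, I would introduce the two injections $\phi_M:M\to\Hom(A,M)$, $\phi_M(m)(a)=ma$, and $\phi_N:N\to\Hom(B,N)$, $\phi_N(n)(b)=nb$, which are injective because $M$ and $N$ are non-degenerate. Tensoring these with an identity stays injective since projective modules are flat, and postcomposing with $\alpha$, $\beta$ stays injective by \equref{alphabeta}. This yields two injective $k$-linear maps
\[
\Psi=\alpha_{M,B,N}\circ(M\ot\phi_N):M\ot N\to\Hom(B,M\ot N),\qquad \Phi=\beta_{A,M,N}\circ(\phi_M\ot N):M\ot N\to\Hom(A,M\ot N),
\]
characterized by $\Psi(x)(b)=(M\ot s_b)(x)$ and $\Phi(y)(a)=(r_a\ot N)(y)$ (using the convention that an object denotes its identity morphism).

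Given $x$ with $x(a\ot b)=0$ for all $a,b$, I would then compute $\Phi\big(\Psi(x)(b)\big)(a)=(r_a\ot N)\big((M\ot s_b)(x)\big)=(r_a\ot s_b)(x)=x(a\ot b)=0$. Hence $\Phi(\Psi(x)(b))=0$ for every $b$; injectivity of $\Phi$ forces $\Psi(x)(b)=0$ for all $b$, and then injectivity of $\Psi$ forces $x=0$, completing non-degeneracy. The main difficulty, as indicated above, is organizing this factor-by-factor cancellation; once $\Psi$ and $\Phi$ are in place the conclusion is immediate, but their injectivity is precisely what forces the projectivity (or at least local projectivity) assumption into the hypotheses of the Lemma.
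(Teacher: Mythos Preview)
Your proposal is correct and follows essentially the same approach as the paper: both arguments reduce non-degeneracy of $M\ot N$ to the injectivity of the two maps obtained by composing $\phi_M\ot N$ and $M\ot\phi_N$ with the canonical injections $\beta$ and $\alpha$ of \equref{alphabeta}, and then peel off the two tensor factors one at a time. The only cosmetic difference is the order in which the two injections are applied (you cancel the $A$-variable first, the paper cancels the $B$-variable first), which is immaterial by symmetry.
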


\begin{proof}
The $k$-projectivity and idempotency of $M\ot N$ are obvious. To prove the non-degeneracy, remark that $M$ is non-degenerate as right $A$-module if and only if the map $\phi_M:M\to \Hom(A,M), m\mapsto (a \mapsto ma) $ is injective. Since we assumed that $N$ and $M$ are projective and thus flat as $k$-modules, we then obtain that the maps
$$\phi_M\ot N:M\ot N\to \Hom(A,M)\ot N \quad {\rm and }\quad M\ot \phi_N:M\ot N\to M\ot \Hom(A,N)$$
are both injective. 
If we compose these maps respectively with the injective maps $\beta_{A,M,N}:\Hom(A,M)\ot N\to \Hom(A,M\ot N)$ and $\alpha_{M,A,N}: M\ot \Hom(A,N)\to \Hom(A,M\ot N)$ of \equref{alphabeta}, we get the following injective maps
$$\varphi_{M,N}:M\ot N\to \Hom(A,M\ot N),~ \varphi_{M,N}(m\ot n)(a)=ma \ot n$$
and
$$\psi_{M,N}:M\ot N\to \Hom(A,M\ot N),~ \psi_{M,N}(m\ot n)(a)=m \ot na.$$
Now consider any  $\sum_i m_i\ot n_i\in M\ot N$ and suppose that $(\sum_i m_i\ot n_i) (a\ot b)= \sum_i m_ia\ot n_ib=0$ for all $a\ot b\in A\ot B$. If we keep $a$ fixed and let $b$ vary over $B$, then we obtain by the injectivity of $\psi_{M,N}$ that $\sum_i m_ia\ot n_i = 0$. Since this holds for all $a\in A$, we can now apply the injectivity of $\varphi_{M,N}$, and we find  $\sum_i m_i\ot n_i =0$. Hence $M\ot N$ is non-degenerate as right $A\ot B$-module.
\end{proof}

\begin{proposition}\prlabel{catNdI}
With notation as above, $\ndi_k$ is a monoidal category.
\end{proposition}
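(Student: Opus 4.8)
The plan is to equip $\ndi_k$ with the tensor product $\ot=\ot_k$ inherited from $\Mm_k$, together with the unit object $k$, which is unital (hence non-degenerate and idempotent) and free (hence $k$-projective), so $k\in\ndi_k$. On objects the tensor product already lands in $\ndi_k$: applying \leref{tensor} with $M=A$ and $N=B$ shows that $A\ot B$, with the componentwise multiplication \equref{tensoralgebra}, is again non-degenerate, idempotent and $k$-projective.

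The real content is to define $\ot$ on morphisms and verify bifunctoriality. Recall from \thref{charext} that a morphism $B\to A$ in $\ndi_k$ is encoded by an algebra map $f:B\to\MM(A)$, that the identity on $A$ is $\iota_A$, and that the composite of $f:B\to\MM(A)$ with $g:A\to\MM(C)$ is $\bar g\circ f$, where $\bar g:\MM(A)\to\MM(C)$ is the unique unital extension from the last statement of \thref{charext}. First I would introduce the natural algebra map $\kappa_{A,B}:\MM(A)\ot\MM(B)\to\MM(A\ot B)$ sending $x\ot y=(\lambda_x,\rho_x)\ot(\lambda_y,\rho_y)$ to $(\lambda_x\ot\lambda_y,\rho_x\ot\rho_y)$; a one-line check using \equref{compatlambdarho} for $x$ and for $y$ shows the target is a genuine multiplier, and one verifies directly that $\kappa_{A,B}\circ(\iota_A\ot\iota_B)=\iota_{A\ot B}$. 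Given morphisms $f:B\to\MM(A)$ and $g:D\to\MM(C)$, I define their tensor as $\kappa_{A,C}\circ(f\ot g):B\ot D\to\MM(A\ot C)$. To see this is a legitimate morphism of $\ndi_k$, note that the induced $(B\ot D)$-bimodule structure on $A\ot C$ is the componentwise one; it is idempotent because $(B\ot D)(A\ot C)=BA\ot DC=A\ot C$ and symmetrically on the right, so since $A\ot C$ is a non-degenerate algebra, \leref{111} upgrades this to a non-degenerate idempotent extension, which is automatically $k$-projective.

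The main obstacle is bifunctoriality, which unwinds to the single identity
\[
\kappa_{A',C'}\circ(\bar f\ot\bar g)=\overline{\kappa_{A',C'}\circ(f\ot g)}\circ\kappa_{A,C}
\]
of algebra maps $\MM(A)\ot\MM(C)\to\MM(A'\ot C')$, for morphisms $f:A\to\MM(A')$ and $g:C\to\MM(C')$. I would prove this by testing left multiplier actions on an arbitrary $a'\ot c'\in A'\ot C'$: writing $a'\in AA'$ and $c'\in CC'$ via the idempotent module structures induced by $f$ and $g$, and applying the explicit formula for $\bar f$ and $\bar g$ from \thref{charext}, both sides send $x\ot y$ to the multiplier whose left action on $a'\ot c'$ is $(\bar f(x)\lactd a')\ot(\bar g(y)\lactd c')$ (see \prref{multialg} for the action $\lactd$). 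Since $A'\ot C'$ is non-degenerate, a multiplier is determined by its left action, so the two sides coincide. Preservation of identities is immediate from $\kappa_{A,B}\circ(\iota_A\ot\iota_B)=\iota_{A\ot B}$.

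Finally, I would take the associativity and left/right unit constraints to be those of $(\Mm_k,\ot_k,k)$. Each is a bijective algebra map between objects of $\ndi_k$, and any such $\phi:X\to Y$ is an isomorphism in $\ndi_k$: the extension $\iota_Y\circ\phi$ makes $Y$ an idempotent $X$-bimodule (as $\phi$ is onto and $Y$ is idempotent), hence a non-degenerate idempotent extension by \leref{111}, with inverse induced by $\phi^{-1}$. Their naturality with respect to arbitrary $\ndi_k$-morphisms, and the pentagon and triangle axioms, then follow from the corresponding identities in $\Mm_k$ together with the compatibility of $\kappa$ with the unital extensions $\overline{(-)}$ established above; concretely, for an algebra isomorphism $\phi$ one has $\overline{\iota_Y\circ\phi}=\MM(\phi)$, so composition in $\ndi_k$ of constraint morphisms reduces to composition of the underlying isomorphisms in $\Mm_k$, where coherence is known.
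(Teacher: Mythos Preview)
Your proof is correct and follows essentially the same approach as the paper. The paper phrases everything in the language of extensions and defines the tensor of morphisms directly via \leref{tensor}, leaving bifunctoriality, the constraints, and coherence to the reader; you instead work through the equivalent algebra-map description from \thref{charext} (your $\kappa_{A,B}$ is the paper's $\Psi_{A,B}$, and your formula $\kappa_{A,C}\circ(f\ot g)$ is exactly what the paper records as $\widetilde{f\ot f'}=\Psi_{A,A'}\circ(\widetilde f\ot\widetilde{f'})$ in the Notation block following the proof) and supply the bifunctoriality verification explicitly.
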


\begin{proof}
\mbox{  }
\noindent
Consider two objects $A$ and $B$ in $\ndi_k$. A morphism $f$ from $B$ to $A$ in $\ndi_k$, that is a (non-degenerate) idempotent ($k$-projective) extension $A$ of $B$, will be denoted by
$f:B\mor A$. The vertical bar in the middle of the arrow reminds us to the fact that $f$ is not an actual map.\\
Consider two morphisms $f:B\mor A$ and $g:A\mor R$. Since we have a functor $F:A\hbox{-}\Ext\to B\hbox{-}\Ext$ as in \thref{charext} (iv), and we have that $R\in  A\hbox{-}\Ext$, it follows that $R=F(R)\in  B\hbox{-}\Ext$. We define the composition $g\circ f:B\mor R$ as this (non-degenerate) idempotent ($k$-projective) extension $R$ of $B$.
The identity morphism of $A$ is the trivial non-degenerate idempotent ($k$-projective) extension $A$ of $A$. 
It will be denoted by $A:A\mor A$.\\
Given two objects $A,B\in \ndi_k$, a similar computation as in the proof of \leref{tensor} shows that $A\ot B$ with multiplication as defined in \equref{tensoralgebra} is again an object in $\ndi_k$.
Consider morphisms $f:B\mor A$ and $f':B'\mor A'$ in $\ndi_k$. From \leref{tensor} it follows that $A\ot A'$ is a (non-degenerate) idempotent ($k$-projective) extension of $B\ot B'$. We define $$f\ot f':B\ot B'\mor A\ot A'$$ as this extension.
We leave the other verifications for $\ndi_k$ to be a monoidal category (associativity and unit constraints and coherence conditions) to the reader.
\end{proof}

\begin{notation}
For a morphism $f:B\mor A$ in $\ndi_k$, i.e. a non-degenerate idempotent ($k$-projective) $B$-extension $A$, we denote by $\widetilde{f}:B\to \MM(A)$ the unique algebra map we can associate to it such that $A$ is a non-degenerate idempotent ($k$-projective) $B$-bimodule with $B$-actions induced by $\widetilde{f}$, and by $\ol{f}:\MM(B)\to \MM(A)$ the unique extension of $\widetilde {f}$ to $\MM(B)$ (see \thref{charext}). 
For the identity morphism $A:A\mor A$ on an object $A\in \ndi_k$ we then have that $\widetilde{A}=\iota_A$ and $\ol{A}=\MM(A)$. Note that because of the fact that $A\in \ndi_k$, the algebra map $\iota_A$ indeed induces a non-degenerate idempotent ($k$-projective) $A$-bimodule structure on $A$.
The composition $g\circ f:B\mor R$ of another morphism $g:A\mor R$ in $\ndi_k$ with $f$ is characterized by the algebra map $\widetilde{g\circ f}=\ol{g}\circ \widetilde{f}:B\to \MM(R)$ and its unique extension $\ol{g\circ f}=\ol{g}\circ \ol{f}:\MM(B)\to \MM(R)$ to $\MM(B)$.
Given another morphism $f':B'\mor A'$ in $\ndi_k$, we have for the tensor product $f\ot f':B\ot B'\mor A\ot A'$ in $\ndi_k$ that $\widetilde{f\ot f'}=\Psi_{A,A'}\circ (\widetilde{f}\ot \widetilde{f'}):B\ot B'\to \MM(A\ot A')$, where $\Psi_{A,A'}:\MM(A)\ot \MM(A')\to \MM(A\ot A')$ is the canonical inclusion.
\end{notation}

\begin{remark}
The category $\ndi_k$ can now be thought of as the category whose objects are non-degenerate idempotent $k$-projective algebras, and whose morphisms between two objects $B$ and $A$ are algebra maps $\widetilde{f}:B\to \MM(A)$ that induce a non-degenerate idempotent ($k$-projective) $B$-bimodule structure on $A$.
\end{remark}
\begin{theorem}
We have the following diagram of monoidal functors.
\[
\xymatrix{\ndi_k \ar[rr]^-\MM  && \Alg_k^u 
\ar[rr]^-U && \Mm_k}
\]
Here $\Alg_k^u$ denotes the category of unital $k$-algebras, and $U$ is a forgetful functor.
Moreover, if $k$ is a field, then we have an adjoint pair $(\MM,U_\MM)$ of functors, where $U_\MM: \Alg_k^u\to \ndi_k$ is a forgetful functor that is fully faithful.

\end{theorem}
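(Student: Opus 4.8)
The plan is to dispatch the three assertions---functoriality and monoidality of $\MM$, monoidality of $U$, and the adjunction when $k$ is a field---in turn, leaning on \thref{charext} and on the Notation paragraph preceding the statement. I would first set up $\MM$ as a functor: on objects $\MM(A)$ is a unital $k$-algebra by \prref{multialg}(i), hence lies in $\Alg_k^u$, and on a morphism $f:B\mor A$ of $\ndi_k$ I put $\MM(f)=\ol f:\MM(B)\to\MM(A)$, the unique unital extension of $\widetilde f$ furnished by the last statement of \thref{charext}. Functoriality is then exactly the two identities recorded in the Notation paragraph, $\ol A=\MM(A)$ and $\ol{g\circ f}=\ol g\circ\ol f$, both consequences of the uniqueness of the extension. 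For the monoidal structure I would take the unit object of $\ndi_k$ to be $k$ (unital, hence non-degenerate idempotent, and $k$-projective), so that $\MM(k)\cong k$ supplies the unit coherence map, and the canonical inclusions $\Psi_{A,B}:\MM(A)\ot\MM(B)\to\MM(A\ot B)$, sending $\big((\lambda,\rho),(\lambda',\rho')\big)$ to $(\lambda\ot\lambda',\rho\ot\rho')$, as the tensor coherence maps; that these are well-defined algebra morphisms uses only the compatibility \equref{compatlambdarho} on $A\ot B$. Naturality of $\Psi$ is precisely the identity $\widetilde{f\ot f'}=\Psi_{A,A'}\circ(\widetilde f\ot\widetilde{f'})$ from the Notation paragraph together with uniqueness of extensions, and the associativity and unit coherence conditions for $\Psi$ are routine (I would leave them to the reader); since $\Psi$ is generally not surjective, $\MM$ is lax monoidal. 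Finally $U:\Alg_k^u\to\Mm_k$ is strictly monoidal, as the tensor product and unit of $\Alg_k^u$ are computed on underlying $k$-modules, so the displayed composite is a diagram of monoidal functors.

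Now suppose $k$ is a field. Then every unital $k$-algebra is non-degenerate, idempotent and $k$-projective, hence an object of $\ndi_k$, and by \exref{extension} a unital algebra map $g:C\to D$ turns $D$ into an idempotent $C$-extension; this defines $U_\MM$. To prove $U_\MM$ fully faithful I would show that for unital $C,D$ the morphisms $C\mor D$ of $\ndi_k$ are exactly the unital algebra maps $C\to D$. Such a morphism is an algebra map $\widetilde f:C\to\MM(D)\cong D$ making $D$ a non-degenerate idempotent $C$-bimodule; writing $e=\widetilde f(1_C)$, the idempotency $\widetilde f(C)D=D$ combined with $\widetilde f(c)=e\widetilde f(c)$ shows that left multiplication by $e$ on $D$ is a surjective idempotent, hence the identity, so $e=1_D$ and $\widetilde f$ is unital. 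Thus $U_\MM$ induces a bijection $\Hom_{\Alg_k^u}(C,D)\cong\Hom_{\ndi_k}(C,D)$.

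The adjunction I would establish through a natural bijection
\[
\Hom_{\ndi_k}(U_\MM(C),A)\;\cong\;\Hom_{\Alg_k^u}(C,\MM(A)),
\]
natural in $C\in\Alg_k^u$ and $A\in\ndi_k$, exhibiting $U_\MM$ as the left adjoint and $\MM$ as the right adjoint. The left-hand side consists of algebra maps $\widetilde g:C\to\MM(A)$ making $A$ a non-degenerate idempotent $C$-bimodule, and the crux is that---because $C$ is unital---these coincide with the unital algebra maps $C\to\MM(A)$. In one direction a unital $\widetilde g$ has $\widetilde g(1_C)=1_{\MM(A)}$ acting as the identity on $A$, so $\widetilde g(C)\lactd A=A$ and the non-degeneracy is automatic; conversely, idempotency forces the idempotent multiplier $e=\widetilde g(1_C)$ to act on $A$ by a surjective idempotent, hence by the identity, so that $e$ and $1_{\MM(A)}$ have the same left part, and then \equref{compatlambdarho} together with the non-degeneracy of $A$ (a multiplier whose left part vanishes is zero) forces $e=1_{\MM(A)}$. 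This identifies the two Hom-sets; naturality in both variables I would verify against the functoriality of $\MM$ and $U_\MM$. I would close by recording the unit $\iota_C:C\xrightarrow{\sim}\MM(C)$ and the counit $\MM(A)\mor A$ given by the canonical $\MM(A)$-extension structure on $A$ of \prref{multialg}(v).

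The step I expect to be the main obstacle is pinning down the correct direction of the adjunction, and with it the equivalence ``unital map $\Leftrightarrow$ idempotency-inducing map''. It is tempting to instead attempt the opposite adjunction by restricting a unital map $\MM(A)\to C$ along $\iota_A$, but this breaks down: such a map can annihilate the ideal $\iota_A(A)$ (possible precisely when $A$ is non-unital, by \prref{multialg}(vi)), yielding a zero restriction that induces no idempotent structure. Seeing that $U_\MM$, not $\MM$, is the left adjoint---so that $\MM$ realizes $\Alg_k^u$ as a coreflective full subcategory of $\ndi_k$---is the conceptual point; once the right-hand Hom-set is described through $\MM(A)$, the proof reduces to the unital/idempotent equivalence above and routine naturality.
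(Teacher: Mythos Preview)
Your treatment of the functor $\MM$ and its lax monoidal structure via the maps $\Psi_{A,B}$ agrees with the paper's; both rely on the Notation paragraph and the uniqueness clause of \thref{charext}.

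For the adjunction, however, you and the paper set things up in \emph{opposite} directions. The paper reads ``adjoint pair $(\MM,U_\MM)$'' as $\MM\dashv U_\MM$: its unit is $\eta_A=\iota_A:A\to \MM(A)$ for $A\in\ndi_k$ and its counit is the identity $\MM(R)=R\to R$ for $R\in\Alg_k^u$, and it deduces full faithfulness of the right adjoint $U_\MM$ from the counit being an isomorphism. You instead prove $U_\MM\dashv \MM$ via the Hom-set bijection $\Hom_{\ndi_k}(U_\MM(C),A)\cong\Hom_{\Alg_k^u}(C,\MM(A))$, with unit $\iota_C:C\xrightarrow{\sim}\MM(C)$ in $\Alg_k^u$ and counit the canonical extension $\MM(A)\mor A$ of \prref{multialg}(v) in $\ndi_k$; full faithfulness of the left adjoint then follows from the unit being an isomorphism.

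Your direction is the correct one, and your final paragraph already identifies the obstruction to the paper's. Concretely, for the paper's unit $\eta_A=\iota_A$ to be a morphism $A\mor\MM(A)$ in $\ndi_k$, the algebra $\MM(A)$ would have to be an \emph{idempotent} $A$-bimodule via $\iota_A$; but by \equref{AbimMA} one has $a\lact x=\iota_A(\rho_x(a))\in\iota_A(A)$, so $A\cdot\MM(A)\subseteq\iota_A(A)$, and equality with $\MM(A)$ forces $A$ to be unital. Equivalently, the map $\Hom_{\Alg_k^u}(\MM(A),C)\to\Hom_{\ndi_k}(A,C)$ given by restriction along $\iota_A$ need not land in idempotency-inducing maps (your example of a unital character killing the ideal $\iota_A(A)$ makes this explicit). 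Your identification of $\Hom_{\ndi_k}(C,A)$ with the \emph{unital} algebra maps $C\to\MM(A)$, via the surjective-idempotent argument for $e=\widetilde g(1_C)$, is the right replacement and also yields your direct proof that $U_\MM$ is fully faithful. So your argument is not merely a different route: it repairs a genuine gap in the paper's sketch of the adjunction.
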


\begin{proof}
The functor $\MM$ is defined as follows. For a non-degenerate idempotent $k$-projective algebra $A$, $\MM(A)$ is the multiplier algebra. If $f:B\mor A$ is a non-degenerate idempotent ($k$-projective) $B$-extension, then we know by \thref{charext} that this is equivalent to the (unique) existence of an algebra map $\widetilde{f}:B\to \MM(A)$ that induces a non-degenerate idempotent ($k$-projective) $B$-bimodule structure on $A$, which can moreover be extended to a unital algebra morphism $\ol{f} : \MM(B)\to \MM(A)$. We define $\MM(f)=\ol{f}$. Clearly $\MM$ is a functor.

If $k$ is a field, then any $k$-module is $k$-projective and one can consider the forgetful functor 
$U_\MM: \Alg_k^u\to \ndi_k$.
To see that $(\MM,U_\MM)$ is an adjoint pair, we define the unit $\eta$ and counit $\epsilon$ of the adjunction. For any $A\in \ndi_k$ and $R\in \Alg_k^u$, we define 
$$\eta_A=\iota_A : A\to \MM(A), \qquad \epsilon_R = R : \MM(R)  =R \to R.$$
Since the multiplier algebra of a unital algebra is the original algebra itself, the counit is the identity natural transformation. Hence $U_\MM$ is a fully faithful functor.
Moreover, there is a natural transformation
$$\Psi_{A,B}:\MM(A)\ot \MM(B)\to \MM(A\ot B),$$
which implies that $\MM$ is indeed a monoidal functor.
\end{proof}

\section{Multiplier bialgebras}\selabel{multibi}

\subsection{Multiplier bialgebras} 

\begin{Definition}
A \emph{multiplier $k$-bialgebra} is a comonoid in the monoidal category $\ndi_k$.
\end{Definition}

Let us spend some time on restating this definition in a more explicit way. 
By definition a multiplier bialgebra is a triple $A=(A,\Delta,\varepsilon)$
consisting of a non-degenerate idempotent $k$-projective algebra $A$ and morphisms $\Delta:A\mor A\ot A$ and $\varepsilon:A\mor k$ in $\ndi_k$, such that 
$$(\Delta \ot A)\circ \Delta= (A\ot \Delta)\circ \Delta$$ 
as morphisms from $A$ to $A\ot A\ot A$ in $\ndi_k$ and 
$$(\varepsilon \ot A)\circ \Delta= A = (A\ot \varepsilon)\circ \Delta$$
as morphisms from $A$ to $A$ in $\ndi_k$.
Making use of the algebra maps
$\widetilde{\Delta}:A\to \MM(A\ot A)$ and $\widetilde{\varepsilon}:A\to \MM(k)=k$
these two conditions can be expressed as
\begin{equation}\eqlabel{coass}
\ol{\Delta\ot A}\circ\widetilde{\Delta}=\ol{A\ot\Delta}\circ\widetilde{\Delta} 
\end{equation}
and
\begin{equation}\eqlabel{counit}
\ol{\varepsilon\ot A}\circ \widetilde{\Delta}=\iota_A=\ol{A\ot \varepsilon}\circ \widetilde{\Delta}. 
\end{equation}
Using, for all $b,b'\in A$, the notation $b=\sum b^1b^2$ and $b\ot b'=\sum \widetilde\Delta(\ul{b})\lactd (\ul{b\ot b'})=\sum (\ol{b\ot b'}) \ractd \widetilde\Delta(\ol{b})$ to express the idempotency of $B$ and of the $A$-extension $\Delta$, we can rephrase the coassociativity condition \equref{coass} as
\begin{equation}\eqlabel{coass3}
\sum \widetilde{\Delta\ot A} \big(\widetilde\Delta(a)\lactd  (\ul{b}\ot{b''}^{1}) \big) \lactd (\ul{b\ot b'}\ot {b''}^{2})
=\sum\widetilde{A\ot\Delta} \big(\widetilde\Delta(a)\lactd  (b^1\ot\ul{b'}) \big) \lactd (b^2\ot \ul{b'\ot b''})
\end{equation}
and
\begin{equation}\eqlabel{coass2}
\sum(\ol{b\ot b'}\ot {b''}^{1})\ractd \widetilde{\Delta\ot A}\big((\ol{b}\ot {b''}^{2})\ractd \widetilde\Delta(a)\big)
=\sum(b^1\ot \ol{b'\ot b''})\ractd \widetilde{A\ot\Delta} \big((b^2\ot\ol{b'}) \ractd\widetilde\Delta(a)   \big), 
\end{equation}
where $b\ot b'\ot b''\in A\ot A\ot A$ and $a\in A$.
The left counit condition, i.e. the first equality of \equref{counit}, can also be read as
\begin{equation}\eqlabel{counit2}
\ol{\varepsilon\ot A}\big(\widetilde{\Delta}(a)\big)\lactd b=ab, \qquad b\ractd \ol{\varepsilon\ot A}\big(\widetilde{\Delta}(a)\big)=ba,
\end{equation}
for all $a,b\in A$. This formula can be made even more explicit. Remark first that the fact that the $A$-extension $\varepsilon:A\mor k$ is idempotent means exactly that $\widetilde\varepsilon:A\to \MM(k)=k$ is surjective, i.e. there exists an element $g\in A$ such that $\widetilde\varepsilon(g)=1_k$.  
For any $b\in A$, we can write $b=\sum \widetilde\varepsilon(g)b^1b^2=\sum \widetilde{\varepsilon\ot A}(g\ot b^1)\lactd b^2=\sum b^1\ractd \widetilde{A\ot\varepsilon}(b^2\ot g)$, the last two sums being two ways to express $b$ due to the idempotency of the extension $\varepsilon \ot A$. So \equref{counit2} becomes
\begin{equation}\eqlabel{counit3}
\widetilde{\varepsilon\ot A}\big(  \widetilde{\Delta}(a)\lactd (g\ot b^1) \big) \lactd b^2 = ab, \qquad
b^1 \ractd \widetilde{A\ot \varepsilon}\big( (b^2\ot g) \ractd   \widetilde{\Delta}(a)  \big)  = ba.
\end{equation}

Our definition of multiplier bialgebra is closely related to the one introduced by Van Daele in \cite{VD:Mult}. We will make this relationship more explicit in \seref{multihopf}, but we already show in the next Propositions that our notions of coassociativity and counitality coincide with those of \cite{VD:Mult}.
Before we state and prove these, we first introduce some notation and prove a Lemma. 

Given algebras $A_1,\ldots,A_n$, consider the multiplier algebras $\MM(A_1),\ldots,\MM(A_n)$ and $\MM(A_1\ot \cdots \ot A_n)$. Denote by $\Psi_{A_1,\ldots,A_n}:\MM(A_1)\ot \cdots \ot \MM(A_n) \to \MM(A_1\ot \cdots \ot A_n)$ the linear map defined by
\begin{eqnarray*}
\Psi_{A_1,\ldots,A_n}(x_1\ot \cdots \ot x_n)\lactd (a_1\ot \cdots \ot a_n)&=& (x_1\lactd a_1)\ot \cdots \ot (x_n\lactd a_n),\\
(a_1\ot \cdots \ot a_n)\ractd \Psi_{A_1,\ldots,A_n}(x_1\ot \cdots \ot x_n)&=& (a_1\ractd x_1) \ot \cdots \ot (a_n\ractd x_n),
\end{eqnarray*}
where $a_i\in A_i$ and $x_i\in \MM(A_i)$, for all $i=1,\ldots,n$. 
A multiplier of the form $\Psi_{A_1,\ldots,A_n}(1\ot\cdots \ot 1\ot \iota_{A_i}(a_i)\ot 1\ot \cdots \ot 1)\in \MM(A_1\ot \cdots \ot A_n)$ will be denoted shortly by $1\ot\cdots \ot 1\ot a_i\ot 1\ot \cdots \ot 1$, where $\iota_{A_i}(a_i)$ and $a_i$ appear in the $i$th tensorand.
\begin{lemma}\lelabel{help}
Let $f:A\mor B$ be a morphism in $\ndi_k$, $C\in \ndi_k$ and $c\in C$.\\
For any $x\in \MM(A\ot C)$ we have the following equality in $\MM(B\ot C)$.
\begin{eqnarray}
\ol{f\ot C}(x(1\ot c))&=& \ol{f\ot C}(x)(1\ot c)\eqlabel{help1}
\end{eqnarray}
For any $x\in \MM(C\ot A)$ we have the following equality in $\MM(C\ot B)$.
\begin{eqnarray}
\ol{C\ot f}((c\ot 1)x)&=& (c\ot 1)\ol{C\ot f}(x)\eqlabel{help2}
\end{eqnarray}
\end{lemma}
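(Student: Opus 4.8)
The plan is to exploit that, by \thref{charext}, both $\ol{f\ot C}:\MM(A\ot C)\to\MM(B\ot C)$ and $\ol{C\ot f}:\MM(C\ot A)\to\MM(C\ot B)$ are \emph{unital algebra} morphisms. This multiplicativity reduces each claimed identity to a single special value. Indeed, for \equref{help1} I would write
\[
\ol{f\ot C}(x(1\ot c)) = \ol{f\ot C}(x)\,\ol{f\ot C}(1\ot c),
\]
so that the statement follows as soon as one shows $\ol{f\ot C}(1\ot c)=1\ot c$, where on the source side $1\ot c=\Psi_{A,C}(1_{\MM(A)}\ot\iota_C(c))\in\MM(A\ot C)$ and on the target side $1\ot c=\Psi_{B,C}(1_{\MM(B)}\ot\iota_C(c))\in\MM(B\ot C)$. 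Symmetrically, \equref{help2} reduces to proving $\ol{C\ot f}(c\ot 1)=c\ot 1$.

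Next I would establish $\ol{f\ot C}(1\ot c)=1\ot c$ by comparing left actions ($\lactd$) on an arbitrary element $b\ot c'\in B\ot C$. Since $B\ot C\in\ndi_k$ by \leref{tensor}, it is non-degenerate, and a multiplier over a non-degenerate algebra is determined by its left action alone (if $a\lambda(w)=a\lambda'(w)$ for all $a,w$, then $\rho=\rho'$ follows from the compatibility relation \equref{compatlambdarho} and non-degeneracy); hence it suffices to check $\ol{f\ot C}(1\ot c)\lactd(b\ot c')=b\ot cc'$. Using the idempotency of $B$ as a left $A$-module and of $C$ as an algebra, I decompose $b\ot c'=\sum_{i,j}(a_i\ot c''_j)\lactd(b_i\ot c'_j)$ inside the $(A\ot C)$-extension $B\ot C$. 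Feeding this into the defining formula of \thref{charext} gives
\[
\ol{f\ot C}(1\ot c)\lactd(b\ot c') = \sum_{i,j}\widetilde{f\ot C}\big((1\ot c)\lactd(a_i\ot c''_j)\big)\lactd(b_i\ot c'_j),
\]
and then using $(1\ot c)\lactd(a_i\ot c''_j)=a_i\ot c\,c''_j$ together with $\widetilde{f\ot C}=\Psi_{B,C}\circ(\widetilde f\ot\iota_C)$, the right-hand side collapses (after resumming the two idempotency decompositions, i.e. $c'=\sum_j c''_j c'_j$ and $b=\sum_i a_i\cdot b_i$) to $b\ot cc'$, as wanted. Non-degeneracy then yields the reduced identity.

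Finally, the second reduced identity $\ol{C\ot f}(c\ot 1)=c\ot 1$ is proved in the mirror-image fashion: now I compare right actions ($\ractd$) on an arbitrary $c''\ot b\in C\ot B$, invoke the right-action formula of \thref{charext}, and expand $c''\ot b$ through the idempotencies $C=C^2$ and $B=BA$; a multiplier over the non-degenerate algebra $C\ot B$ is equally determined by its right action, so this suffices. The only real obstacle is the bookkeeping: one must carry both idempotency decompositions through $\Psi$ and $\widetilde f$ and keep straight which copy of ``$1\ot c$'' lives in which multiplier algebra. No genuinely new idea beyond multiplicativity of $\ol{(-)}$ and non-degeneracy is required.
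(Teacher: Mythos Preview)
Your proposal is correct. The underlying computation is the same as the paper's, but you organize it differently: the paper carries the general $x$ through the whole calculation, directly expanding $\ol{f\ot C}(x(1\ot c))\lactd(b\ot d)$ via the defining formula of $\ol{(-)}$ from \thref{charext} (using $b=\sum_i\widetilde f(a_i)\lactd b_i$ and $d=\sum d^1d^2$) and observing that $(x(1\ot c))\lactd(a_i\ot d^1)=x\lactd(a_i\ot cd^1)$ so that the result is $\ol{f\ot C}(x)\lactd(b\ot cd)$. You instead invoke first that $\ol{f\ot C}$ is a unital algebra map (the last statement of \thref{charext}) to factor $\ol{f\ot C}(x(1\ot c))=\ol{f\ot C}(x)\,\ol{f\ot C}(1\ot c)$, reducing to the single identity $\ol{f\ot C}(1\ot c)=1\ot c$, which you then verify by exactly the same kind of expansion. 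Your additional remark that over a non-degenerate algebra a multiplier is determined by its left component (so one need not repeat the right-multiplier check) is valid and a minor streamlining; the paper simply says the right-multiplier case is analogous. Either way the content is the same; your route just isolates the multiplicativity of $\ol{(-)}$ as the conceptual reason the lemma holds.
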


\begin{proof}
We prove the first equality, by proving the equality of the left multipliers of both sides; the proof for the right multipliers is completely analogous. For all $b\in B$ and $c,d\in C$ we have that
\begin{eqnarray*}
&&\hspace{-1cm}
\ol{f\ot C}(x(1\ot c))\lactd (b\ot d)
= \sum_{i} \widetilde{f \ot C}((x(1\ot c))\lactd (a_i\ot d^1))\lactd (b_i \ot d^2)\\
&=& \sum_{i} \widetilde{f \ot C}(x\lactd (a_i\ot cd^1))\lactd (b_i\ot d^2)= \ol{f\ot C}(x)\lactd (b \ot cd)\\
&=& \big( \ol{f\ot C}(x)(1\ot c)\big)\lactd(b\ot d),
\end{eqnarray*}
where $b=\sum_i\widetilde{f}(a_i)\lactd b_i\in f(A)\lactd B$ and $d=\sum d^1d^2\in C^2$, and thus also $cd=\sum (cd^1)d^2$.
\end{proof}

\begin{proposition}\prlabel{coass} 
A morphism $\Delta:A\mor A\ot A$ in $\ndi_k$ is coassociative in the sense of \equref{coass} if and only if $\widetilde{\Delta}:A\to \MM(A\ot A)$ is coassociative in the sense of \cite{VD:Mult}, i.e. 
\begin{eqnarray}\eqlabel{coassfonzie}
(a\ot 1\ot 1)\ol{\Delta \ot A}(\widetilde{\Delta}(b)(1\ot c))
&=&  \ol{A\ot \Delta}((a\ot 1)\widetilde{\Delta}(b))(1\ot 1\ot c),
\end{eqnarray}
for all $a,b,c\in A$.
\end{proposition}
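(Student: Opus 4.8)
The plan is to massage Van Daele's condition \equref{coassfonzie} so that the two multipliers $\ol{\Delta\ot A}(\widetilde\Delta(b))$ and $\ol{A\ot\Delta}(\widetilde\Delta(b))$ appearing in \equref{coass} become visible, sandwiched between the ``dummy'' multipliers $a\ot 1\ot 1$ and $1\ot 1\ot c$, and then to remove these dummies by a cancellation argument based on the non-degeneracy and idempotency of $A\ot A\ot A$.

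First I would apply \leref{help}. Taking $f=\Delta\colon A\mor A\ot A$ and $C=A$, equation \equref{help1} with $x=\widetilde\Delta(b)$ gives $\ol{\Delta\ot A}(\widetilde\Delta(b)(1\ot c))=\ol{\Delta\ot A}(\widetilde\Delta(b))(1\ot 1\ot c)$, while \equref{help2} gives $\ol{A\ot\Delta}((a\ot1)\widetilde\Delta(b))=(a\ot 1\ot 1)\ol{A\ot\Delta}(\widetilde\Delta(b))$. Substituting both into \equref{coassfonzie}, the Van Daele condition becomes, for all $a,b,c\in A$,
\[
(a\ot 1\ot 1)\,\ol{\Delta\ot A}(\widetilde\Delta(b))\,(1\ot 1\ot c)=(a\ot 1\ot 1)\,\ol{A\ot\Delta}(\widetilde\Delta(b))\,(1\ot 1\ot c),
\]
an equality of products in $\MM(A\ot A\ot A)$. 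The forward implication is then immediate: if \equref{coass} holds then $\ol{\Delta\ot A}(\widetilde\Delta(b))=\ol{A\ot\Delta}(\widetilde\Delta(b))$ for every $b$, and the two sides above coincide.

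For the converse I would isolate the cancellation statement: if $P,Q\in\MM(A\ot A\ot A)$ satisfy $(a\ot1\ot1)P(1\ot1\ot c)=(a\ot1\ot1)Q(1\ot1\ot c)$ for all $a,c\in A$, then $P=Q$; applying this with $P=\ol{\Delta\ot A}(\widetilde\Delta(b))$ and $Q=\ol{A\ot\Delta}(\widetilde\Delta(b))$ for each $b$ recovers \equref{coass}. The two ingredients are that $A\ot A\ot A$ is a non-degenerate idempotent $k$-projective algebra, hence lies in $\ndi_k$ by iterating \leref{tensor}, and the elementary fact that over a non-degenerate algebra a multiplier is determined by its left multiplier alone (from the compatibility $u\lambda(v)=\rho(u)v$, equal left multipliers force equal right multipliers via non-degeneracy). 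I would cancel the two dummies in turn. To strip off $a\ot1\ot1$, fix $c$, set $X=P(1\ot1\ot c)$ and $Y=Q(1\ot1\ot c)$; using idempotency of the first tensorand write an arbitrary $u$ as $\sum_j u_j\ractd(a_j\ot1\ot1)$, so that $u\ractd X=\sum_j u_j\ractd((a_j\ot1\ot1)X)=\sum_j u_j\ractd((a_j\ot1\ot1)Y)=u\ractd Y$, whence $\rho_X=\rho_Y$ and $X=Y$; that is, $P(1\ot1\ot c)=Q(1\ot1\ot c)$ for all $c$. To strip off $1\ot1\ot c$, use idempotency of the third tensorand to write an arbitrary $v$ as $\sum_j(1\ot1\ot c_j)\lactd v_j$, so that $P\lactd v=\sum_j(P(1\ot1\ot c_j))\lactd v_j=\sum_j(Q(1\ot1\ot c_j))\lactd v_j=Q\lactd v$, giving $\lambda_P=\lambda_Q$ and hence $P=Q$.

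The routine verifications that remain are the bookkeeping in \leref{help} (that the symbols $1\ot c$ and $a\ot1$ reappear as $1\ot1\ot c$ and $a\ot1\ot1$ after applying $\ol{\Delta\ot A}$ and $\ol{A\ot\Delta}$) and the two multiplier identities $u\ractd(ZX)=(u\ractd Z)\ractd X$ and $(PZ')\lactd v=P\lactd(Z'\lactd v)$, both immediate from the multiplication formula \equref{xy} in \prref{multialg}. The main obstacle is the cancellation step: one must resist trying to cancel the dummies ``elementwise'' — they are genuine multipliers, not invertible ring elements — and instead exploit idempotency to represent every element of $A\ot A\ot A$ \emph{through} the very multipliers $a\ot1\ot1$ and $1\ot1\ot c$, thereby converting the hypothesis into the equality of, respectively, the right and the left multipliers of $P$ and $Q$.
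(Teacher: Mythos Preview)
Your proposal is correct and follows essentially the same route as the paper: both directions hinge on \leref{help} to pull the dummies $a\ot1\ot1$ and $1\ot1\ot c$ outside of $\ol{\Delta\ot A}$ and $\ol{A\ot\Delta}$, after which the forward implication is immediate and the converse is a cancellation argument based on non-degeneracy and idempotency of $A\ot A\ot A$. The only organizational difference is that you isolate a clean two-step cancellation lemma (first strip $a\ot1\ot1$ via equality of right multipliers, then strip $1\ot1\ot c$ via equality of left multipliers), whereas the paper works entirely on the left-multiplier side, testing $P\lactd(b\ot c\ot d)$ after premultiplying by $a'\ot1\ot1$ and using idempotency $d=\sum d^1d^2$ to insert $1\ot1\ot d^1$; both arrive at the same conclusion by the same mechanism.
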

\begin{proof}
Suppose that \equref{coass} holds, then
\begin{eqnarray*}
&&\hspace{-1.5cm}
(a\ot 1\ot 1)\ol{\Delta \ot A}(\widetilde{\Delta}(b)(1\ot c))
\equal{\equref{help1}}
(a\ot 1\ot 1)\big(\ol{\Delta \ot A}(\widetilde{\Delta}(b))(1\ot 1\ot c)\big)\\
&=&\big( (a\ot 1\ot 1)\ol{A\ot \Delta}(\widetilde{\Delta}(b))\big) (1\ot 1\ot c) \equal{\equref{help2}}
 \ol{A\ot \Delta}((a\ot 1)\widetilde{\Delta}(b))(1\ot 1\ot c).
\end{eqnarray*}
Conversely, suppose that \equref{coassfonzie} holds. It is easy to check (with methods similar to the ones in the proof of \leref{tensor}) that, by the non-degeneracy of $A\ot A\ot A$, for any $x\in A\ot A\ot A$ we have that $x=0$ if $(a\ot 1\ot 1)\lactd x=0$, for all $a\in A$.
So, to prove that the left multipliers of $\ol{ \Delta\ot A}(\widetilde{\Delta}(a))$ and $\ol{A\ot \Delta}(\widetilde{\Delta}(a))$ are equal for all $a\in A$, it suffices to prove that, for all $a',b,c,d\in A$, we have that 
\begin{eqnarray*}
&&\hspace{-2.5cm}
\big((a'\ot 1\ot 1) \ol{ \Delta\ot A}(\widetilde{\Delta}(a))\big)\lactd (b\ot c\ot d) \\
&=&
\sum\big( (a'\ot 1\ot 1) \ol{\Delta \ot A}(\widetilde{\Delta}(a)) (1\ot 1\ot d^1) \big)\lactd (b\ot c\ot d^2)\\
&\equal{\equref{help1}}&  
\sum\big( (a'\ot 1\ot 1)\ol{\Delta \ot A}(\widetilde{\Delta}(a)(1\ot d^1)) \big)\lactd (b\ot c\ot d^2)
\end{eqnarray*}
equals 
\begin{eqnarray*}
&&\hspace{-2cm}
\big((a'\ot 1\ot 1) \ol{A\ot \Delta}(\widetilde{\Delta}(a))\big)\lactd (b\ot c\ot d)
\equal{\equref{help2}} \ol{A\ot \Delta}((a'\ot 1)\widetilde{\Delta}(a))\lactd (b\ot c\ot d)\\
&=& \sum\big( \ol{A\ot \Delta}((a'\ot 1)\widetilde{\Delta}(a))(1\ot 1\ot d^1) \big)\lactd (b\ot c\ot d^2),
\end{eqnarray*}
and this is the case by \equref{coassfonzie}. Here we used again the notation $d=\sum d^1d^2\in A^2$.
\end{proof}
\begin{proposition}\prlabel{counit}
A morphism $\Delta:A\mor A\ot A$ in $\ndi_k$ is counital in the sense of \equref{counit} if and only if $\widetilde{\Delta}:A\to \MM(A\ot A)$ is counital in the sense of \cite{VD:Mult}, i.e. 
\begin{eqnarray}\eqlabel{counitfonzie}
\ol{\varepsilon \ot A}(\widetilde{\Delta}(a)(1\ot b))=\iota_A(ab)= 
\ol{A\ot \varepsilon}((a\ot 1)\widetilde{\Delta}(b)),
\end{eqnarray}
for all $a,b\in A$.
\end{proposition}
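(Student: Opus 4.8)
The plan is to mirror the proof of \prref{coass}, treating the two halves of the counit condition \equref{counit} separately. Since the statement for $\ol{\varepsilon\ot A}$ (the left condition) and for $\ol{A\ot\varepsilon}$ (the right condition) are completely symmetric, exchanging the roles of \equref{help1} and \equref{help2} and of left and right multiplication in $\MM(A\ot A)$, I would prove the left equivalence in detail and indicate that the right one follows by the same argument. Throughout I use the identifications $\MM(k\ot A)\cong\MM(A)\cong\MM(A\ot k)$ coming from the unit constraints, under which the multiplier $1\ot b\in\MM(A\ot A)$ is sent to $\iota_A(b)$ and $a\ot 1$ to $\iota_A(a)$, together with the fact (\prref{multialg}) that $\iota_A$ is an injective algebra map because $A$ is non-degenerate.

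For the forward implication, assume the left half of \equref{counit}, that is $\ol{\varepsilon\ot A}(\widetilde{\Delta}(a))=\iota_A(a)$ for all $a$. Applying \equref{help1} of \leref{help} with $f=\varepsilon\colon A\mor k$, $C=A$ and $c=b$ gives $\ol{\varepsilon\ot A}(\widetilde{\Delta}(a)(1\ot b))=\ol{\varepsilon\ot A}(\widetilde{\Delta}(a))(1\ot b)$; substituting the assumption and reading $1\ot b$ as $\iota_A(b)$ in $\MM(A)$, the right-hand side becomes $\iota_A(a)\iota_A(b)=\iota_A(ab)$, which is the left half of \equref{counitfonzie}. The right half is obtained identically from \equref{help2}.

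For the converse, assume the left half of \equref{counitfonzie}. To prove $\ol{\varepsilon\ot A}(\widetilde{\Delta}(a))=\iota_A(a)$ it suffices, by non-degeneracy of $A$, to check that both multipliers have the same left action on an arbitrary $b\in A$ (the right-multiplier component is then forced through the compatibility \equref{compatlambdarho}). Writing $b=\sum b^1b^2$ by idempotency, I first apply \equref{help1} to obtain $\ol{\varepsilon\ot A}(\widetilde{\Delta}(a))\iota_A(b^1)=\ol{\varepsilon\ot A}(\widetilde{\Delta}(a)(1\ot b^1))=\iota_A(ab^1)$ by hypothesis; since $A$ is a two-sided ideal in $\MM(A)$ (\prref{multialg}) this reads $\iota_A\bigl(\ol{\varepsilon\ot A}(\widetilde{\Delta}(a))\lactd b^1\bigr)=\iota_A(ab^1)$, and injectivity of $\iota_A$ yields $\ol{\varepsilon\ot A}(\widetilde{\Delta}(a))\lactd b^1=ab^1$. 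Reassembling via the right $A$-linearity of the left-multiplier component, $\ol{\varepsilon\ot A}(\widetilde{\Delta}(a))\lactd b=\sum\bigl(\ol{\varepsilon\ot A}(\widetilde{\Delta}(a))\lactd b^1\bigr)b^2=\sum(ab^1)b^2=ab=\iota_A(a)\lactd b$, as required. The right counit condition is handled symmetrically, using \equref{help2}, left $\MM(A)$-multiplication, and the left $A$-linearity of the right-multiplier component.

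The computations are all routine once this dictionary is in place; the only point that needs care is bookkeeping the unit-constraint identifications $\MM(k\ot A)\cong\MM(A)$ and the passage between multiplier multiplication $\iota_A(a)x$ and the module action $a\ractd x$ (through $A$ being an ideal), so that injectivity of $\iota_A$ can legitimately be invoked to strip it off. This is the same mild obstacle as in \prref{coass}, and beyond \leref{help} and idempotency no genuinely new idea is needed.
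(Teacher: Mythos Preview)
Your proof is correct and follows essentially the same route as the paper: both directions rest on \leref{help} together with the identification $\MM(k\ot A)\cong\MM(A)$ and the injectivity of $\iota_A$. The only cosmetic differences are that in the converse the paper postmultiplies $\ol{\varepsilon\ot A}(\widetilde{\Delta}(a))\lactd b$ by an arbitrary $a'$ and strips it via non-degeneracy (so your idempotent decomposition $b=\sum b^1b^2$ is an unnecessary detour—your argument already gives $\ol{\varepsilon\ot A}(\widetilde{\Delta}(a))\lactd b=ab$ directly for any $b$), and the paper checks the right multiplier by a symmetric computation rather than invoking your (valid) observation that it is forced by \equref{compatlambdarho}.
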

\begin{proof}
Suppose that \equref{counit} holds. We prove the first equality of \equref{counitfonzie}:
\begin{eqnarray*}
&&\hspace{-1.5cm}
\ol{\varepsilon \ot A}(\widetilde{\Delta}(a)(1\ot b))\equal{\equref{help1}} 
\ol{\varepsilon \ot A}(\widetilde{\Delta}(a))\iota_{A}(b)\equal{\equref{counit}} \iota_A(a)\iota_A(b)=\iota_A(ab),
\end{eqnarray*}
where we used that $\MM(k\ot A)\cong \MM(A)$ in the first equality.
Conversely, suppose that we are given \equref{counitfonzie}. Then for all  $a,a',b\in A$ we have 
\begin{eqnarray*}
&&\hspace{-1.5cm}
\big(\ol{\varepsilon \ot A}(\widetilde{\Delta}(a))\lactd b\big)a'
= \ol{\varepsilon \ot A}(\widetilde{\Delta}(a))\lactd (ba')
= \big(\ol{\varepsilon \ot A}(\widetilde{\Delta}(a))\iota_{A}(b)\big)\lactd a'\\
&\equal{\equref{help1}}& \ol{\varepsilon \ot A}(\widetilde{\Delta}(a)(1\ot b))\lactd a'
\equal{\equref{counitfonzie}} \iota_A(ab)\lactd a'= (ab)a'=(\iota_A(a)\lactd b)a',
\end{eqnarray*}
such that by the non-degeneracy of $A$ it follows that $\ol{\varepsilon \ot A}(\widetilde{\Delta}(a))\lactd b
= \iota_A(a)\lactd b$, for all $a,b\in A$. By a similar computation, one shows that the right multipliers of $\ol{\varepsilon\ot A}(\widetilde\Delta(a))$ and $\iota_A(a)$ are equal.
\end{proof}

\subsection{Monoidal structures on module categories}

In this Section we will show that a non-degenerate idempotent $k$-projective algebra $A$ is a multiplier bialgebra if and only if 
the category of its non-degenerate idempotent $k$-projective algebra extensions and both the categories of its non-degenerate idempotent $k$-projective left and right modules are monoidal and fit, together with the category of $k$-modules, into a diagram of strict monoidal forgetful functors (in the sense of \cite{McLane}). 

\begin{lemma}\lelabel{Amodtensor}
Let $A$ be a non-degenerate idempotent ($k$-projective) algebra and $\Delta:A\mor A\ot A$ a non-degenerate idempotent ($k$-projective) extension. For any two $M, N\in \Mm_A$, we have $M\ot N\in\Mm_A$ with right $A$-action defined by 
\begin{equation}\eqlabel{AactonMotN}
(m\ot n)\cdot a := \sum_{i,j} (m_i\ot n_j)((a_i\ot b_j)\ractd \widetilde{\Delta}(a)),
\end{equation}
for all $a\in A,m\in M,n\in N$ and where $m=\sum_i m_ia_i\in MA$ and $n=\sum_j n_jb_j\in NA$.
\end{lemma}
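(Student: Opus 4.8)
The key observation is that the formula \equref{AactonMotN} is nothing but the restriction, along the algebra map $\widetilde{\Delta}:A\to\MM(A\ot A)$, of a unital right $\MM(A\ot A)$-module structure on $M\ot N$. Indeed, by \leref{tensor} we have $M\ot N\in\Mm_{A\ot A}$, so \prref{multialg}~(vii) endows $M\ot N$ with a unital right $\MM(A\ot A)$-action $\ractd$; writing $m\ot n=\sum_{i,j}(m_i\ot n_j)(a_i\ot b_j)$ with $m=\sum_i m_ia_i$ and $n=\sum_j n_jb_j$, the defining formula of that action reads exactly $(m\ot n)\ractd\widetilde{\Delta}(a)=\sum_{i,j}(m_i\ot n_j)((a_i\ot b_j)\ractd\widetilde{\Delta}(a))$, which is the right-hand side of \equref{AactonMotN}. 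Hence I would simply set $(m\ot n)\cdot a:=(m\ot n)\ractd\widetilde{\Delta}(a)$ and extend by linearity. Well-definedness (independence of the chosen expansions of $m$ and $n$) is then part of \prref{multialg}~(vii), and associativity is automatic: since $\widetilde{\Delta}$ is an algebra map and $\ractd$ is a module action, $(p\cdot a)\cdot a'=p\ractd(\widetilde{\Delta}(a)\widetilde{\Delta}(a'))=p\ractd\widetilde{\Delta}(aa')=p\cdot(aa')$ for all $p\in M\ot N$. The $k$-projectivity of $M\ot N$ is clear.

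It remains to check idempotency and non-degeneracy of the right $A$-module $M\ot N$, and here is where the idempotency of the extension $\Delta$ enters, through its two equalities $\widetilde{\Delta}(A)\lactd(A\ot A)=A\ot A=(A\ot A)\ractd\widetilde{\Delta}(A)$. Throughout I would use the two elementary identities $\iota_{A\ot A}(u\ractd y)=\iota_{A\ot A}(u)\,y$ and $\iota_{A\ot A}(y\lactd u)=y\,\iota_{A\ot A}(u)$ for $u\in A\ot A$, $y\in\MM(A\ot A)$, which follow directly from \prref{multialg}~(iv)--(v), together with $p\,(u)=p\ractd\iota_{A\ot A}(u)$ from \equref{hulp}.

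For idempotency I would argue that $M\ot N=(M\ot N)(A\ot A)$ by \leref{tensor}, rewrite $A\ot A=(A\ot A)\ractd\widetilde{\Delta}(A)$ by the right-idempotency of $\Delta$, and push the resulting $\widetilde{\Delta}(A)$ through $\iota_{A\ot A}$ using the first identity above; this exhibits every element of $M\ot N$ inside $(M\ot N)\ractd\widetilde{\Delta}(A)=(M\ot N)\cdot A$. For non-degeneracy, suppose $p\ractd\widetilde{\Delta}(a)=0$ for all $a\in A$, where $p\in M\ot N$. Write an arbitrary $u\in A\ot A$ as $u=\sum_s\widetilde{\Delta}(c_s)\lactd u_s$ by the left-idempotency of $\Delta$, so that $\iota_{A\ot A}(u)=\sum_s\widetilde{\Delta}(c_s)\,\iota_{A\ot A}(u_s)$ by the second identity; then $p\ractd\iota_{A\ot A}(u)=\sum_s\big(p\ractd\widetilde{\Delta}(c_s)\big)\ractd\iota_{A\ot A}(u_s)=0$, whence $p\,(u)=0$ for all $u\in A\ot A$ and $p=0$ by the non-degeneracy of $M\ot N$ over $A\ot A$ (\leref{tensor}).

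The main obstacle, and the only genuinely non-formal point, is this non-degeneracy step: the hypothesis only provides vanishing against the image $\widetilde{\Delta}(A)$, so one must convert vanishing against $\widetilde{\Delta}(A)$ into vanishing against all of $\iota_{A\ot A}(A\ot A)$. The crucial -- and perhaps counterintuitive -- feature is that idempotency of $M\ot N$ as a right $A$-module relies on the \emph{right}-idempotency of the extension, whereas its non-degeneracy relies on the \emph{left}-idempotency; it is precisely the two-sidedness of the idempotency of the $A$-bimodule $A\ot A$ that makes both properties hold at once.
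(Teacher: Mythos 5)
Your proposal is correct and follows essentially the same route as the paper's proof: well-definedness and associativity via \leref{tensor} and \prref{multialg}~(vii), idempotency from $A\ot A=(A\ot A)\ractd\widetilde{\Delta}(A)$, and non-degeneracy by expanding $a\ot a'\in A\ot A$ as $\sum_s\widetilde{\Delta}(c_s)\lactd u_s$ to convert vanishing against $\widetilde{\Delta}(A)$ into vanishing against all of $A\ot A$. Your closing remark about which side of the idempotency of the extension $\Delta$ is used where matches the paper's computation exactly.
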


\begin{proof}
By \leref{tensor} $M\ot N$ is a non-degenerate idempotent $k$-projective right $A\ot A$-module. \prref{multialg} (vii) then implies that $M\ot N$ is a unital right $\MM(A\ot A)$-module. \equref{AactonMotN} means nothing else than
$(m\ot n)\cdot a=(m\ot n)\ractd \widetilde{\Delta}(a)$, where $\ractd$ is exactly the aforementioned right $\MM(A\ot A)$-action on $M\ot N$. Hence the right $A$-action on $M\ot N$ is well-defined and associative.
Let us prove that this action is also non-degenerate. 
Recall that since $\Delta$ is an idempotent extension, given $a\ot a'\in A\ot A$, we can find elements $a_i\in A$ and $b_i\ot b'_i\in A\ot A$ such that $a\ot a'=\sum_i \widetilde{\Delta}(a_i)\lactd(b_i\ot b'_i)$. Now take any $m\ot n\in M\ot N$, then we find
\begin{eqnarray*}
(m\ot n)(a\ot a')&\equal{\equref{hulp}}&(m\ot n)\ractd\iota_{A\ot A}(a\ot a')
=\sum_i(m\ot n)\ractd\iota_{A\ot A}(\widetilde{\Delta}(a_i)\lactd(b_i\ot b'_i))\\
&&\hspace{-2cm}\equal{\equref{AbimMA}}\sum_i(m\ot n)\ractd(\widetilde{\Delta}(a_i)\iota_{A\ot A}(b_i\ot b'_i))
=\sum_i((m\ot n)\ractd\widetilde{\Delta}(a_i))\ractd\iota_{A\ot A}(b_i\ot b'_i)\\
&&\hspace{-2cm}=\sum_i((m\ot n)\cdot a_i)\ractd\iota_{A\ot A}(b_i\ot b'_i)\equal{\equref{hulp}} \sum_i((m\ot n)\cdot a_i)(b_i\ot b'_i).\\
\end{eqnarray*}
Suppose now that $(m \ot n)\cdot a=0$, for all $a\in A$. Then we know, using the above computation, that for all $a\ot a'\in A\ot A$, $(m\ot n)(a\ot a')=\sum_i((m\ot n)\cdot a_i)(b_i\ot b'_i)=0$. Since we know already that $M\ot N$ is non-degenerate as right $A\ot A$-module, we find that $m\ot n=0$, hence $M\ot N$ is also non-degenerate as right $A$-module.

Finally, let us verify that the action of $A$ on $M\ot N$ is idempotent. This follows from 
\begin{eqnarray*}
&&\hspace{-2cm}
M\ot N = (M\ot N)(A\ot A)= (M\ot N)((A\ot A) \ractd \widetilde{\Delta}(A))\\
&\equal{\equref{HULP}}& ((M\ot N)(A\ot A)) \ractd \widetilde{\Delta}(A) 
= (M\ot N) \ractd \widetilde{\Delta}(A)
= (M\ot N)\cdot A,
\end{eqnarray*}
where we used in the first and the fourth equality the idempotency of $M\ot N$ as right $A\ot A$-module and in the second one the idempotency of $A\ot A$ as right $A$-module (with $A$-action induced by $\widetilde{\Delta}$, the algebra map associated to the idempotent extension $\Delta$).
\end{proof}

Consider again a non-degenerate idempotent $k$-projective algebra $A$. Suppose that the following is a diagram of monoidal categories and strict monoidal forgetful functors. 
\begin{equation}\eqlabel{forgetA}
\xymatrix{
&A\hbox{-}\Ext \ar[dl] \ar[dr] \\
{_A\Mm} \ar[dr] && \Mm_A \ar[dl]\\
& \Mm_k
}
\end{equation}
Since $A$ is an object in $A$-$\Ext$, $A\ot A$ is also an object in $A$-$\Ext$, i.e. there is a non-degenerate idempotent ($k$-projective) extension $\Delta:A\mor A\ot A$. Recall from \thref{charext} that $\Delta$ induces an algebra map $\widetilde\Delta:A\to \MM(A\ot A)$ by putting 
\begin{equation}\eqlabel{tildedelta}
\widetilde\Delta(b)\lactd (a\ot a')=b\cdot (a\ot a'), \qquad (a\ot a')\ractd \widetilde\Delta(b)=(a\ot a')\cdot b,
\end{equation}
for all $a\ot a'\in A\ot A$ and $b\in A$.
Then by \leref{Amodtensor} we know that for any two $M,N\in \Mm_A$, we have an $A$-module structure on $M\ot N$. The following Lemma asserts that this $A$-module structure on $M\ot N$ coincides with the $A$-module structure given by the monoidal structure on $\Mm_A$.

\begin{lemma}\lelabel{compatibleAmod}
Let $A$ be a non-degenerate idempotent $k$-projective algebra such that \equref{forgetA} is a diagram of monoidal categories and strict monoidal forgetful functors. Then for any two $M, N\in\Mm_A$, we have with notation as above
\begin{equation}\eqlabel{compatibleAmod}
(m\ot n)\cdot a = \sum_{i,j} (m_i\ot n_j)((a_i\ot b_j)\ractd \widetilde{\Delta}(a)),
\end{equation}
for all $a\in A,m\in M$ and $n\in N$, where $m=\sum_i m_ia_i\in MA$ and $n=\sum_j n_jb_j\in NA$.
\end{lemma}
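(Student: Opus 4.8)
The plan is to reduce the computation of the monoidal right $A$-action on $M\ot N$ to the action on $A\ot A$, and then to propagate the latter to arbitrary $M,N$ using functoriality of the tensor product on $\Mm_A$. Throughout, $\otimes$ denotes the monoidal product of the relevant category and $\ot$ the underlying $k$-tensor product; strict monoidality of the forgetful functors will identify the two on objects and morphisms.

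First I would pin down the monoidal right $A$-action on $A\ot A$. The forgetful functor $A\hbox{-}\Ext\to\Mm_A$ sends the canonical extension $A$ to the regular right $A$-module $A$, and by hypothesis it is strict monoidal. Since $\Delta$ is, by construction, the tensor product of $A$ with itself in $A\hbox{-}\Ext$, its image $A\ot A$ in $\Mm_A$ must coincide (on the nose) with the tensor product of the regular module with itself. Hence the monoidal right $A$-action on $A\ot A$ in $\Mm_A$ is exactly the one induced by the extension $\Delta$, which by \equref{tildedelta} is $(x\ot y)\cdot a=(x\ot y)\ractd\widetilde\Delta(a)$ for all $x,y,a\in A$.

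Next, for fixed $m_i\in M$ and $n_j\in N$ I would introduce the right $A$-linear maps $f_i:A\to M,\ a\mapsto m_ia$ and $g_j:A\to N,\ a\mapsto n_ja$, which are morphisms in $\Mm_A$. By functoriality of $\otimes$ and strict monoidality of the forgetful functor $\Mm_A\to\Mm_k$, the morphism $f_i\otimes g_j:A\ot A\to M\ot N$ is right $A$-linear for the monoidal actions and has underlying $k$-linear map $f_i\ot g_j$. Writing $m\ot n=\sum_{i,j}(f_i\ot g_j)(a_i\ot b_j)$ and combining the $A$-linearity of $f_i\otimes g_j$ with the formula from the first step yields
\[
(m\ot n)\cdot a=\sum_{i,j}(f_i\ot g_j)\big((a_i\ot b_j)\ractd\widetilde\Delta(a)\big).
\]
Since $(a_i\ot b_j)\ractd\widetilde\Delta(a)$ lies in $A\ot A$ and $(f_i\ot g_j)(c\ot c')=m_ic\ot n_jc'=(m_i\ot n_j)(c\ot c')$, the right-hand side equals $\sum_{i,j}(m_i\ot n_j)\big((a_i\ot b_j)\ractd\widetilde\Delta(a)\big)$, which is precisely \equref{compatibleAmod}.

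The step I expect to be most delicate is the first one: extracting from ``strict monoidal forgetful functors'' the concrete fact that the monoidal product $A\ot A$ in $\Mm_A$ is exactly the module underlying the fixed extension $\Delta$ (rather than merely isomorphic to it), which hinges on the forgetful functor $A\hbox{-}\Ext\to\Mm_A$ sending the canonical extension $A$ to the regular module $A$. A minor point is that the auxiliary maps $f_i,g_j$ depend on the chosen non-unique decompositions $m=\sum_i m_ia_i$ and $n=\sum_j n_jb_j$; this is harmless, however, since the left-hand side $(m\ot n)\cdot a$ is the genuine decomposition-independent monoidal action, so the argument also re-confirms, in the spirit of \leref{Amodtensor}, that the right-hand side does not depend on the decompositions.
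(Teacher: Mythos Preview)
Your argument is essentially the paper's: both fix elements of $M$ and $N$, build the right $A$-linear maps $A\to M,\ a\mapsto ma$ and $A\to N,\ a\mapsto na$, and use that their tensor product is right $A$-linear together with the identification \equref{tildedelta} of the monoidal action on $A\ot A$. The only cosmetic differences are that the paper uses a single pair $(\psi,\phi)$ for arbitrary $m,n$ rather than your indexed families $(f_i,g_j)$, and that you are more explicit about why the monoidal $A$-action on $A\ot A$ in $\Mm_A$ agrees with the one coming from $\Delta$ in $A\hbox{-}\Ext$ (a point the paper leaves implicit).
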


\begin{proof}
Because of the idempotency as $A\ot A$-module, it suffices to prove that for all $m\ot n\in M\ot N$, $a\ot a'\in A\ot A$ and $b\in A$,
$$\big((m\ot n)(a\ot a')\big)\cdot b = (m\ot n) \big((a\ot a')\ractd \widetilde\Delta(b)\big).$$
Consider the morphisms $\psi: A\to M,\ \psi(a)=ma$ and $\phi:A\to N,\ \phi(a)=na$ in $\Mm_A$. Since $\Mm_A$ is a monoidal category, we know that $\psi\ot\phi: A\ot A\to M\ot N,\ (\psi\ot\phi)(a\ot a')=ma\ot na'=(m\ot n)(a\ot a')$ is a right $A$-linear map. Hence,
$$
((m\ot n)(a\ot a'))\cdot b = ((\psi\ot\phi)(a\ot a'))\cdot b 
= (\psi\ot\phi)((a\ot a') \ractd \widetilde\Delta(b)) = \\ 
(m\ot n) \big((a\ot a')\ractd \widetilde\Delta(b)\big), 
$$
where we used the right $A$-linearity of $\psi\ot\phi$ together with \equref{tildedelta} in the second equality.
\end{proof}

\begin{lemma}\lelabel{Deltacoass}
Let $A$ be a non-degenerate idempotent $k$-projective algebra such that \equref{forgetA} is a diagram of monoidal categories and strict monoidal forgetful functors. Then the non-degenerate idempotent ($k$-projective) extension $\Delta$ as defined above is coassociative in the sense of \equref{coass}.
\end{lemma}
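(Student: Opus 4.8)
The plan is to establish the coassociativity condition \equref{coass} directly, namely the equality of the two algebra maps $\ol{\Delta\ot A}\circ\widetilde\Delta$ and $\ol{A\ot\Delta}\circ\widetilde\Delta$ from $A$ to $\MM(A\ot A\ot A)$, by comparing the two right $A$-module structures that the monoidal structure of $\Mm_A$ puts on the triple tensor product. Since $A\in\Mm_A$, iterating the tensor product of $\Mm_A$ yields two objects $(A\ot A)\ot A$ and $A\ot(A\ot A)$, both of which have $A\ot A\ot A$ as underlying $k$-module.

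First I would make the two actions explicit. Applying \leref{compatibleAmod} to the pair $A\ot A,A\in\Mm_A$ (and once more to the inner factor $A\ot A$) should show that the monoidal right $A$-action on $(A\ot A)\ot A$ is exactly the one induced by the composite extension $(\Delta\ot A)\circ\Delta$, that is, by the algebra map $\ol{\Delta\ot A}\circ\widetilde\Delta:A\to\MM(A\ot A\ot A)$; symmetrically, the action on $A\ot(A\ot A)$ is induced by $\ol{A\ot\Delta}\circ\widetilde\Delta$. Concretely, one unwinds $\ol{\Delta\ot A}$ as the unital extension to $\MM(A\ot A)$ of $\widetilde{\Delta\ot A}=\Psi_{A\ot A,A}\circ(\widetilde\Delta\ot\iota_A)$ and checks that the resulting action on $(A\ot A)\ot A$ is precisely the one recorded by the iterated formula \equref{compatibleAmod}.

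Next I would invoke the hypothesis that the forgetful functor $\Mm_A\to\Mm_k$ in \equref{forgetA} is \emph{strict} monoidal. This forces the associativity constraint $(A\ot A)\ot A\to A\ot(A\ot A)$ of $\Mm_A$ to be sent to the standard associator of $\Mm_k$, which under the usual identification is the identity of $A\ot A\ot A$. Being an isomorphism in $\Mm_A$, it is right $A$-linear, so the two right $A$-actions identified in the previous step must agree on $A\ot A\ot A$.

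Finally, since $A\ot A\ot A$ is a non-degenerate algebra (an iteration of \leref{tensor}), a multiplier is determined by its right part, and hence the right $A$-module structure determines the inducing algebra map (see \leref{extvsmod} and \prref{multialg}). Two algebra maps producing the same right action therefore coincide, giving $\ol{\Delta\ot A}\circ\widetilde\Delta=\ol{A\ot\Delta}\circ\widetilde\Delta$, which is exactly \equref{coass}. The main obstacle is the identification in the second paragraph: matching the parenthesised, iterated module action delivered by \leref{compatibleAmod} with the intrinsic map $\ol{\Delta\ot A}\circ\widetilde\Delta$, keeping careful track of which tensorand each multiplier acts on and of the passage from $\widetilde{(-)}$ to its unital extension $\ol{(-)}$.
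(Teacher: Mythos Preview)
Your argument is correct and tracks the paper's closely: both use that the associator of $\Mm_A$ is the identity on the underlying $k$-module $A\ot A\ot A$ and then invoke \leref{compatibleAmod} to identify the two iterated right $A$-actions. The one genuine difference is in the final step. The paper obtains only the right-multiplier identity \equref{coass2} from $\Mm_A$, and then appeals symmetrically to the monoidal structure on ${_A\Mm}$ to obtain the left-multiplier identity \equref{coass3}; together these yield \equref{coass}. You instead observe that for a non-degenerate algebra $B$ the projection $\pi^r_B:\MM(B)\to R(B)^{\rm op}$ is injective (an immediate consequence of \equref{compatlambdarho}), so equality of the induced right actions on $A\ot A\ot A$ already forces $\ol{\Delta\ot A}\circ\widetilde\Delta=\ol{A\ot\Delta}\circ\widetilde\Delta$ as maps into $\MM(A\ot A\ot A)$. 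Your route thus uses strictly less of the hypothesis (only $\Mm_A$, not ${_A\Mm}$), at the cost of one extra observation about multipliers; the paper's route is more symmetric and makes the two halves \equref{coass2}, \equref{coass3} visible. Note that the injectivity of $\pi^r_B$ is not quite what \prref{multialg}(iii) states, so you should spell it out rather than merely citing that item.
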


\begin{proof}
Since $A\in \Mm_A$, $\Mm_A$ is a monoidal category and the forgetful functor to $\Mm_k$ is strict monoidal, we know that the map $f:A\ot(A\ot A)\to (A\ot A)\ot A,\ f(a\ot (a'\ot a''))=(a\ot a')\ot a''$ is an isomorphism of right $A$-modules. Furthermore, by \leref{compatibleAmod} the right $A$-module structure on these isomorphic right $A$-modules can be computed using the formula \equref{compatibleAmod}. Performing the computation of $f((b\ot (b'\ot b'')) \cdot a) = ((b\ot b')\ot b'')\cdot a$ explicitly results exactly in the formula \equref{coass2}. By a left-right symmetric argument, using the monoidal structure on ${_A\Mm}$, we find that also formula \equref{coass3} holds, and therefore $\Delta$ is coassociative.
\end{proof}

Suppose, as above, that $A$ is a non-degenerate idempotent $k$-projective algebra such that \equref{forgetA} is a diagram of monoidal categories and strict monoidal forgetful functors. Then, in particular, $k\in A\hbox{-}\Ext$, and therefore, by \thref{charext}, there is a non-degenerate idempotent extension $\varepsilon: A\mor k$, determined by $\widetilde\varepsilon:A\to \MM(k)= k:a \mapsto 1_k\cdot a = a\cdot 1_k$. Recall also that because of the idempotency of $\varepsilon$, $\widetilde \varepsilon$ is surjective. We denote by $g\in A$ a fixed element such that $\widetilde\varepsilon(g)=1_k$.

\begin{lemma}\lelabel{Deltacounit}
Let $A$ be a non-degenerate idempotent $k$-projective algebra such that \equref{forgetA} is a diagram of monoidal categories and strict monoidal forgetful functors. Then the non-degenerate idempotent ($k$-projective) extension $\Delta$ as defined above is counital in the sense of \equref{counit}.
\end{lemma}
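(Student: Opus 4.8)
The plan is to run exactly the argument of \leref{Deltacoass}, but with the associativity constraint replaced by the two unit constraints of $\Mm_A$. First I would pin down the monoidal unit $I$ of $\Mm_A$: since the forgetful functor $A\hbox{-}\Ext\to\Mm_A$ in \equref{forgetA} is strict monoidal, it carries the monoidal unit of $A\hbox{-}\Ext$ --- namely $k$ together with the extension structure $\varepsilon$ fixed above --- to $I$, so that $I$ is $k$ regarded as a right $A$-module through $\widetilde\varepsilon$, i.e. $\lambda\cdot a=\lambda\widetilde\varepsilon(a)$, with $1_k=1_k\cdot g$ by the choice $\widetilde\varepsilon(g)=1_k$. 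Because the forgetful functor $\Mm_A\to\Mm_k$ is strict monoidal as well, the left and right unit constraints $l_A:I\ot A\to A$ and $r_A:A\ot I\to A$ have as underlying $k$-linear maps the canonical isomorphisms $\lambda\ot a\mapsto\lambda a$ and $a\ot\lambda\mapsto\lambda a$; and since they are isomorphisms of $\Mm_A$, they are right $A$-linear.

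First I would exploit $r_A$ to obtain the condition $\ol{A\ot\varepsilon}\circ\widetilde\Delta=\iota_A$. Fix $a,p\in A$ and decompose $p=\sum_i p_iq_i\in A^2$. Using \leref{compatibleAmod} for $A\ot I$, the decomposition $1_k=1_k\cdot g$, and the fact that the right $A\ot A$-action on $A\ot I$ is $(b\ot\mu)(u\ot v)=bu\ot\mu\widetilde\varepsilon(v)$, one computes
\[
r_A\big((p\ot 1_k)\cdot a\big)=\sum_i p_i\,(A\ot\widetilde\varepsilon)\big((q_i\ot g)\ractd\widetilde\Delta(a)\big),
\]
whereas right $A$-linearity of $r_A$ forces $r_A\big((p\ot 1_k)\cdot a\big)=r_A(p\ot 1_k)\cdot a=pa$. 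On the other hand, by the definition of the tensor product of morphisms in $\ndi_k$ one has $\widetilde{A\ot\varepsilon}(u\ot v)=\iota_A\big(u\,\widetilde\varepsilon(v)\big)$, and feeding the decomposition $p=\sum_i p_iq_i$ together with $\widetilde\varepsilon(g)=1_k$ into the formula of \thref{charext} for the right multiplier of $\ol{A\ot\varepsilon}$ gives precisely $p\ractd\ol{A\ot\varepsilon}(\widetilde\Delta(a))=\sum_i p_i\,(A\ot\widetilde\varepsilon)\big((q_i\ot g)\ractd\widetilde\Delta(a)\big)$. Hence $p\ractd\ol{A\ot\varepsilon}(\widetilde\Delta(a))=pa=p\ractd\iota_A(a)$ for all $p$.

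A left--right mirror of this computation, now using the unit constraint $l_A:I\ot A\to A$ and the identity $\widetilde{\varepsilon\ot A}(u\ot v)=\iota_A\big(\widetilde\varepsilon(u)\,v\big)$, yields $p\ractd\ol{\varepsilon\ot A}(\widetilde\Delta(a))=pa=p\ractd\iota_A(a)$ for all $p$. It then remains to upgrade these two equalities of right multipliers to the full equalities of \equref{counit}. This is immediate from non-degeneracy of $A$: if two multipliers of $\MM(A)$ share the same right part $\rho$, then by the compatibility \equref{compatlambdarho} their left parts $\lambda$ satisfy $p\lambda(q)=\rho(p)q$ for all $p,q$, whence $\lambda$ agrees after cancelling $p$ on the left. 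Thus $\ol{A\ot\varepsilon}(\widetilde\Delta(a))=\iota_A(a)=\ol{\varepsilon\ot A}(\widetilde\Delta(a))$, which is exactly \equref{counit} (equivalently, its explicit form \equref{counit3}), so $\Delta$ is counital.

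I expect the single genuine obstacle to be the second paragraph: reconciling the bookkeeping of the module action of \leref{compatibleAmod} --- an outer right $A\ot A$-action precomposed with $\widetilde\Delta$ --- with the intrinsic description of $\ol{A\ot\varepsilon}$ coming from \thref{charext}. Concretely, one must verify that the $\ractd$ occurring in the module formula is literally the $\ractd$ occurring in the \thref{charext} formula, and that the element $g$ provided by idempotency of $\varepsilon$ can serve simultaneously as the idempotent witness $1_k=1_k\cdot g$ in $I$ and as the second tensor factor in the decomposition feeding \thref{charext}. Once this matching is in place the two halves are purely formal, and the passage from right multipliers to full multipliers via non-degeneracy is routine.
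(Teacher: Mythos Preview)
Your argument is correct and follows the same template as the paper: exploit the right $A$-linearity of the unit constraints, compute the action $(p\ot 1_k)\cdot a$ via \leref{compatibleAmod}, and match the result against the defining formula of $\ol{A\ot\varepsilon}$ (resp.\ $\ol{\varepsilon\ot A}$) from \thref{charext}. The bookkeeping you flag as the potential obstacle does go through exactly as you outline.

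There is one genuine difference worth noting. The paper obtains the second identity of \equref{counit3} from the right unit constraint $r_A$ in $\Mm_A$, but for the first identity it passes to the monoidal category ${}_A\Mm$ of \emph{left} $A$-modules and uses the unit isomorphism $k\ot A\cong A$ there; this directly yields the \emph{left} multiplier part of $\ol{\varepsilon\ot A}\circ\widetilde\Delta=\iota_A$, symmetric to the right multiplier part obtained from $\Mm_A$. You instead stay entirely inside $\Mm_A$, using both $r_A$ and $l_A$ there, and consequently only see the right multiplier halves of both counit identities; you then close the gap with the non-degeneracy argument in your last paragraph. Both routes are valid. Yours has the advantage of using only one of the three monoidal structures in \equref{forgetA} and of making the passage to full multipliers explicit; the paper's route is more symmetric (left multipliers from ${}_A\Mm$, right multipliers from $\Mm_A$) and avoids the separate non-degeneracy step, at the cost of invoking a second monoidal category.
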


\begin{proof}
By assumption, $k$ is the monoidal unit of the monoidal category $\Mm_A$, and for any $M\in \Mm_A$ the map $r_M: M\ot k\to M,\ r_M(m\ot t)=mt$ for $m\in M$ and $t\in k$, is an isomorphism of right $A$-modules.
In particular we have that, for all $a,b\in A$
$$r_A((b \ot 1_k)\cdot a) = r_A(b\ot 1_k) a= ba.$$ 
By \equref{compatibleAmod} we find
\begin{eqnarray*}
(b \ot 1_k)\cdot a &=& (b^1\ot 1_k) \big((b^2\ot g)\ractd \widetilde{\Delta}(a)\big)= b^1 \big( (A\ot \widetilde\varepsilon)((b^2\ot g)\ractd \widetilde{\Delta}(a))\big).
\end{eqnarray*}
In the second equality we used the formula for $\widetilde\varepsilon$ given above this Lemma. If we apply $r_A$ to the last expression we obtain exactly the right hand side of \equref{counit3}. Similarly, the left hand side of \equref{counit3} is obtained, using the isomorphism $k\ot A\cong A$ in the monoidal category ${}_A\Mm$ of left $A$-modules.
\end{proof}

We now arrive at the main result of this Section.

\begin{theorem}\thlabel{catint}
Let $A$ be a non-degenerate idempotent $k$-projective algebra, then there is a bijective correspondence between structures of a multiplier bialgebra on $A$ and structures of monoidal categories on $\Mm_A$, ${}_A \Mm$ and $A$-$\Ext$ such that all forgetful functors in diagram \equref{forgetA} are strict monoidal.
\end{theorem}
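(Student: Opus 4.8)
The plan is to prove the bijection by exhibiting two mutually inverse constructions, and to lean on the four preparatory lemmas, which already carry the analytic weight: \leref{Amodtensor} produces the tensor product on $\Mm_A$, \leref{compatibleAmod} shows this tensor product is forced, and \leref{Deltacoass}, \leref{Deltacounit} extract coassociativity and counitality. The theorem itself is then largely an assembly of these facts together with the observation that, on the underlying $k$-modules, all the monoidal constraints are the canonical isomorphisms, so that their compatibility reduces exactly to the comonoid axioms \equref{coass} and \equref{counit}.

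First I would treat the direction from a multiplier bialgebra $(\Delta,\varepsilon)$ to monoidal structures. On $\Mm_A$ I would use the tensor product $M\ot N$ of \leref{Amodtensor}, with right $A$-action \equref{AactonMotN} induced by $\widetilde\Delta$, and take $k$ (a right $A$-module via $\widetilde\varepsilon$) as unit object; bifunctoriality is immediate since for right $A\ot A$-linear $f\ot g$ the map is right $A$-linear through the algebra map $\widetilde\Delta$ into $\MM(A\ot A)$. The category ${}_A\Mm$ is handled by the left-right symmetric version. On $A\hbox{-}\Ext$, for extensions $R,S$ of $A$ I would take $R\ot S$ to be the tensor-product algebra carrying the $A$-extension structure obtained by composing $\Delta:A\mor A\ot A$ with the $(A\ot A)$-extension $R\ot S$ in $\ndi_k$ (the functor $F$ of \thref{charext}), with unit object $k$ through $\varepsilon$. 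In each case the associativity and unit constraints are the canonical isomorphisms of the underlying $k$-modules. The only genuine check is that these are morphisms in the respective categories: $A$-linearity of the associators on $\Mm_A$ and ${}_A\Mm$ is precisely coassociativity \equref{coass} (equivalently \equref{coass2} and \equref{coass3}), and $A$-linearity of the unitors is precisely counitality \equref{counit}. The pentagon and triangle then follow because, after the faithful forgetful functor to $\Mm_k$, all constraints become identities and the axioms hold in $\Mm_k$; strict monoidality of every functor in \equref{forgetA} is immediate by construction.

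For the converse, assume \equref{forgetA} is a diagram of monoidal categories with strict monoidal forgetful functors. Since $A\in A\hbox{-}\Ext$, the object $A\ot A$ again lies in $A\hbox{-}\Ext$, which by \thref{charext} is the same as a non-degenerate idempotent extension $\Delta:A\mor A\ot A$, i.e.\ an algebra map $\widetilde\Delta:A\to\MM(A\ot A)$; likewise the unit object, which the strict monoidal functors force to be $k$, yields $\varepsilon:A\mor k$. By \leref{Deltacoass} this $\Delta$ satisfies \equref{coass} and by \leref{Deltacounit} it satisfies \equref{counit}, so $(A,\Delta,\varepsilon)$ is a comonoid in $\ndi_k$, that is, a multiplier bialgebra. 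Thus this direction is essentially immediate from the two lemmas.

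Finally I would verify that the two passages are inverse. Going from a monoidal structure to $(\Delta,\varepsilon)$ and back returns the original structure because \leref{compatibleAmod} pins the tensor product on $\Mm_A$ (and symmetrically on ${}_A\Mm$) down to the one of \leref{Amodtensor}, while on $A\hbox{-}\Ext$ the extension structure is recovered from $\Delta$ by definition; conversely, building the monoidal structures from $(\Delta,\varepsilon)$ and re-extracting the object $A\ot A\in A\hbox{-}\Ext$ returns $\Delta$, and the unit $k$ returns $\varepsilon$. I expect the main obstacle to sit in the $A\hbox{-}\Ext$ layer: one must check that the single $A$-extension datum on $R\ot S$ (an $A$-bimodule with $B$-bilinear, $B$-balanced multiplication) restricts \emph{simultaneously}, under the two forgetful functors, to the right $A$-module structure of \leref{Amodtensor} and to its left-handed analogue, so that both $A\hbox{-}\Ext\to\Mm_A$ and $A\hbox{-}\Ext\to{}_A\Mm$ are strict monoidal at once. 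This bimodule compatibility, together with well-definedness of $R\ot S$ as an object of $A\hbox{-}\Ext$ and of its associator, is exactly where the single coalgebra datum $\widetilde\Delta$ must reconcile the left and the right module-theoretic pictures, and is the step I would write out in full.
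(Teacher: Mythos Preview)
Your approach is the same as the paper's: build the monoidal structures from $(\Delta,\varepsilon)$ via \leref{Amodtensor}, extract $(\Delta,\varepsilon)$ from the monoidal structures via \leref{Deltacoass} and \leref{Deltacounit}, and close the loop with \leref{compatibleAmod}. The logical skeleton is correct.

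However, you have misplaced the main difficulty. You flag the $A\hbox{-}\Ext$ layer as the obstacle, but the paper dispatches it in one sentence once $\Mm_A$ and ${}_A\Mm$ are monoidal: the bimodule structure on $R\ot S$ is simply the combination of the right $A$-action from $\Mm_A$ and the left $A$-action from ${}_A\Mm$, and the extension axioms are routine. What you pass over too quickly is your claim that ``$A$-linearity of the unitors is precisely counitality \equref{counit}''. This is only half true. In $\Mm_A$, right $A$-linearity of $r_M:M\ot k\to M$ is indeed a direct transcription of the right counit condition. But right $A$-linearity of the \emph{left} unitor $\ell_M:k\ot M\to M$ is not an immediate reading of the left counit condition: the action \equref{AactonMotN} on $k\ot M$ is defined through the \emph{right} multiplier $(g\ot a_i)\ractd\widetilde\Delta(a)$, whereas the left counit identity \equref{counit3} is phrased through the \emph{left} multiplier $\widetilde\Delta(a)\lactd(g\ot b^1)$. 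Bridging the two requires the multiplier compatibility \equref{compatlambdarho} together with idempotency and non-degeneracy of $M$; this is the computation that occupies most of the paper's proof. The symmetric issue arises for the right unitor in ${}_A\Mm$. This ``cross'' unitor check is precisely why both module categories must be assumed monoidal in the non-unital case (cf.\ the paper's closing remark), and it is the step you should write out in full rather than the $A\hbox{-}\Ext$ compatibility.
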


\begin{proof}
Suppose first that \equref{forgetA} is a diagram of monoidal categories and strict monoidal forgetful functors. Then by \leref{Deltacoass} and \leref{Deltacounit}, there are (non-degenerate) idempotent algebra ($k$-projective) extensions $\Delta:A\mor A\ot A$ and $\varepsilon:A\mor k$ which turn $A$ into a multiplier bialgebra.

Conversely, if $A$ is a multiplier bialgebra, with comultiplication $\Delta$ and counit $\varepsilon$, then we know by \leref{Amodtensor} that there is a well-defined functor 
$\ot:\Mm_A\times \Mm_A\to \Mm_A$. Using the coassociativity of $\Delta$, we can prove by similar arguments as in \leref{Deltacoass} that, for all $M, N, P\in \Mm_A$ (resp.\ in ${_A\Mm}$), the isomorphism $M\ot(N\ot P)\cong (M\ot N)\ot P$ holds.

Since $\varepsilon:A\mor k$ is a non-degenerate idempotent $k$-projective extension, we know that $k\in A\hbox{-}\Ext$, so in particular $k$ is a non-degenerate idempotent $k$-projective left and right $A$-module. Let us check that $k$ is a monoidal unit in $\Mm_A$. To prove that the map $r_M: M\ot k\to M,\ r_M(m\ot t)=mt$ for $m\in M$ and $t\in k$, is an isomorphism of right $A$-modules, we can use the same (converse) reasoning as in \leref{Deltacounit}. Next, we will show that that the $k$-linear isomorphism $\ell_M:k\ot M\to M,\ \ell_M(t\ot m)=tm$ is also right $A$-linear. Take any $m\in M$ and $a\in A$. Using the idempotency of $M$ as right $A$-module, we can write $m=\sum_i m_ia_i\in MA$. As before, take $g\in A$ such that $\widetilde\varepsilon(g)=1_k$. Then we have $m=\sum_i m_i \ell_A\big( (\widetilde\varepsilon\ot A)  (g\ot a_i)\big) $. Furthermore, using formula \equref{AactonMotN}, we find
\begin{eqnarray*}
\ell_M((1_k\ot m)\cdot a)&=& \sum_i \ell_M\big( (1_k\ot m_i) ((g\ot a_i)\ractd \widetilde{\Delta}(a)) \big)\\
&=& \sum_i m_i \ell_A\big((\widetilde\varepsilon\ot A)((g\ot a_i)\ractd \widetilde{\Delta}(a))\big).
\end{eqnarray*}
If we multiply the last expression with an arbitrary $b=\sum b^1b^2\in A=A^2$, we further obtain
\begin{eqnarray*}
&&\hspace{-2cm}\sum_i m_i \ell_A\Big((\widetilde\varepsilon\ot A)\big((g\ot a_i)\ractd \widetilde{\Delta}(a)\big)\Big) b \\
&&=
\sum_i m_i \ell_A\Big((\widetilde\varepsilon\ot A)\big((g\ot a_i)\ractd \widetilde{\Delta}(a)\big)\Big)\Big(\ell_A\big((\widetilde\varepsilon\ot A)(g\ot b^1) \big)b^2\Big)\\
&&=
\sum_i m_i \Big(\ell_A\big((\widetilde\varepsilon\ot A)\big((g\ot a_i)\ractd \widetilde{\Delta}(a)\big)\big)\ell_A\big((\widetilde\varepsilon\ot A)(g\ot b^1) \big)\Big)b^2\\
&&=
\sum_i m_i \big(\ell_A\circ (\widetilde\varepsilon\ot A)\big)\Big(\big((g\ot a_i)\ractd \widetilde{\Delta}(a)\big) (g\ot b^1)\Big)b^2\\
&&=
\sum_i m_i \big(\ell_A\circ (\widetilde\varepsilon\ot A)\big)\Big( (g\ot a_i) \big(\widetilde{\Delta}(a)\lactd (g\ot b^1)\big)\Big)b^2\\
&&=
\sum_i m_i \ell_A\big( (\widetilde\varepsilon\ot A)  (g\ot a_i)\big) 
\ell_A \Big( (\widetilde\varepsilon\ot A)\big(\widetilde{\Delta}(a)\lactd (g\ot b^1)\big)\Big)b^2\\
&&= m ab = \ell_M(1_k\ot m)ab.
\end{eqnarray*}
Here we used the idempotency of $A$ and the extension $\varepsilon$ in the first equality, the associativity of the $A$-action on $M$ in the second equality, the multiplicativity of the morphisms $\ell_A$ and $\widetilde\varepsilon\ot A$ in the third and penultimate equality, the multiplier property \equref{compatlambdarho} in the fourth equality, and \equref{counit3} in the last equality. 
From the fact that $M$ is non-degenerate as right $A$-module it follows then that $\ell_M((1_k\ot m)\cdot a)=\ell_M(1_k\ot m)a$, as desired.

The verification of the coherence conditions of the associativity and unit constraints is left to the reader.

This completes the proof that $\Mm_A$ is a monoidal category and the forgetful functor to $\Mm_k$ is a strict monoidal functor. Similarly, one proves that ${_A\Mm}$ is a monoidal category. Finally, having two objects $P,Q\in A\hbox{-}\Ext$ we can construct $P\ot Q\in A\hbox{-}\Ext$, by using the left $A$-module structure as computed in ${_A\Mm}$, the right $A$-module structure as in $\Mm_A$ and the $k$-algebra structure as in \equref{tensoralgebra}. It is then easy to verify that $A$-$\Ext$ becomes a monoidal category so that all forgetful functors in diagram \equref{forgetA} are strict monoidal.

To end the proof, we have to show that both constructions are mutually inverse. 
Starting with a multiplier bialgebra, it is an immediate consequence of our constructions that the reconstructed comultiplication and counit coincide with the original ones.
Conversely, if we start with the monoidal structures, construct the multiplier bialgebra and reconstruct the monoidal structures, it follows from \leref{compatibleAmod} that we recover the original monoidal structures. 
\end{proof}

\begin{remark}
If $A$ is an algebra with unity, then \equref{forgetA} will be a diagram of monoidal categories and strict monoidal functors if and only if $\Mm_A$ is a monoidal category and the forgetful functor $\Mm_A\to\Mm_k$ is a strict monoidal functor if and only if ${_A\Mm}$ is a monoidal category and the forgetful functor ${_A\Mm}\to\Mm_k$ is a strict monoidal functor. Indeed, 
since there are algebra isomorphisms $A\cong L(A)\cong R(A)^{\rm op}\cong \MM(A)$, the data of (i), (ii) and (iii) of \leref{extvsmod} are in bijective correspondence in the unital case, from which this statement can be easily derived.
\end{remark}

\section{Multiplier Hopf algebras}\selabel{multihopf}

In this Section we recall the definition of a multiplier Hopf algebra by A.\ Van Daele. We show that the antipode of a multiplier Hopf algebra $A$ can be interpreted as a convolution inverse in a certain ``multiplicative structure'' associated to the underlying multiplier bialgebra of $A$.

\subsection{Van Daele's definition}
Let us first introduce the following Sweedler-type notation. Let $A$ be a non-degenerate algebra and $\widetilde\Delta:A\to \MM(A\ot A)$ an algebra map. Suppose that $\widetilde\Delta$ satisfies the following conditions for all $a, b\in A$: 
\begin{eqnarray}
\widetilde{\Delta}(a)(1\ot b)&\subseteq& \iota_{A\ot A}(A\ot A),\eqlabel{fons1}\\
(a\ot 1)\widetilde{\Delta}(b)&\subseteq& \iota_{A\ot A}(A\ot A).\eqlabel{fons2}
\end{eqnarray}
Given elements $a,b\in A$, we denote by $a_{(1,b)}\ot a_{(2,b)}\in A\ot A$ (summation implicitly understood) the element such that $\widetilde\Delta(a)(1\ot b)=\iota_{A\ot A}(a_{(1,b)}\ot a_{(2,b)})$. Similarly, we denote by $a_{(b,1)}\ot a_{(b,2)}$ the element in $A\ot A$ such that $(b\ot 1)\widetilde\Delta(a)=\iota_{A\ot A}(a_{(b,1)}\ot a_{(b,2)}).$

Recall from \cite{VD:Mult} the following definitions. 
The algebra map $\widetilde\Delta:A\to \MM(A\ot A)$ is called \emph{coassociative}
if the following property holds:
\begin{eqnarray}\eqlabel{coassVD}
(a\ot 1\ot 1)(\widetilde\Delta \ot A)\big(\widetilde\Delta(b)(1\ot c)\big)
&=&(A\ot \widetilde\Delta)\big((a\ot 1)\widetilde\Delta(b)\big)(1\ot 1\ot c),\ {\rm i.e.}\\
b_{(1,c)(a,1)}\ot b_{(1,c)(a,2)} \ot b_{(2,c)} &=& b_{(a,1)}\ot b_{(a,2)(1,c)}\ot b_{(a,2)(2,c)},\nonumber
\end{eqnarray}
for all $a,b, c\in A$ (to be able to let $\widetilde\Delta\ot A$ (resp. $A\ot\widetilde\Delta$) act on $\widetilde\Delta(b)(1\ot c)$ (resp. $(a\ot 1)\widetilde\Delta(b)$), $A\ot A$ is identified with its image in $\MM(A\ot A)$ under $\iota_{A\ot A}$).

A \emph{multiplier Hopf algebra} is a non-degenerate $k$-projective algebra $A$, equipped with a coassociative algebra map $\widetilde\Delta:A\to \MM(A\ot A)$ that satisfies \equref{fons1} and \equref{fons2}, and such that the following maps are bijective:
\begin{eqnarray*}
T_1:A\ot A\to A\ot A,\ && T_1(a\ot b)= a_{(1,b)}\ot a_{(2,b)};\\
T_2:A\ot A\to A\ot A,\ && T_2(a\ot b)=b_{(a,1)}\ot b_{(a,2)}.
\end{eqnarray*}

Van Daele proves that the comultiplication of a multiplier Hopf algebra $A$ is a non-degenerate algebra map in the sense of \reref{nondegenerateVD}.
Furthermore, $A$ can be endowed with an algebra map $\widetilde\varepsilon:A\to k$ that satisfies the following counit conditions:
\begin{eqnarray}
(A\ot \widetilde\varepsilon)\big((a\ot 1)\widetilde\Delta(b)\big) &=& b_{(a,1)}\widetilde\varepsilon(b_{(a,2)}) = ab,\eqlabel{counitVD1}\\ 
(\widetilde\varepsilon\ot A)\big(\widetilde\Delta(a)(1\ot b)\big) &=& \widetilde\varepsilon(a_{(1,b)})a_{(2,b)} = ab,\eqlabel{counitVD2}
\end{eqnarray}
for all $a,b\in A$ (see \cite[Theorem 3.6]{VD:Mult}).
Moreover, there exists an ``antipode'' map $S:A\to \MM(A)$ that satisfies the following properties:
\begin{eqnarray}
m_1(S\ot A)\big(\widetilde{\Delta}(a)(1\ot b)\big)=S(a_{(1,b)})\lactd a_{(2,b)} =\widetilde\varepsilon(a)b,\eqlabel{antipode2},\\
m_2(A\ot S)\big((a\ot 1)\widetilde{\Delta}(b)\big)=b_{(a,1)}\ractd S(b_{(a,2)}) =a\widetilde\varepsilon(b),\eqlabel{antipode1}
\end{eqnarray}
for all $a,b\in A$ (see \cite[Theorem 4.6]{VD:Mult}), where $m_1:\MM(A)\ot A\to A$ and $m_2:A\ot \MM(A)\to A$ are the natural evaluation maps. 
Finally, by \cite[Proposition 1.2]{VDZha:corep} it follows that $A$ is an algebra with local units, so in particular $A$ is idempotent. Remark that in \cite{VD:Mult} $k$ is supposed to be a field. Therefore, $\widetilde\varepsilon$ is automatically surjective (i.e.\ non-degenerate in the sense of \reref{nondegenerateVD}).

We now easily arrive at the following.
\begin{proposition}
A multiplier Hopf algebra is a multiplier bialgebra. 
\end{proposition}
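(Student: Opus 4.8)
The plan is to verify directly that the data carried by a multiplier Hopf algebra $A$ assemble into a comonoid in $\ndi_k$, i.e.\ that $(A,\Delta,\varepsilon)$ is a multiplier bialgebra in the sense of our Definition. Concretely, I would split the argument into four points: (1) that $A$ is an object of $\ndi_k$; (2) that the comultiplication and counit define morphisms $\Delta:A\mor A\ot A$ and $\varepsilon:A\mor k$ in $\ndi_k$; (3) that $\Delta$ is coassociative in the sense of \equref{coass}; and (4) that $\varepsilon$ is a counit in the sense of \equref{counit}. Almost all of the analytic content has already been isolated in \prref{coass} and \prref{counit}, so the proof is essentially a matter of matching Van Daele's hypotheses to these categorical conditions.

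For (1), recall that $A$ is assumed non-degenerate and $k$-projective, while the cited result \cite[Proposition 1.2]{VDZha:corep} shows that $A$ has local units and is therefore idempotent; hence $A\in\ndi_k$. For (2), the comultiplication is an algebra map $\widetilde\Delta:A\to\MM(A\ot A)$, and Van Daele's theorem that $\widetilde\Delta$ is non-degenerate in the sense of \reref{nondegenerateVD} says precisely that $A\ot A$ is an idempotent $A$-extension via $\widetilde\Delta$. Since $A\ot A$ is itself non-degenerate, idempotent and $k$-projective (by \leref{tensor}), \leref{111} upgrades this to a non-degenerate idempotent $k$-projective extension, which by \thref{charext} is exactly a morphism $\Delta:A\mor A\ot A$ in $\ndi_k$. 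For the counit, $\widetilde\varepsilon:A\to\MM(k)=k$ is an algebra map which is surjective (as recalled at the end of the section), and surjectivity of $\widetilde\varepsilon$ is exactly idempotency of the extension $k$; since $k$ is a non-degenerate idempotent $k$-projective algebra, \leref{111} again yields a morphism $\varepsilon:A\mor k$ in $\ndi_k$.

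For (3) and (4), I would simply invoke \prref{coass} and \prref{counit}. Van Daele's coassociativity \equref{coassVD} is, after identifying $A\ot A$ with its image under $\iota_{A\ot A}$ and $\widetilde\Delta\ot A$ with $\ol{\Delta\ot A}$ on that image, literally the condition \equref{coassfonzie}; by \prref{coass} this is equivalent to coassociativity of $\Delta$ in the sense of \equref{coass}. Likewise, the counit identities \equref{counitVD1} and \equref{counitVD2} are precisely \equref{counitfonzie} (with $\ol{\varepsilon\ot A}$ and $\ol{A\ot\varepsilon}$ replacing $\widetilde\varepsilon\ot A$ and $A\ot\widetilde\varepsilon$ on the image of $A\ot A$), so \prref{counit} gives counitality in the sense of \equref{counit}. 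Combining the four points, $(A,\Delta,\varepsilon)$ is a comonoid in $\ndi_k$, i.e.\ a multiplier bialgebra.

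The main obstacle is not any deep computation but the careful bookkeeping in (2) and in the identifications of (3)--(4): one must check that the maps $\ol{\Delta\ot A}$, $\ol{A\ot\Delta}$, $\ol{\varepsilon\ot A}$, $\ol{A\ot\varepsilon}$ --- defined on the full multiplier algebras --- agree on the relevant arguments with Van Daele's operators, which act only after forming $\widetilde\Delta(a)(1\ot b)$ or $(a\ot 1)\widetilde\Delta(b)$. This is exactly where the hypotheses \equref{fons1} and \equref{fons2} are needed: they guarantee that these arguments lie in $\iota_{A\ot A}(A\ot A)$, so that the two families of maps may be compared. Once this identification is in place, the statement follows formally from the propositions already established.
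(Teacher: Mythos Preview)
Your proposal is correct and follows essentially the same approach as the paper's proof: both verify that $A\in\ndi_k$, that $\widetilde\Delta$ and $\widetilde\varepsilon$ yield morphisms in $\ndi_k$ via \reref{nondegenerateVD}, and then invoke \prref{coass} and \prref{counit} after identifying Van Daele's conditions \equref{coassVD}, \equref{counitVD1}, \equref{counitVD2} with \equref{coassfonzie} and \equref{counitfonzie}. Your treatment is in fact somewhat more explicit than the paper's, particularly in spelling out why \equref{fons1} and \equref{fons2} are needed to match the $\ol{\Delta\ot A}$-type maps with Van Daele's $\widetilde\Delta\ot A$-type maps; the paper handles this in one line by noting that the relevant equalities hold if and only if they hold after applying the injective maps $\iota_{A\ot A\ot A}$ and $\iota_A$.
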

\begin{proof}
By the observations made above, we know that $A$ is a non-degenerate idempotent $k$-projective algebra, equipped with non-degenerate algebra maps $\widetilde\Delta:A\to \MM(A\ot A)$ and $\widetilde\varepsilon: A\to k=\MM(k)$. So by \reref{nondegenerateVD} $A\ot A$ and $k$ are non-degenerate idempotent ($k$-projective) extensions of $A$, i.e.  
$\widetilde\Delta$ and $\widetilde\varepsilon$ give rise to morphisms $\Delta:A\mor A\ot A$ and $\varepsilon: A\mor k$ in $\ndi_k$. The equalities \equref{coassVD}, \equref{counitVD1} and \equref{counitVD2} hold if and only if they hold after applying the injective algebra maps $\iota_{A \ot A\ot A}$ and $\iota_{A}$. Then $\Delta$ is a coassociative and counital comultiplication by \prref{coass} and \prref{counit}. This shows that $A$ is a multiplier bialgebra.
\end{proof}

An immediate consequence of the previous Proposition is that for a multiplier Hopf algebra $A$ we have a diagram of monoidal categories and strict monoidal forgetful functors as in \equref{forgetA}.
\begin{remark}
The definition of a multiplier Hopf algebra differs from the classical definition of a Hopf algebra, not only in its range of generality, but also in its initial set-up. Classically, a Hopf algebra is defined as a bialgebra having an antipode. As we have seen above, a multiplier Hopf algebra is a multiplier bialgebra that possesses an antipode, but the converse is not true. In fact, for a multiplier Hopf algebra the maps $T_1$ and $T_2$ are supposed to be bijective. In case of an algebra $A$ with unit, these conditions are equivalent to $A$ being a Hopf algebra, as we will see in what follows. Recall that for any (usual) bialgebra $A$, one can develop the theory of Hopf-Galois extensions over $A$. In this theory, we study (right) $A$-comodule algebras $B$, and define the coinvariants of $B$ as $B^{{\rm co} A}=\{b\in B~|~ \rho(b)=b\ot 1\}$, where $\rho:B\to B\ot A,~\rho(b)=b_{[0]}\ot b_{[1]}$ is the $A$-coaction on $B$. Then $B^{{\rm co} A} \subset B$ is called a (right) $A$-Galois extension if the canonical map
$$\can:B\ot_{B^{{\rm co}A}} B\to B\ot A,\ \can(b'\ot b)= b'b_{[0]}\ot b_{[1]}$$
is bijective.
A result of Schauenburg \cite{Sch:biHopf} states that a bialgebra $A$ is a Hopf algebra if $A$ itself is a  faithfully flat right $A$-Galois extension. If we consider a bialgebra as right comodule algebra over itself (via its comultiplication map), then $A^{{\rm co}A}\cong k$ and $\can=T_2$. Similarly, 
by considering $A$ as left comodule algebra over itself,
we obtain $T_1$ as the canonical map of the left $A$-Galois extension $k \subset A$. Therefore, we can intuitively understand a multiplier Hopf algebra as being a multiplier bialgebra $A$ such that $k\subset A$ is a left and right ``Hopf-Galois extension''. The construction of the antipode can then be understood as a generalizaton of Schauenburg's result to the non-unital case. Remark that in Van Daele's original definition $k$ is supposed to be a field, so the faithfully flatness condition is automatically satisfied.
\end{remark}

\subsection{The antipode as convolution inverse}
Classically, the antipode of a Hopf algebra $A$ is the inverse of the identity map in the convolution algebra $\End(A)$. 
If $A$ is a multiplier bialgebra, then $\End(A)$ is no longer a (convolution) algebra. In this section we show that it is however possible to put a richer structure on the $k$-module $\Hom(A,\MM(A))$, that makes it possible to define the antipode as a kind of convolution inverse of the map $\iota_A$.

\begin{lemma}
Let $A$ be a non-degenerate idempotent algebra and consider $R=\Hom(A,\MM(A))$. Then 
\begin{enumerate}[(i)]
\item
$R$ is a non-degenerate $A$-bimodule with actions
$$ (b\cdot f \cdot b')(a)=f(b'ab),$$
for all $a,b,b'\in A$ and $f\in R$;
\item
$R$ is a non-degenerate $A$-bimodule with actions
$$(b\lact f \ract b')(a)=\iota_A(b)f(a)\iota_A(b'),$$
for all $a,b,b'\in A$ and $f\in R$;
\item
for all $b,b'\in A$ and $f\in R$ we have,
\begin{eqnarray*}
b\lact (b'\cdot f) = b'\cdot (b\lact f), \qquad (f\cdot b)\ract b'=(f\ract b')\cdot b;\\
b\lact (f\cdot b') = (b\lact f)\cdot b', \qquad (b \cdot f) \ract b' = b\cdot (f\ract b');
\end{eqnarray*}
hence $R$ is an $A\ot A^{op}$-bimodule.
\end{enumerate}
\end{lemma}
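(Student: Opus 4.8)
The plan is to treat the three parts in turn, the only nontrivial point being the non-degeneracy in (ii), which I would route through the structure of $\MM(A)$ recorded in \prref{multialg}.

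For part (i), I would first unwind the definitions to $(b\cdot f)(a)=f(ab)$ and $(f\cdot b)(a)=f(ba)$. Then associativity of the left action ($b\cdot(c\cdot f)=(bc)\cdot f$), of the right action ($(f\cdot b)\cdot c=f\cdot(bc)$), and the bimodule identity $(b\cdot f)\cdot b'=b\cdot(f\cdot b')$ all reduce to associativity of $\mu_A$, the last one evaluating to the displayed formula $f(b'ab)$. For non-degeneracy, suppose $f\cdot b=0$ for all $b\in A$, that is $f(ba)=0$ for all $a,b\in A$; since $A$ is idempotent, $A=A^2$, so $f$ vanishes on all of $A$ and hence $f=0$. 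The left-hand non-degeneracy is symmetric, using $b\cdot f=0$ to deduce $f(ab)=0$. This is the only place where idempotency of $A$ enters.

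For part (ii), the actions are $(b\lact f)(a)=\iota_A(b)f(a)$ and $(f\ract b)(a)=f(a)\iota_A(b)$, the products being taken in the unital algebra $\MM(A)$; the module axioms and the bimodule identity then follow from associativity of $\MM(A)$ together with $\iota_A$ being an algebra map (\prref{multialg}(ii)), with $\big((b\lact f)\ract b'\big)(a)=\iota_A(b)f(a)\iota_A(b')$. The substantive point is non-degeneracy, and here I claim it suffices to establish that $\MM(A)$ is non-degenerate as a right (and left) $A$-module under the action $x\ract b=x\iota_A(b)$ (resp.\ $b\lact x=\iota_A(b)x$) of \prref{multialg}(iv): indeed $f\ract b=0$ for all $b$ says precisely that $f(a)\iota_A(b)=0$ for all $a,b$, so applying the claim pointwise forces $f(a)=0$ for every $a$. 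To prove the claim, write a multiplier as $x=(\lambda,\rho)$; by \prref{multialg}(iv) we have $x\iota_A(b)=\iota_A(\lambda(b))$, so if $x\iota_A(b)=0$ for all $b$ then $\iota_A(\lambda(b))=0$, and since $A$ is non-degenerate the map $\iota_A$ is injective (\prref{multialg}(iii)), giving $\lambda=0$. The multiplier compatibility \equref{compatlambdarho}, namely $a\lambda(b)=\rho(a)b$, then yields $\rho(a)b=0$ for all $a,b$, whence $\rho=0$ by non-degeneracy of $A$, so $x=0$. The left case is entirely symmetric.

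For part (iii), each of the four identities is a direct evaluation at an arbitrary $a\in A$: for instance both sides of $b\lact(f\cdot b')=(b\lact f)\cdot b'$ equal $\iota_A(b)f(b'a)$, and both sides of $(b\cdot f)\ract b'=b\cdot(f\ract b')$ equal $f(ab)\iota_A(b')$, the remaining two being analogous. These four relations say exactly that the $A$-bimodule structure $(\cdot,\cdot)$ of (i) and the $A$-bimodule structure $(\lact,\ract)$ of (ii) commute; so, under the standard identification of an $A$-bimodule with a one-sided $A\ot A^{op}$-module, they equip $R$ with a left and a right $A\ot A^{op}$-action that commute, i.e.\ make $R$ an $A\ot A^{op}$-bimodule. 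I expect no genuine obstacle in (i) or (iii), which are bookkeeping; the one step that requires more than unwinding definitions is the non-degeneracy argument in (ii), which is why I would channel it through \prref{multialg}.
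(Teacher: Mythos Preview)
Your proposal is correct and follows essentially the same route as the paper: idempotency of $A$ for the non-degeneracy in (i), and for (ii) the paper likewise reduces to showing that a multiplier annihilated on one side by all of $\iota_A(A)$ must vanish, via injectivity of $\iota_A$, the compatibility \equref{compatlambdarho}, and non-degeneracy of $A$. Your only cosmetic difference is that you isolate this step as the separate claim ``$\MM(A)$ is non-degenerate as an $A$-module'' before applying it pointwise, whereas the paper does it inline.
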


\begin{proof}
$\ul{(i)}.$ We only check the non-degeneracy of the left action. Take $f\in R$ and suppose that $b\cdot f=0$ for all $b\in A$, then $(b\cdot f)(a)=f(ab)=0$ for all $a,b\in A$. Since $A$ is idempotent, we then find that $f=0$.\\
$\ul{(ii)}.$ Again, we only proof that $R$ is non-degenerate as left $A$-module. Take $f\in R$ and suppose that 
$b\lact f=0$, for all $b\in A$. Then, for any $a\in A$, we have $0=\iota_A(b)f(a)\equal{\equref{AbimMA}}\iota_A(b\ractd f(a))$, so by the injectivity of $\iota_A$ we find $0=b\ractd f(a)=\rho_a(b)$, for all $a,b\in A$ (where we used the notation $f(a)=(\lambda_a,\rho_a)\in \MM(A)$). Hence $\rho_a=0$ for all $a\in A$, which implies that $f(a)=0$, for all $a\in A$, that is $f=0$.\\
$\ul{(iii)}.$ Easy to check.
\end{proof}

Suppose now that $A$ is a multiplier bialgebra. Suppose furthermore that the comultiplication of $A$ satisfies conditions \equref{fons1} and \equref{fons2}. Using Sweedler-type notation as introduced in the previous Section, we 
can endow $R$ with two ``local multiplication structures'', i.e. two linear maps as follows:
\begin{eqnarray*}
\ast^1: R\ot R\to \Hom(A,R),\ f\ot g \mapsto (b\mapsto f\ast^b g),\\
\ast_2: R\ot R\to \Hom(A,R),\ f\ot g \mapsto (b\mapsto f\ast_b g),
\end{eqnarray*}
where 
$f\ast^b g(a)= \mu_{\MM(A)}\big((f\ot g) (\widetilde\Delta(a)(1\ot b))\big)=  f(a_{(1,b)})g(a_{(2,b)})$ and
$f\ast_b g(a)= \mu_{\MM(A)}\big( (f\ot g) ((b\ot 1)\widetilde\Delta(a))\big)=  f(a_{(b,1)})g(a_{(b,2)})$.
These multiplications satisfy the following associativity condition.
\begin{lemma}
With notation as above, the following holds for all $f,g,h\in R$ and $a,b\in A$:
$$f\ast_a (g\ast^b h)=(f\ast_a g)\ast^b h.$$
\end{lemma}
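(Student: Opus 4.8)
The plan is to verify the stated identity by evaluating both sides --- which are a priori elements of $R=\Hom(A,\MM(A))$ --- at an arbitrary element $c\in A$, and checking that the resulting multipliers in $\MM(A)$ agree. First I would unfold the left-hand side. Since $g\ast^b h$ is again a member of $R$, the definition of $\ast_a$ gives $\big(f\ast_a(g\ast^b h)\big)(c)=f(c_{(a,1)})\,(g\ast^b h)(c_{(a,2)})$, and unfolding the inner $\ast^b$-product on the leg $c_{(a,2)}$ produces the triple product $f(c_{(a,1)})\,g(c_{(a,2)(1,b)})\,h(c_{(a,2)(2,b)})$ in $\MM(A)$. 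Symmetrically, unfolding the right-hand side yields $f(c_{(1,b)(a,1)})\,g(c_{(1,b)(a,2)})\,h(c_{(2,b)})$. These triple products are unambiguous because $\MM(A)$ is a unital, hence associative, algebra by \prref{multialg}.

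The key observation is that these two multipliers are the images of the two sides of Van Daele's coassociativity identity \equref{coassVD} under one and the same $k$-linear map, namely $\mu^{(2)}_{\MM(A)}\circ(f\ot g\ot h):A\ot A\ot A\to \MM(A)$, where $\mu^{(2)}_{\MM(A)}$ denotes iterated multiplication in $\MM(A)$. Indeed, applying this map to $c_{(a,1)}\ot c_{(a,2)(1,b)}\ot c_{(a,2)(2,b)}$ recovers the unfolded left-hand side, while applying it to $c_{(1,b)(a,1)}\ot c_{(1,b)(a,2)}\ot c_{(2,b)}$ recovers the unfolded right-hand side. Hence it suffices to prove that these two elements of $A\ot A\ot A$ coincide.

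But this coincidence is precisely \equref{coassVD} with the comultiplied element taken to be $c$, sandwiched on the left by the subscript $a$ coming from $\ast_a$ and on the right by the subscript $b$ coming from $\ast^b$. Since $A$ is a multiplier bialgebra whose comultiplication is assumed to satisfy \equref{fons1} and \equref{fons2}, \prref{coass} guarantees that $\widetilde\Delta$ is coassociative in Van Daele's sense, so \equref{coassVD} holds; applying the linear map above to both of its sides then yields the asserted associativity relation.

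I expect the only real difficulty to be bookkeeping rather than anything conceptual. One must check that every nested Sweedler leg, such as $c_{(a,2)(1,b)}$, genuinely lies in $\iota_{A\ot A}(A\ot A)$, so that applying $g$ and $h$ to its two legs is legitimate; this is exactly what \equref{fons1} and \equref{fons2} ensure, under the standing identification of $A\ot A$ with its image in $\MM(A\ot A)$. The other point requiring attention is matching the left and right sandwiching indices to the correct slots of \equref{coassVD}; once this correspondence is set up correctly, the statement is simply the image of coassociativity under a fixed linear map.
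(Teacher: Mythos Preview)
Your proposal is correct and follows exactly the same approach as the paper: evaluate both sides at an arbitrary $c\in A$, unfold the nested $\ast$-products to triple products in $\MM(A)$, and invoke the coassociativity identity \equref{coassVD} to match them. The paper's proof is simply a terser version of what you wrote, omitting the explicit mention of the linear map $\mu^{(2)}_{\MM(A)}\circ(f\ot g\ot h)$ and the justification via \prref{coass} that \equref{coassVD} holds.
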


\begin{proof}
Take any $c\in A$, then we find
\begin{eqnarray*}
\big(f\ast_a (g\ast^b h)\big)(c)&=&f(c_{(a,1)})(g\ast^b h)(c_{(a,2)})
=f(c_{(a,1)})g(c_{(a,2)(1,b)})h(c_{(a,2)(2,b)})\\
&&\hspace{-2cm}=f(c_{(1,b)(a,1)})g(c_{(1,b)(a,2)})h(c_{(2,b)})
=(f\ast_a g)(c_{(1,b)})h(c_{(2,b)}) = \big((f\ast_a g) \ast^b h\big)(c),
\end{eqnarray*}
where we used \equref{coassVD} in the third equality.
\end{proof}

Furthermore, we have the following linear maps
\begin{eqnarray*}
&\alpha\in \Hom(A,R),\ \alpha(b)(a)=\alpha_b(a)= \widetilde\varepsilon(a)\iota_A(b);\\
&\beta^1: R \to \Hom(A,R), \beta^1(f)(b)=\beta^b(f)= b\cdot f;\\
&\beta_2: R \to \Hom(A,R), \beta_2(f)(b)=\beta_b(f)= f\cdot b.
\end{eqnarray*}
Then the following unitality condition holds for the multiplicative structure we defined on $R$.
\begin{lemma}
With notation as above, we have for all $f\in R$ and $b\in A$,
$$\alpha_b \ast^1 f = \beta^1 (b\lact f),\qquad f\ast_2 \alpha_b=\beta_2(f\ract b).$$
\end{lemma}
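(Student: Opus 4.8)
The plan is to verify each identity by unfolding all definitions and evaluating at arbitrary arguments. Both sides of the first equation are elements of $\Hom(A,R)$, so I would fix $c\in A$ and compare $(\alpha_b\ast^1 f)(c)=\alpha_b\ast^c f$ with $\beta^1(b\lact f)(c)=c\cdot(b\lact f)$ as elements of $R=\Hom(A,\MM(A))$; since these are in turn functions on $A$, I would evaluate once more at an arbitrary $a\in A$, reducing everything to an equality in $\MM(A)$.

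For the left-hand side, the definitions of $\ast^c$ and of $\alpha_b$ give
$$(\alpha_b\ast^c f)(a)=\sum\alpha_b(a_{(1,c)})f(a_{(2,c)})=\sum\widetilde\varepsilon(a_{(1,c)})\,\iota_A(b)\,f(a_{(2,c)}).$$
The two structural facts I would exploit are that $\widetilde\varepsilon(a_{(1,c)})$ is a scalar (hence central) and that the fixed multiplier $\iota_A(b)$ is independent of the summation index. Pulling $\iota_A(b)$ to the front and, by $k$-linearity of $f$, moving the scalar inside $f$, the sum collapses through the counit identity \equref{counitVD2} (namely $\sum\widetilde\varepsilon(a_{(1,c)})a_{(2,c)}=ac$) to $\iota_A(b)f(ac)$. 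The right-hand side unfolds directly from the two bimodule structures on $R$, namely $(c\cdot(b\lact f))(a)=(b\lact f)(ac)=\iota_A(b)f(ac)$, so the two agree. The second identity is the left--right mirror: evaluating $f\ast_c\alpha_b$ at $a$ yields $\bigl(\sum f(a_{(c,1)})\widetilde\varepsilon(a_{(c,2)})\bigr)\iota_A(b)$, which collapses through \equref{counitVD1} to $f(ca)\iota_A(b)$, matching $((f\ract b)\cdot c)(a)=(f\ract b)(ca)=f(ca)\iota_A(b)$.

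I do not expect a serious obstacle, since after the preliminaries the computation is pure bookkeeping. The two points deserving genuine care are the following. First, one must be sure that the counit identities \equref{counitVD1}--\equref{counitVD2} are available in the present merely-bialgebra setting: they are, because $A$ is a counital multiplier bialgebra, so \prref{counit} applies, while the standing hypotheses \equref{fons1}--\equref{fons2} guarantee that $\widetilde\Delta(a)(1\ot c)$ and $(c\ot 1)\widetilde\Delta(a)$ lie in $\iota_{A\ot A}(A\ot A)$, and the injectivity of $\iota_A$ (from non-degeneracy of $A$) lets one strip $\iota_A$ off both sides of \equref{counitfonzie} to recover the Sweedler form. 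Second, because $\MM(A)$ is noncommutative one must track on which side $\iota_A(b)$ lands --- on the left for the $\ast^1/\lact$ identity and on the right for the $\ast_2/\ract$ identity --- which is precisely why the statement pairs the upper and lower stars with the left and right module actions as it does.
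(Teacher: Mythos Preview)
Your proposal is correct and follows essentially the same route as the paper: evaluate at $c\in A$ and then at $a\in A$, unfold the definitions, and collapse via the counit identity \equref{counitVD2} to reach $\iota_A(b)f(ac)$, which matches the right-hand side. Your additional care in justifying why \equref{counitVD1}--\equref{counitVD2} are available in the multiplier-bialgebra-plus-\equref{fons1}--\equref{fons2} setting (via \prref{counit} and injectivity of $\iota_A$) is a point the paper leaves implicit, but otherwise the arguments coincide.
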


\begin{proof}
We only show the first equality, the second one follows by a similar computation. 
Take any $a,c\in A$, then we check that
\begin{eqnarray*}
(\alpha_b \ast^c f)(a)&=& \alpha_b (a_{(1,c)}) f(a_{(2,c)}) =\iota_A(b)\widetilde\varepsilon(a_{(1,c)}) f(a_{(2,c)})\\
 &=&  \iota_A(b)f(ac) = (c\cdot (b\lact f))(a) = \beta^c(b\lact f)(a).
\end{eqnarray*}
Here we used \equref{counitVD2} in the third equation.
\end{proof}

It now makes sense to define what is a convolution inverse in $R=\Hom(A,\MM(A))$. Given $f\in R$, we say that $\bar{f}\in R$ is a \emph{(left-right) convolution inverse} of $f$ in $R$ if  
$$\bar{f} \ast^1 f = \alpha = f \ast_2 \bar{f}.$$

\begin{proposition}
Let $A$ be a multiplier Hopf algebra. Then a linear map $S\in\Hom(A,\MM(A))$ is the antipode of $A$ if and only if $S$ is a (left-right) convolution inverse of $\iota_A$ in $\Hom(A,\MM(A))$.
\end{proposition}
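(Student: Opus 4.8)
The plan is to unwind both sides of the claimed equivalence into explicit identities in the Sweedler-type notation $a_{(1,b)}\ot a_{(2,b)}$ and $a_{(b,1)}\ot a_{(b,2)}$, and then to observe that, after passing between the multiplication of $\MM(A)$ and the actions $\lactd,\ractd$ of $\MM(A)$ on $A$, the two convolution-inverse conditions become verbatim the antipode axioms \equref{antipode2} and \equref{antipode1}. The whole argument rests on the two translation identities coming from \prref{multialg}~(iv): for every $x\in\MM(A)$ and $c\in A$ one has $x\,\iota_A(c)=\iota_A(x\lactd c)$ and $\iota_A(c)\,x=\iota_A(c\ractd x)$, the products being taken in $\MM(A)$.

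First I would treat the left condition $S\ast^1\iota_A=\alpha$. Unwinding the definition of $\ast^b$, for all $a,b\in A$ we have $(S\ast^b\iota_A)(a)=S(a_{(1,b)})\,\iota_A(a_{(2,b)})$, and the first translation identity rewrites this as $\iota_A\big(S(a_{(1,b)})\lactd a_{(2,b)}\big)$. On the other hand $\alpha_b(a)=\widetilde\varepsilon(a)\iota_A(b)=\iota_A(\widetilde\varepsilon(a)b)$. Since $A$ is non-degenerate (being a multiplier Hopf algebra), $\iota_A$ is injective by \prref{multialg}~(iii), so the equation $S\ast^1\iota_A=\alpha$ holds if and only if $S(a_{(1,b)})\lactd a_{(2,b)}=\widetilde\varepsilon(a)b$ for all $a,b$, which is exactly \equref{antipode2}.

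Symmetrically, for the right condition $\iota_A\ast_2 S=\alpha$ I would compute $(\iota_A\ast_b S)(a)=\iota_A(a_{(b,1)})\,S(a_{(b,2)})=\iota_A\big(a_{(b,1)}\ractd S(a_{(b,2)})\big)$ using the second translation identity, and compare with $\alpha_b(a)=\iota_A(\widetilde\varepsilon(a)b)$. Again by injectivity of $\iota_A$ this is equivalent to $a_{(b,1)}\ractd S(a_{(b,2)})=\widetilde\varepsilon(a)b$ for all $a,b$, which is \equref{antipode1} after interchanging the names of $a$ and $b$ (and using that $\widetilde\varepsilon(a)\in k$ is central). Putting the two halves together, $S$ is a left-right convolution inverse of $\iota_A$ precisely when both \equref{antipode2} and \equref{antipode1} hold, that is, precisely when $S$ is the antipode.

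I do not expect a genuine obstacle here: the substance of the statement is simply the recognition that the ``local multiplications'' $\ast^1,\ast_2$ encode the same pairings as the antipode axioms, once one multiplies inside $\MM(A)$ instead of acting on $A$. The only points demanding care are bookkeeping ones, namely keeping the left/right conventions ($\lactd$ versus $\ractd$, and $\ast^1$ versus $\ast_2$) aligned and invoking injectivity of $\iota_A$ to descend equalities in $\iota_A(A)$ to equalities in $A$; both are immediate from the results already established.
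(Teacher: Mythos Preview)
Your proof is correct and follows essentially the same route as the paper's: unwind $\ast^b$ and $\ast_b$, use the identities $x\,\iota_A(c)=\iota_A(x\lactd c)$ and $\iota_A(c)\,x=\iota_A(c\ractd x)$ from \prref{multialg}(iv), and cancel $\iota_A$ by injectivity to recover \equref{antipode2} and \equref{antipode1}. The only cosmetic difference is that the paper explicitly invokes the uniqueness of the antipode in the final sentence, whereas you absorb this into the phrase ``precisely when $S$ is the antipode''.
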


\begin{proof}
A linear map $S:A\to \MM(A)$ is a (left-right) convolution inverse of $\iota_A$ if and only if, for all $a,b\in A$, the following holds:
\begin{eqnarray*}
(S \ast^b \iota_A)(a) &=& S(a_{(1,b)}) \iota_A(a_{(2,b)}) \equal{\equref{AbimMA}} \iota_A(S(a_{(1,b)})\lactd a_{(2,b)})
\end{eqnarray*}
equals $\alpha_b(a) = \iota_A(b)\widetilde\varepsilon(a)$
and
\begin{eqnarray*}
(\iota_A\ast_a S)(b) &=& \iota_A(b_{(a,1)})S(b_{(a,2)}) \equal{\equref{AbimMA}} \iota_A(b_{(a,1)}\ractd S(b_{(a,2)}))
\end{eqnarray*}
is equal to 
$\alpha_a(b) =\iota_A(a)\widetilde\varepsilon(b)$.
Since $\iota_A$ is injective, these formulas are equivalent to \equref{antipode2} and \equref{antipode1}, respectively. Because of the uniqueness of the antipode of a multiplier Hopf algebra, the statement follows.
\end{proof}
 
\subsection{(Co)module algebras over a multiplier bialgebra}

By definition a multiplier bialgebra $A$ is a coalgebra in the monoidal category of non-degenerate idempotent $k$-projective algebras $\ndi_k$.

We define a \emph{right $A$-comodule algebra} as to be a right comodule over the coalgebra $A$ in $\ndi_k$. Thus a right $A$-comodule algebra $B=(B,\rho)$ consists of a non-degenerate idempotent $k$-projective algebra $B$ and a morphism $\rho:B\mor B\ot A$ such that
\begin{eqnarray}\eqlabel{coaction1}
(\rho \ot A)\circ \rho = (B\ot \Delta)\circ \rho,
\end{eqnarray} 
as morphisms from $B$ to $B\ot A \ot A$, and
\begin{eqnarray}\eqlabel{coaction2}
(B \ot \varepsilon)\circ \rho = B,
\end{eqnarray} 
as morphisms from $B$ to $B$.
Making use of the associated algebra map $\widetilde{\rho}:B\to \MM(B\ot A)$, \equref{coaction1} and \equref{coaction2} can be translated into
\begin{eqnarray}\eqlabel{coaction11}
\overline{\rho \ot A}\circ \widetilde{\rho} = \overline{B\ot \Delta}\circ \widetilde{\rho},
\end{eqnarray} 
\begin{eqnarray}\eqlabel{coaction22}
\overline{B \ot \varepsilon}\circ \widetilde{\rho} = \iota_B.
\end{eqnarray} 

\begin{proposition}
A morphism $\rho:B\mor B\ot A$ in $\ndi_k$ is coassociative in the sense of \equref{coaction11} if and only if 
\begin{eqnarray}\eqlabel{coactionfonzie1}
\overline{\rho \ot A}(\widetilde{\rho}(b)(1\ot a))&=& \overline{B \ot \Delta}(\widetilde{\rho}(b))(1\ot 1\ot a),
\end{eqnarray}
for all $a\in A$ and $b\in B$, if and only if
\begin{eqnarray}
(c\ot 1\ot 1)\overline{\rho \ot A}(\widetilde{\rho}(b)(1\ot a))&=& \overline{B \ot \Delta}((c\ot 1)\widetilde{\rho}(b))(1\ot 1\ot a),
\end{eqnarray}
for all $a\in A$ and $b,c\in B$.
\end{proposition}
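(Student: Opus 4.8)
The plan is to prove the three conditions equivalent by establishing the cycle of implications
\equref{coaction11} $\Rightarrow$ \equref{coactionfonzie1} $\Rightarrow$ (third equation) $\Rightarrow$ \equref{coaction11}, in close analogy with the proof of \prref{coass}; only the last implication requires real work. Throughout, the sole tool is \leref{help}, used in two instantiations: once with the morphism $\rho:B\mor B\ot A$ and $C=A$ playing the role of the last tensorand, which via \equref{help1} gives $\ol{\rho\ot A}(x(1\ot a))=\ol{\rho\ot A}(x)(1\ot 1\ot a)$ for $x\in\MM(B\ot A)$ and $a\in A$; and once with the morphism $\Delta:A\mor A\ot A$ and $C=B$ playing the role of the first tensorand, which via \equref{help2} gives $\ol{B\ot\Delta}((c\ot 1)x)=(c\ot 1\ot 1)\ol{B\ot\Delta}(x)$ for $x\in\MM(B\ot A)$ and $c\in B$.

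For \equref{coaction11}~$\Rightarrow$~\equref{coactionfonzie1}, I would start from $\ol{\rho\ot A}(\widetilde\rho(b))=\ol{B\ot\Delta}(\widetilde\rho(b))$, multiply both sides on the right by $1\ot 1\ot a$, and rewrite the left-hand side by \equref{help1} as $\ol{\rho\ot A}(\widetilde\rho(b)(1\ot a))$; this is exactly \equref{coactionfonzie1}. For \equref{coactionfonzie1}~$\Rightarrow$~(third equation), I would multiply \equref{coactionfonzie1} on the left by $c\ot 1\ot 1$ with $c\in B$ and rewrite the right-hand side by \equref{help2} as $\ol{B\ot\Delta}((c\ot 1)\widetilde\rho(b))(1\ot 1\ot a)$. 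Both steps are immediate.

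The main obstacle is the converse (third equation)~$\Rightarrow$~\equref{coaction11}, where I must show that the left (and, symmetrically, the right) multipliers of $\ol{\rho\ot A}(\widetilde\rho(b))$ and $\ol{B\ot\Delta}(\widetilde\rho(b))$ coincide for every $b\in B$. As in \prref{coass}, the key preliminary is a non-degeneracy reduction: writing $\phi_B\colon B\to\Hom(B,B),\ b'\mapsto(c\mapsto cb')$ for the analogue of the map $\phi_M$ of \leref{tensor}, non-degeneracy of $B$ makes $\phi_B$ injective, and since $B$ and $A\ot A$ are $k$-projective, the composite of $\phi_B\ot(A\ot A)$ with $\beta_{B,B,A\ot A}$ of \equref{alphabeta} is injective as well; hence any $x\in B\ot A\ot A$ with $(c\ot 1\ot 1)\lactd x=0$ for all $c\in B$ must vanish. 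It therefore suffices to prove, for all $c\in B$ and $b'\ot a'\ot a''\in B\ot A\ot A$, that $\big((c\ot 1\ot 1)\ol{\rho\ot A}(\widetilde\rho(b))\big)\lactd(b'\ot a'\ot a'')$ equals the corresponding expression with $\ol{B\ot\Delta}$ in place of $\ol{\rho\ot A}$.

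To carry out this computation I would expand $a''=\sum (a'')^1(a'')^2$ by idempotency of $A$, use \equref{help1} to absorb the factor $(a'')^1$ and obtain $\ol{\rho\ot A}(\widetilde\rho(b)(1\ot (a'')^1))$ inside, then invoke the hypothesis (the third equation) with $a=(a'')^1$ to replace the premultiplied term by $\ol{B\ot\Delta}((c\ot 1)\widetilde\rho(b))(1\ot 1\ot (a'')^1)$. Recombining the idempotency sum, so that $(1\ot 1\ot (a'')^1)\lactd(b'\ot a'\ot (a'')^2)$ collapses to $b'\ot a'\ot (a'')^1(a'')^2$ and hence back to $b'\ot a'\ot a''$, and finally applying \equref{help2} to turn $\ol{B\ot\Delta}((c\ot 1)\widetilde\rho(b))$ into $(c\ot 1\ot 1)\ol{B\ot\Delta}(\widetilde\rho(b))$, yields the desired equality; the right-multiplier statement follows by the symmetric manipulation. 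The non-degeneracy reduction then forces $\ol{\rho\ot A}(\widetilde\rho(b))=\ol{B\ot\Delta}(\widetilde\rho(b))$, i.e.\ \equref{coaction11}. The only delicate points are keeping straight which tensorand plays the role of $C$ in each application of \leref{help} and correctly recombining the idempotency decomposition after applying the hypothesis.
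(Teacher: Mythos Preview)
Your proposal is correct and follows essentially the same approach as the paper, which simply states ``Completely analogous to the proof of \prref{coass}.'' Your cycle of implications, the use of \leref{help} in the two indicated instantiations, the non-degeneracy reduction via $\phi_B$ and $\beta_{B,B,A\ot A}$, and the idempotency expansion of $a''$ all mirror the paper's argument for \prref{coass} precisely.
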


\begin{proof}
Completely analogous to the proof of \prref{coass}.
\end{proof}

\begin{proposition}
A morphism $\rho:B\mor B\ot A$ in $\ndi_k$ is counital in the sense of \equref{coaction22} if and only if
\begin{eqnarray}\eqlabel{counitalcoaction}
\overline{B\ot \varepsilon}(\widetilde{\rho}(b)(1\ot a))&=& \widetilde{\varepsilon}(a)\iota_B(b),
\end{eqnarray}
for all $a\in A$ and $b\in B$.
\end{proposition}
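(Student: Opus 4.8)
The plan is to reduce both implications to two basic facts about the map $\overline{B\ot\varepsilon}:\MM(B\ot A)\to\MM(B)$ (where we identify $\MM(B\ot k)=\MM(B)$): first, that it is a \emph{unital algebra homomorphism}, which is guaranteed by \thref{charext} since $B\ot\varepsilon:B\ot A\mor B$ is a morphism in $\ndi_k$; and second, the computation $\overline{B\ot\varepsilon}(1\ot a)=\widetilde\varepsilon(a)1_{\MM(B)}$ for every $a\in A$. Note that the situation differs from \prref{counit}: there the counit acted on a tensor slot \emph{different} from the one carrying the inserted element, so \leref{help} applied directly; here $\varepsilon$ and the element $a$ in $\widetilde\rho(b)(1\ot a)$ sit in the \emph{same} (second) tensorand, so \leref{help} is unavailable and I would exploit multiplicativity instead.

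For the forward implication I would assume \equref{coaction22}, i.e.\ $\overline{B\ot\varepsilon}(\widetilde\rho(b))=\iota_B(b)$, and simply factor the product: since $\overline{B\ot\varepsilon}$ is multiplicative,
\[
\overline{B\ot\varepsilon}(\widetilde\rho(b)(1\ot a))=\overline{B\ot\varepsilon}(\widetilde\rho(b))\,\overline{B\ot\varepsilon}(1\ot a)=\iota_B(b)\,\widetilde\varepsilon(a)=\widetilde\varepsilon(a)\iota_B(b),
\]
the last step because $\widetilde\varepsilon(a)\in k$ is central. This is exactly \equref{counitalcoaction}. For the converse I would run the same factorization backwards: multiplicativity together with $\overline{B\ot\varepsilon}(1\ot a)=\widetilde\varepsilon(a)1_{\MM(B)}$ gives $\widetilde\varepsilon(a)\,\overline{B\ot\varepsilon}(\widetilde\rho(b))=\overline{B\ot\varepsilon}(\widetilde\rho(b)(1\ot a))=\widetilde\varepsilon(a)\iota_B(b)$ for all $a\in A$ and $b\in B$. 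The decisive point is that the extension $\varepsilon:A\mor k$ is idempotent, so $\widetilde\varepsilon:A\to\MM(k)=k$ is surjective; choosing $g\in A$ with $\widetilde\varepsilon(g)=1_k$ and setting $a=g$ yields $\overline{B\ot\varepsilon}(\widetilde\rho(b))=\iota_B(b)$, i.e.\ \equref{coaction22}.

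The one step that requires genuine care — and which I expect to be the main obstacle — is the identity $\overline{B\ot\varepsilon}(1\ot a)=\widetilde\varepsilon(a)1_{\MM(B)}$. Here $1\ot a$ denotes the multiplier $\Psi_{B,A}(1_{\MM(B)}\ot\iota_A(a))$, which is \emph{not} in the image of $\iota_{B\ot A}$, so one cannot merely invoke the relation $\overline{B\ot\varepsilon}\circ\iota_{B\ot A}=\widetilde{B\ot\varepsilon}$ coming from \thref{charext}. Instead I would use the naturality of the canonical maps $\Psi$ (the monoidality of the functor $\MM$), in the form $\overline{B\ot\varepsilon}\circ\Psi_{B,A}=\Psi_{B,k}\circ(\MM(B)\ot\overline\varepsilon)$, applied to $1_{\MM(B)}\ot\iota_A(a)$; since $\overline\varepsilon\circ\iota_A=\widetilde\varepsilon$ and the identification $\MM(B\ot k)\cong\MM(B)$ sends $\Psi_{B,k}(1_{\MM(B)}\ot\widetilde\varepsilon(a))$ to the scalar multiple $\widetilde\varepsilon(a)1_{\MM(B)}$, the claim follows. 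Once this identity is in place, both directions are immediate, and none of the explicit left/right multiplier bookkeeping used in \prref{counit} is needed.
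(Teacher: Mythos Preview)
Your argument is correct and genuinely different from the paper's. The paper (which only writes out the forward direction explicitly) works at the level of left and right multipliers: it takes an arbitrary $b'\in B$, expresses it via the idempotency of $B$ and of the extension $B\ot\varepsilon$ as $b'=\sum\widetilde\varepsilon(g)b'^1b'^2$, and then unwinds the explicit formula for $\overline{B\ot\varepsilon}$ given in \thref{charext} to show that $\overline{B\ot\varepsilon}(\widetilde\rho(b)(1\ot a))\lactd b'=\widetilde\varepsilon(a)\iota_B(b)\lactd b'$. You instead exploit two structural facts: that $\overline{B\ot\varepsilon}$ is a unital \emph{algebra} morphism (the last clause of \thref{charext}), and the identity $\overline{B\ot\varepsilon}(1\ot a)=\widetilde\varepsilon(a)1_{\MM(B)}$, which you obtain from the monoidality of $\MM$. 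Once these are in place, both implications are one-line computations, and the converse is handled cleanly by evaluating at $a=g$ with $\widetilde\varepsilon(g)=1_k$. Your route avoids all elementwise multiplier bookkeeping and treats the two directions symmetrically; the paper's route is more hands-on but stays closer to the explicit definitions and does not rely on the naturality of $\Psi$, which the paper asserts but does not spell out in detail. Either way, the key identity $\overline{B\ot\varepsilon}(1\ot a)=\widetilde\varepsilon(a)1_{\MM(B)}$ can also be checked directly from the defining formulae for $\bar f$ in \thref{charext} together with the multiplicativity of $\widetilde\varepsilon$, so your argument does not hinge on that unproven claim.
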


\begin{proof}
The proof is quite analogous to the one of \prref{counit}. We will proof one direction. 
Suppose that \equref{coaction22} holds. For $b'=\sum \widetilde{\varepsilon}(g)b'^1b'^2\in B=B^2$ we have that
\begin{eqnarray*}
&&\hspace{-2cm}
\overline{B\ot \varepsilon}(\widetilde{\rho}(b)(1\ot a))\lactd b'
= \sum \widetilde{B\ot \varepsilon} \big((\widetilde{\rho}(b)(1\ot a))\lactd (b'^1\ot g)\big)\lactd b'^2\\
&=& \sum \widetilde{B\ot \varepsilon} (\widetilde{\rho}(b)\lactd (b'^1\ot ag))\lactd b'^2= \sum \overline{B\ot \varepsilon}(\widetilde{\rho}(b))\lactd \widetilde{\varepsilon}(a)b'\\
&\equal{\equref{coaction22}}& \iota_B(b)\lactd \widetilde{\varepsilon}(a)b'= \widetilde{\varepsilon}(a)\iota_B(b)\lactd b',
\end{eqnarray*}
hence the left multipliers of both sides of \equref{counitalcoaction} are equal for all $a\in A$ and $b\in B$; the equality of the right multipliers follows in a similar way. Here we used in the third equality that $\widetilde{\varepsilon}(a)b'=\sum \widetilde{\varepsilon}(a)\widetilde{\varepsilon}(g)b'^1b'^2=\sum \widetilde{\varepsilon}(ag)b'^1b'^2$.
\end{proof}

By a \emph{right $A$-module algebra} we mean a non-unital algebra in the monoidal category $\Mm_A$ of non-degenerate idempotent $k$-projective right $A$-modules. That is an algebra $B$ that is at the same time a non-degenerate idempotent $k$-projective right $A$-module, such that the multiplication $\mu_B: B\ot B\to B$ is a right $A$-linear map. The latter means   that 
$$\mu_B(b\ot b')\cdot a= \mu_B((b\ot b')\cdot a) =\mu_B((b\ot b')\ractd \widetilde\Delta(a)),$$
for all $b,b'\in B$ and $a\in A$,
and where we used the right $A$-action on $B\ot B$ as in \leref{compatibleAmod}.

\begin{remark}\relabel{remcoaction}
In \cite{VDZha:Galois} comodule algebras and module algebras were defined over a multiplier Hopf algebra. Just like multiplier Hopf algebras are in particular multiplier bialgebras, (co)module algebras over a multiplier Hopf algebra are particular instances of (co)module algebras over the underlying multiplier bialgebra. Let us make this correspondence a bit more explicit.
Note that \equref{coactionfonzie1} is an equality in $\MM(B\ot A \ot A)$.  
In \cite{VDZha:Galois}, a so-called coassociative coaction of $A$ on $B$ is an algebra map $\widetilde{\rho}:B\to \MM(B\ot A)$ such that
a similar coassociativity condition as \equref{coactionfonzie1} holds, but under the extra assumption that
\begin{eqnarray}
\widetilde{\rho}(b)(1\ot a)&\subseteq& \iota_{B\ot A}(B\ot A),\eqlabel{ff1}\\
(1\ot a)\widetilde{\rho}(b)&\subseteq& \iota_{B\ot A}(B\ot A),\eqlabel{ff2}
\end{eqnarray}
for all $a\in A$ and $b\in B$. These equations \equref{ff1} and \equref{ff2} ``force'' the equality \equref{coactionfonzie1} to be in $\iota_{B\ot A\ot A}(B\ot A \ot A)$, hence obtaining  an equality in $B\ot A\ot A$ (by the injectivity of $\iota_{B\ot A\ot A}$). It is actually this equality that expresses the coassociativity in \cite{VDZha:Galois}. Furthermore, \equref{counitalcoaction} states that the coaction
$\widetilde{\rho}:B\to \MM(B\ot A)$ is counital in the sense of \cite{VDZha:Galois}.

\end{remark}

\subsection*{Acknowledgement}
The authors would like to thank A.\ Van Daele for explaining them the concept of multiplier Hopf algebra during the joint Algebra seminars of the universities of Antwerpen, Brussel, Leuven and Hasselt in 2008.
We thank Matthew Daws for pointing out references \cite{Dau:mult}, \cite{Ho:Cohom} and \cite{Joh:centr}.

JV thanks the Fund for Scientific Research--Flanders (Belgium) (F.W.O-Vlaanderen) for a Postdoctoral Fellowship.

\def\cprime{$'$}

\end{document}